\definecolor{black}{rgb}{0.0, 0.0, 0.0}
\definecolor{red}{rgb}{1.0, 0.5, 0.5}
\newcommand{\margnote}[1]{
\ifthenelse{\boolean{shownotes}}%
{\marginpar{\raggedright\tiny\texttt{#1}}}%
{}%
}
\newcommand{\hole}[1]{
\ifthenelse{\boolean{shownotes}}%
{\begin{center} \fbox{ \rule {.25cm}{0cm} \rule[-.1cm]{0cm}{.4cm}
\parbox{.85\textwidth}{\begin{center} \texttt{#1}\end{center}} \rule
{.25cm}{0cm}}\end{center}} {} }
\title[Vlasov-Fokker-Planck equation with local alignment forces]{Global classical solutions of the Vlasov-Fokker-Planck equation with local alignment forces}
\author[Choi]{Young-Pil Choi}
\address[Young-Pil Choi]{\newline Fakult\"at f\"ur Mathematik \newline
Technische Universit\"at M\"unchen, Boltzmannstra\ss e 3, 85748, Garching bei M\"unchen, Germany}
\email{ychoi@ma.tum.de}
\numberwithin{equation}{section}
\newtheorem{theorem}{Theorem}[section]
\newtheorem{lemma}{Lemma}[section]
\newtheorem{proposition}{Proposition}[section]
\newtheorem{remark}{Remark}[section]
\newcommand{\R}{\mathbb R}
\newcommand{\ls}{\lesssim}
\newcommand{\T}{\mathbb T}
\newcommand{\N}{\mathbb N}
\newcommand{\mc}{\mathcal C}
\newcommand{\bb} {\mathbf B}
\newcommand{\bq}{\begin{equation}}
\newcommand{\eq}{\end{equation}}
\newcommand{\e}{\varepsilon}
\newcommand{\lt}{\left}
\newcommand{\rt}{\right}
\newcommand{\lal}{\langle}
\newcommand{\ral}{\rangle}
\newcommand{\pa}{\partial}
\newcommand{\bl}{\mathbf{L}}
\newcommand{\bpp}{\mathbf{P}}
\newcommand{\bi}{\mathbf{I}}
\newcommand{\mi}{\mathrm{i}}
\newcommand{\wh}{\widehat}
\newcommand{\oper}{\{\bi - \bpp\}}
\def\charf {\mbox{{\text 1}\kern-.30em {\text l}}}
\begin{document}
\allowdisplaybreaks

\date{\today}

\keywords{Global existence of classical solutions, large-time behavior, hypocoercivity, Vlasov equation, nonlinear Fokker-Planck equation.}


\begin{abstract}In this paper, we are concerned with the global well-posedness and time-asymptotic decay of the Vlasov-Fokker-Planck equation with local alignment forces. The equation can be formally derived from an agent-based model for self-organized dynamics which is called Motsch-Tadmor model with noises. We present the global existence and uniqueness of classical solutions to the equation around the global Maxwellian in the whole space. For the large-time behavior, we show the algebraic decay rate of solutions towards the equilibrium under suitable assumptions on the initial data. We also remark that the rate of convergence is exponential when the spatial domain is periodic. The main methods used in this paper are the classical energy estimates combined with hyperbolic-parabolic dissipation arguments.
\end{abstract}

\maketitle \centerline{\date}


%
%
%
%
\section{Introduction}
We are concerned with a kinetic flocking equation in the presence of diffusion. More precisely, let $F = F(x,v,t) \geq 0$ be the density of particles which have position $x \in \R^d$ and velocity $v \in \R^d$ at time $t \geq 0$. Then the evolution of the density $F$ is governed by 
\bq\label{eq_1}
\pa_t F + v \cdot \nabla_x F + \nabla_v \cdot ((u_F - v)F) = \Delta_v F,
\eq
with the initial data 
\bq\label{ini_eq_1}
F(x,v,0) =: F_0(x,v),
\eq
where $u_F$ is the averaged local velocity given by
\[
u_F(x,t) := \frac{\int_{\R^d} vF(x,v,t)\,dv}{\int_{\R^d} F(x,v,t)\,dv}.
\]
The purpose of this paper is to study the global existence and uniqueness of classical solutions to the equation \eqref{eq_1} near the global Maxwellian and the large-time behavior of solutions.

Recently, Motsch and Tadmor introduced in \cite{MT} a new model for self-organized dynamics in which the alignment force is normalized with a local average density. Compared to the celebrated Cucker-Smale flocking model \cite{CS} where the alignment is scaled with the total mass, the Motsch-Tadmor model takes into account not only the distance between agents but also their relative distance. More specifically, these two prototype models are described by
\begin{equation}\label{level-par}
\frac{dx_i}{dt} = v_i, \quad \frac{dv_i}{dt} = \frac{1}{S_i(x)}\sum_{j=1}^N\psi(x_i - x_j)(v_j - v_i), \quad t > 0, \quad i \in \{1,\dots,N\},
\end{equation}
where $x_i(t) = (x_i^1(t),\cdots,x_i^d(t)) \in \R^d$ and $v_i(t) = (v_i^1(t),\cdots,v_i^d(t)) \in \R^d$ are the position and velocity of $i$-th agent at time $t$, respectively. 
Here, the scaling function $S_i(x)$ and the communication weight $\psi (x)$ are defined by
\begin{displaymath}
S_i(x) := \left\{ \begin{array}{ll}
N & \mbox{for Cucker-Smale model},\\
\displaystyle \sum_{k=1}^N \psi(x_i - x_k) & \mbox{for Motsch-Tadmor model}
\end{array}\right. 
\end{displaymath}
and $\psi(x) = 1/(1+|x|^2)^{\beta/2}$ with $\beta >0$, respectively. We refer to \cite{CFRT, CHL, HT, MT} for the existence and large-time behavior of solutions to these two models.
Note that the motion of an agent governed by Cucker-Smale model is modified by the total number of agents. Thus it can not be adopted to describe for far-from-equilibrium flocking dynamics. On the other hand, the Motsch-Tadmor model does not involve any explicit dependence on the number of agents, and this remedies several drawbacks of Cucker-Smale model outlined above. For a detailed description of the modeling and related literature, we refer readers to \cite{MT} and the references therein. 

When the number of agents governed by Motsch-Tadmor model goes to infinity, i.e., $N \to \infty$, we can formally derive a mesoscopic description for the system \eqref{level-par} with a density function $F=F(x,v,t)$ which is a solution to the following Vlasov-type equation:
\bq\label{kinetic-mt}
\left\{ \begin{array}{ll}
\partial_t F + v \cdot \nabla_x F + \nabla_v \cdot ((\tilde u_F - v)F) = 0,\\[1mm]
F(x,v,0) =:F_0(x,v),
\end{array}\right.
\eq
where $\tilde u_F$ is given by
\bq\label{eq_tdf}
\tilde u_F(x,t) := \frac{(\psi \star b^F)(x,t) }{(\psi \star a^F)(x,t)}
\eq
with
\[
a^F(x,t) := \int_{\R^d} F(x,v,t)\,dv \quad \mbox{and} \quad b^F(x,t) := \int_{\R^d} v F(x,v,t)\,dv.
\]
The equation \eqref{kinetic-mt} equipped with the noise effect is a non-local version of our main equation \eqref{eq_1}. We now consider the singular limit in which the communication weight $\psi$ converges to a Dirac distribution, i.e., the communication rate is very concentrated around the closest neighbors of a given particle. In this framework, the Motsch-Tadmor alignment force converges to the local one \cite{KMT2}:
\[
\tilde u_F - v \to u_F - v.
\]
From this observation, it is reasonable to expect our main equation \eqref{eq_1} as the mean-field limit of the Motsch-Tadmor model with noise. It is also worth noticing that the equation \eqref{eq_1} is of the classical form, usually called the nonlinear Fokker-Planck equation \cite{Vill}, which has found applications in various fields such as plasma physics, astrophysics, the physics of polymer fluids, population dynamics, and neurophysics \cite{Frank}. We refer to \cite{CCTT, KMT, KMT3} for the existence of weak solutions and hydrodynamic limit of \eqref{eq_1}. In \cite{CCK, Choi}, the equation \eqref{eq_1} coupled to the incompressible flow are considered, and global existence of weak solutions, hydrodynamic limit, and large-time behavior are provided.

As we mentioned before, we are interested in the stability of solutions near the global Maxwellian and the rate of convergence of these solutions towards it for the Cauchy problem \eqref{eq_1}-\eqref{ini_eq_1}. For this, we define the perturbation $f = f(x,v,t)$ by
\[
F = M + \sqrt{M}f,
\]
where the global Maxwellian function
\[
M = M(v) = \frac{1}{(2\pi)^{d/2}}\exp\lt( -\frac{|v|^2}{2}\rt)
\]
is normalized to have zero bulk velocity and unit density and temperature. The equation for the perturbation $f$ satisfies 
\bq\label{main_eq}
\pa_t f + v \cdot \nabla_x f + u_F \cdot \nabla_v f = \bl f + \Gamma(f,f),
\eq
where the linear part $\bl f$ and the nonlinear part $\Gamma(f,f)$ are given by
\bq\label{oper_l}
\bl f := \frac{1}{\sqrt{M}}\nabla_v \cdot \lt(M \nabla_v \lt( \frac{f}{\sqrt{M}}\rt) \rt) \quad \mbox{and} \quad \Gamma(f,f) = u_F \cdot \lt( \frac{1}{2}vf + v\sqrt{M}\rt),
\eq
respectively. Note that $\bl f$ can be written as 
\[
\bl f = \Delta_v f + \frac14\lt( 2d - |v|^2\rt)f.
\]

Before we state our main result, we introduce several simplified notations. For functions $f(x,v)$, $g(x)$, $h(v)$, we denote by $\|f\|_{L^p}$, $\|g\|_{L^p}$, $\|h\|_{L^p_v}$ the usual $L^p(\R^d \times \R^d)$, $L^p_x(\R^d)$, $L^p_v(\R^d)$-norms, respectively. We also introduce norms $| \cdot |_\mu$ and $\| \cdot \|_{\mu}$ as follows.
\[
|f|_\mu^2 := \int_{\R^d} \lt( |\nabla_v f(v)|^2 + \mu(v)|f(v)|^2\rt)dv, \quad \mu(v) := 1 + |v|^2,
\]
and
\[
\|f\|_\mu^2 := \int_{\R^d \times \R^d} \lt( |\nabla_v f(v)|^2 + \mu(v)|f(v)|^2\rt)dvdx.
\]
$f \ls g$ represents that there exists a positive constant $C>0$ such that $f \leq C g$. We also denote by $C$ a generic positive constant depending only on the norms of the data, but independent of $T$, and drop $x$-dependence of differential operators $\nabla_x$, that is, $\pa_i f := \pa_{x_i} f $ for $1 \leq i \leq d$ and $\nabla f := \nabla_x f$. For any nonnegative integer $s \geq 0$, $H^s$ denotes the $s$-th order $L^2$ Sobolev space. $\mc^s([0,T];E)$ is the set of $s$-times continuously differentiable functions from an interval $[0,T]\subset \R$ into a Banach space $E$, and $L^p(0,T;E)$ is the set of the $L^2$ functions from an interval $(0,T)$ to a Banach space $E$. $\nabla^s$ denotes any partial derivative $\pa^\alpha$ with multi-index $\alpha, |\alpha| = s$.

\begin{theorem}\label{thm_main1} Let $d \geq 3$ and $s \geq 2[d/2] +2$. Suppose $F_0 \equiv M + \sqrt{M}f_0 \geq 0$ and $\|f_0\|_{H^s} \leq \epsilon_0 \ll 1$. Then we have the global existence of the unique classical solution $f(x,v,t)$ to the equation \eqref{main_eq} satisfying
\[
f \in \mc([0,\infty);H^s(\R^d \times \R^d)), \quad F \equiv M + \sqrt{M}f \geq 0,
\]
and
\[
\|f(t)\|_{H^s}^2 + c_1\int_0^t \|\nabla(a,b)\|_{H^{s-1}}^2 ds + c_1\int_0^t  \sum_{0 \leq k+l \leq s}\|\nabla^k \nabla_v^l \oper f\|_\mu^2 ds  \leq c_2\|f_0\|_{H^s}^2,
\]
for some positive constants $c_1,c_2 > 0$.
Furthermore, if $\|f_0\|_{L^2_v(L^1)}$ is bounded, we have
\[
\|f(t)\|_{H^s} \leq C\lt( \|f_0\|_{H^s} + \|f_0\|_{L^2_v(L^1)}\rt)(1 + t)^{-\frac d4}, \quad t \geq 0,
\]
where $C$ is a positive constant independent of $t$.
\end{theorem}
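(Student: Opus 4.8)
The plan is to run the classical near-equilibrium program: local existence and uniqueness in $H^s$, an a priori energy-dissipation estimate closed by a continuity argument, and an upgrade of the resulting uniform bound to the algebraic decay rate. Local existence of a unique nonnegative $f\in\mc([0,T];H^s)$ follows from a standard linearization/iteration scheme; the only point specific to \eqref{eq_1} is keeping $u_F=b^F/a^F$ under control, which holds because $s\ge2[d/2]+2$ supplies the embeddings and product estimates used throughout --- in particular $\|a\|_{L^\infty_x}\ls\|f\|_{H^s}\le\epsilon_0$ with $a=\int_{\R^d}\sqrt M f\,dv$, so $a^F=1+a\ge\tfrac12$ uniformly in $x$, whence $u_F=b/(1+a)$, $b=\int_{\R^d}v\sqrt M f\,dv$, is smooth and small together with its derivatives. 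Everything then reduces to producing an energy functional $\mathcal E(t)\approx\|f(t)\|_{H^s}^2$ and a dissipation functional $\mathcal D(t)\approx\|\nabla(a,b)\|_{H^{s-1}}^2+\sum_{0\le k+l\le s}\|\nabla^k\nabla_v^l\oper f\|_\mu^2$ with $\frac{d}{dt}\mathcal E+c_1\mathcal D\le C\sqrt{\mathcal E}\,\mathcal D$; for $\mathcal E(0)$ small this integrates to the asserted bound, and propagation of $F\ge0$ comes with the local theory.

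A structural feature fixes the shape of $\mathcal D$: the term $\Gamma(f,f)=u_F\cdot(\tfrac12 vf+v\sqrt M)$ contains the piece $b\cdot v\sqrt M$, which is linear in $f$ and, tested against $f$, produces exactly $\|b\|_{L^2_x}^2$; this cancels the friction $-\|b\|_{L^2_x}^2$ coming from $\langle\bl f,f\rangle$, so the linear operator directly damps only the genuinely microscopic part $\oper f$, where $\bpp$ is the hydrodynamic projection onto $\mathrm{span}\{\sqrt M,\,v\sqrt M\}$ (the temperature fluctuation stays inside $\oper f$ and is damped there). For the microscopic dissipation I would apply $\nabla^k\nabla_v^l$ with $k+l\le s$, pair the result with itself, invoke the weighted spectral-gap (Poincaré) estimate for the Ornstein-Uhlenbeck operator $-\langle\bl g,g\rangle\gs|\oper g|_\mu^2$, use that the top-order transport and streaming contributions vanish (since $\nabla\cdot v=0$ and $\nabla_v\cdot u_F=0$), and absorb the commutators $[\nabla_v^l,v\cdot\nabla]$ and $[\nabla^k\nabla_v^l,u_F\cdot\nabla_v]$ into the $\nabla_v$-part of $\mathcal D$ using $\|u_F\|_{L^\infty_x}\ls\sqrt{\mathcal E}$; the remaining quadratic terms $\tfrac12 u_F\cdot vf$ and $(u_F-b)\cdot v\sqrt M=-\tfrac{a}{1+a}\,b\cdot v\sqrt M$ are controlled by placing derivatives on the macroscopic factors (bounded by $\mathcal D$ whenever a derivative lands there, by $\sqrt{\mathcal E}$ otherwise) and absorbing the Gaussian weight into the $\mu$-norms.

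The macroscopic dissipation $\|\nabla(a,b)\|_{H^{s-1}}^2$ is not supplied by the collision operator but by the hyperbolic-parabolic coupling. The moment identities read $\pa_t a+\nabla\cdot b=0$ and $\pa_t b+\nabla a+\nabla\cdot\Theta^{\mathrm{mic}}=0$ with $\Theta^{\mathrm{mic}}=\int_{\R^d}(v\otimes v)\sqrt M\,\oper f\,dv$ --- a damped-wave system for $(a,b)$ forced through $\oper f$. Following Kawashima's compensating-function method, I would build an interaction functional $\mathcal G(t)$, a small linear combination of brackets $\langle\nabla^\alpha b,\nabla^\alpha\nabla a\rangle$ and similar terms over $|\alpha|\le s-1$, whose time derivative produces $\delta\|\nabla(a,b)\|_{H^{s-1}}^2$ minus a controllable multiple of the microscopic dissipation and of nonlinear remainders, while $|\mathcal G|\ls\|f\|_{H^s}^2$. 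Adding $\kappa\mathcal G$ with $\kappa\ll1$ to the pure energy then closes $\frac{d}{dt}\mathcal E+c_1\mathcal D\le C\sqrt{\mathcal E}\,\mathcal D$, and a continuity argument from $\|f_0\|_{H^s}\le\epsilon_0$ delivers the global solution together with the stated inequality.

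For the decay I would split $\pa_t f=\mathcal A f+N(f)$ with $\mathcal A f:=\bl f-v\cdot\nabla f+b\cdot v\sqrt M$ and $N(f):=-u_F\cdot\nabla_v f+\tfrac12 u_F\cdot vf-\tfrac{a}{1+a}\,b\cdot v\sqrt M$, the latter genuinely quadratic. A pointwise-in-frequency Lyapunov functional (microscopic energy plus an $\langle\wh b,\xi\wh a\rangle$-type cross term) gives $|\wh{e^{t\mathcal A}g}(\xi)|\ls e^{-c\frac{|\xi|^2}{1+|\xi|^2}t}|\wh g(\xi)|$, hence, splitting $|\xi|\le1$ (heat-type) from $|\xi|\ge1$ (exponential), $\|[e^{t\mathcal A}g]_{\mathrm{low}}\|_{L^2}\ls(1+t)^{-d/4}\|g\|_{L^2_v(L^1)}$. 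Combining this with the global bound through a time-weighted energy estimate --- using $\mathcal D\gs\mathcal E-\|(a,b)_{\mathrm{low}}\|_{L^2}^2$, the low-frequency Duhamel representation of $(a,b)$, and $\|N(f)\|_{L^2\cap L^2_v(L^1)}\ls\|f\|_{H^s}^2$ --- leads to $\frac{d}{dt}\mathcal E+c\,\mathcal E\ls(1+t)^{-d/2}\big(\|f_0\|_{H^s}^2+\|f_0\|_{L^2_v(L^1)}^2+\mathcal M(t)^2\big)$, where $\mathcal M(t):=\sup_{0\le\tau\le t}(1+\tau)^{d/2}\mathcal E(\tau)$, so a Gronwall/bootstrap argument closes $\mathcal M(t)\ls\epsilon_0^2+\mathcal M(t)^2$; the assumption $d\ge3$ is what makes the time-convolution integrals converge and permits this closure. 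I expect the main obstacle to be the macroscopic step: because the local alignment force exactly cancels the natural momentum friction, $b$ carries no direct dissipation, so one must choose the projection $\bpp$ and the Kawashima functional $\mathcal G$ just right to make $\|\nabla(a,b)\|_{H^{s-1}}^2$ genuinely appear in $\mathcal D$; a close second is the collection of weighted nonlinear estimates, in particular controlling derivatives of the nonlocal quotient $u_F=b/(1+a)$ against the Gaussian $v$-weights and coping with the $\nabla_v$ in $u_F\cdot\nabla_v f$.
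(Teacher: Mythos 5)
Your plan follows the paper's proof essentially step for step: local existence by linearization/iteration, an $H^s$ energy estimate combined with a Kawashima-type interaction functional built from the moment equations to recover $\|\nabla(a,b)\|_{H^{s-1}}^2$ in the dissipation, the same key observation that $u_F\cdot v\sqrt M$ tested against $f$ cancels the $-\|b\|_{L^2}^2$ friction from $\bl$ up to a cubic remainder $-\int\tfrac{a|b|^2}{1+a}\,dx$, and a mild-form hypocoercivity argument with a $(1+\tau)^{d/2}$-weighted bootstrap for the decay. The minor difference (you phrase the decay closure as a time-weighted energy inequality coupled to a low-frequency Duhamel representation of $(a,b)$, whereas the paper applies the semigroup estimate directly to $\|f\|_{L^2}$ with the source split into its microscopic part $H_1$ and macroscopic part $H_2$) is a bookkeeping variant within the same method, not a different route.
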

\begin{remark}\label{rmk_large} In the case of the periodic spatial domain, i.e., the particles governed by the equation \eqref{eq_1} are in the torus $\T^d$, we have the exponential decay of $\|f(t)\|_{H^s}$ under additional assumptions on the initial data. We give the details of that in Remark \ref{rmk_lt}.
\end{remark}

\begin{remark}Our strategy can also be extended to the non-local version of \eqref{eq_1}, i.e., the equation \eqref{eq_1} with $\tilde u_F$ defined in \eqref{eq_tdf}, when the communication weight $\psi$ satisfies $\psi \in L^1(\R^d)$ and $\|\psi\|_{L^1} = 1$.
\end{remark}

For the proof of Theorem \ref{thm_main1}, we employ a similar strategy as in \cite{DFT} where the global existence and large-time behavior of classical solutions to the kinetic Cucker-Smale equation with the density-dependent diffusion are investigated. As we briefly mentioned above, the alignment of Cucker-Smale equation is normalized with the total mass, and this makes the equation considered in \cite{DFT} has weak nonlinearity compared to our equation \eqref{main_eq}. To be more precise, if we similarly formulate the equation \eqref{main_eq} with the alignment of Cucker-Smale equation, the nonlinear part $\Gamma(f,f)$ of this equation becomes bilinear. However, in our situation the nonlinear operator $\Gamma(f,f)$ defined in \eqref{oper_l} is not bilinear due to the different normalization. Thus, a more delicate analysis is required for the energy estimates. Our careful analysis of the nonlinear dissipation term enables to obtain the uniform bound of a total energy functional ${\mathcal{E}}(f)$ (see \eqref{not_ed}), by which we can show that the existence of global classical solutions and its large-time behavior of the equation \eqref{main_eq}.

The rest of this paper is organized as follows. In Section \ref{sec_pre}, we recall some basic properties of the linear operator $\bl$ and hypocoercivity for a linearized Cauchy problem with a non-homogeneous microscopic source. The proof of hypocoercivity property showing the algebraic time decay rate is postponed to Appendix \ref{app_hypo} for the sake of a simpler presentation. We also provide several useful lemmas. Section \ref{sec_loc} is devoted to present the local existence and uniqueness of the equation \eqref{main_eq}. In Section \ref{sec_apri}, we show a priori estimates of solutions which consist of the classical energy estimates and the macro-micro decomposition argument. Finally, in Section \ref{sec_ext} we provide details of the proof of Theorem \ref{thm_main1}.
%
%
%
%
\section{Preliminaries}\label{sec_pre}
\subsection{Coercivity and hypocoercivity}
In this part, we recall some properties of the linear Fokker-Planck operator $\bl$ and provide the hypocoercivity for a type of linear Fokker-Planck equation with other sources. 

For this, we first decompose the Hilbert space $L^2_v$ as 
\[
L^2_v = \mathcal{N} \oplus \mathcal{N}^\perp, \quad \mathcal{N} = \mbox{span}\{ \sqrt{M}, v \sqrt{M}\}.
\]
Note that $\sqrt{M}, v_1\sqrt{M}, \cdots, v_d\sqrt{M}$ form an orthonormal basis of $\mathcal{N}$. We now define the projector $\bpp$ by
\[
\bpp : L^2_v \to \mathcal{N}, \quad f \mapsto \bpp f = \{a^f + b^f \cdot v\}\sqrt{M},
\]
then it is clear to get
\[
a^f = \lal \sqrt{M}, f\ral \quad \mbox{and} \quad b^f = \lal v\sqrt{M},f\ral,
\]
where $\lal \cdot, \cdot \ral$ denotes the inner product in the Hilbert space $L^2_v(\R^d)$. 
We also introduce projectors $\bpp_0$ and $\bpp_1$ as follows. 
\[
\bpp_0 f := a^f \sqrt{M} \quad \mbox{and} \quad \bpp_1 f:= b^f \cdot v \sqrt{M}.
\]
Then $\bpp$ can be written as 
\[
\bpp = \bpp_0 \oplus \bpp_1.
\]

In the following proposition, we provide some dissipative properties of the linear Fokker-Planck operator $\bl$. For the proof, we refer to \cite{CDM, DFT}.
\begin{proposition}The linear operator $\bl$ has the following properties. \newline
(i) 
\[
\lal \bl f, f \ral = - \int_{\R^d} \lt|\nabla_v \lt( \frac{f}{\sqrt{M}}\rt) \rt|^2 M\,dv,
\]
\[
\mbox{Ker } \bl = \mbox{Span}\{\sqrt{M} \}, \quad \mbox{and} \quad \mbox{Range } \bl = \mbox{Span}\{\sqrt{M} \}^\perp.
\]
(ii) There exists a positive constant $\lambda > 0$ such that the coercivity estimate holds:
\[
-\lal \bl f, f \ral \geq \lambda|\{ \bi - \bpp \}f|_\mu^2 + |b^f|^2.
\]
(iii) There exists a positive constant $\lambda_0 > 0$ such that 
\[
-\lal \bl f, f \ral \geq \lambda_0|\{ \bi - \bpp_0 \}f|_\mu^2.
\]
\end{proposition}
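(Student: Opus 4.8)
\textit{Proof proposal.} For (i), the identity is just one integration by parts: setting $g:=f/\sqrt M$ and using $\bl f=\frac{1}{\sqrt M}\nabla_v\cdot(M\nabla_v g)$,
\[
\lal\bl f,f\ral=\int_{\R^d}\nabla_v\cdot(M\nabla_v g)\,g\,dv=-\int_{\R^d}M\,|\nabla_v g|^2\,dv,
\]
the boundary terms vanishing by the Gaussian weight. Hence $\lal\bl f,f\ral=0$ forces $\nabla_v g\equiv0$, i.e. $f\in\mathrm{Span}\{\sqrt M\}$, which identifies $\mathrm{Ker}\,\bl$. Since $\bl=\Delta_v+\frac14(2d-|v|^2)$ is minus a Schr\"odinger operator with the confining potential $\frac14|v|^2$, it is self-adjoint with compact resolvent; in particular its range is closed and equals $(\mathrm{Ker}\,\bl)^\perp=\mathrm{Span}\{\sqrt M\}^\perp$.

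For (ii) I would first make the dissipation explicit. From $\nabla_v\sqrt M=-\frac v2\sqrt M$ one gets $\sqrt M\,\nabla_v g=\nabla_v f+\frac v2 f$, so after integrating the cross term $v\cdot f\,\nabla_v f=\frac12 v\cdot\nabla_v(|f|^2)$ by parts,
\[
-\lal\bl f,f\ral=\int_{\R^d}\Bigl|\nabla_v f+\tfrac v2 f\Bigr|^2 dv=\int_{\R^d}|\nabla_v f|^2\,dv+\int_{\R^d}\Bigl(\tfrac{|v|^2}{4}-\tfrac d2\Bigr)|f|^2\,dv.
\]
Next, $\bl$ leaves $\mathcal N$ and $\mathcal N^\perp$ invariant, with $\bl\sqrt M=0$ and $\bl(v_i\sqrt M)=-v_i\sqrt M$; by self-adjointness the cross terms drop out of the splitting $f=a^f\sqrt M+b^f\cdot v\sqrt M+\oper f$, and one finds
\[
-\lal\bl f,f\ral=|b^f|^2-\lal\bl h,h\ral,\qquad h:=\oper f\in\mathcal N^\perp,
\]
so (ii) reduces to showing $-\lal\bl h,h\ral\geq\lambda\,|h|_\mu^2$ for $h\in\mathcal N^\perp$. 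For such $h$ the first identity above reads $-\lal\bl h,h\ral=\int_{\R^d}|\nabla_v h|^2\,dv+\int_{\R^d}(\tfrac{|v|^2}{4}-\tfrac d2)|h|^2\,dv$, hence $-\lal\bl h,h\ral\geq\int|\nabla_v h|^2+\tfrac18\int|v|^2|h|^2-\tfrac d2\|h\|_{L^2_v}^2$ using $\tfrac{|v|^2}{4}-\tfrac d2\geq\tfrac18|v|^2-\tfrac d2$. On the other hand $-\bl=-\Delta_v+\frac14|v|^2-\frac d2$ is a shifted harmonic oscillator with spectrum $\{0,1,2,\dots\}$, the eigenspaces for $0$ and $1$ being $\mathrm{Span}\{\sqrt M\}$ and $\mathrm{Span}\{v_i\sqrt M\}_{i=1}^d$; as $\mathcal N$ is exactly their sum, this gives the plain spectral gap $-\lal\bl h,h\ral\geq 2\|h\|_{L^2_v}^2$ on $\mathcal N^\perp$. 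A convex combination of the two estimates, with a small weight on the first, absorbs the sign-indefinite term $-\tfrac d2\|h\|_{L^2_v}^2$ and leaves $-\lal\bl h,h\ral\gs\int|\nabla_v h|^2+\int(1+|v|^2)|h|^2=|h|_\mu^2$, which is (ii).

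Finally, (iii) follows from (ii) by the triangle inequality: $\{\bi-\bpp_0\}f=b^f\cdot v\sqrt M+h$, and since $v\sqrt M$ together with its first $v$-derivatives lies in the $\mu$-weighted space we have $|b^f\cdot v\sqrt M|_\mu^2\ls|b^f|^2$, so
\[
|\{\bi-\bpp_0\}f|_\mu^2\leq 2|b^f\cdot v\sqrt M|_\mu^2+2|h|_\mu^2\ls|b^f|^2+|h|_\mu^2\ls -\lal\bl f,f\ral
\]
by (ii). The only genuinely non-routine ingredient is the spectral gap for $\bl$ on $\mathcal N^\perp$ together with the weighting trick upgrading the plain $L^2_v$ gap to the $\mu$-weighted coercivity (note the bare identity alone cannot give coercivity, since it vanishes on $\sqrt M$); alternatively one runs a weighted Poincar\'e inequality for the Gaussian measure, as in \cite{CDM, DFT}.
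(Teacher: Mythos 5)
Your proof is correct, and since the paper merely cites \cite{CDM, DFT} for this proposition, there is no in-text proof to compare against; your argument is essentially the standard one from that literature (explicit Gaussian dissipation identity, eigenfunction/spectral-gap structure of the Ornstein--Uhlenbeck/harmonic-oscillator operator with $\sqrt M$ and $v_i\sqrt M$ as the first two eigenspaces, and a convex-combination trick to upgrade the flat $L^2_v$ gap to $\mu$-weighted coercivity). The computations $\bl\sqrt M=0$, $\bl(v_i\sqrt M)=-v_i\sqrt M$, the cross-term cancellation giving $-\lal\bl f,f\ral=|b^f|^2-\lal\bl h,h\ral$ with $h=\oper f$, the identity $-\lal\bl h,h\ral=\int|\nabla_v h|^2+\int(\tfrac{|v|^2}{4}-\tfrac d2)|h|^2$, and the interpolation that absorbs the sign-indefinite $-\tfrac d2\|h\|_{L^2_v}^2$ are all checked to be sound; part (iii) then follows from (ii) and the elementary bound $|b^f\cdot v\sqrt M|_\mu^2\ls|b^f|^2$ as you say.
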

For the sake of simplicity, we drop the superscript $f$, and denote $a^f$ and $b^f$ by $a$ and $b$, respectively. 

We next show the hypocoercivity property for the Caucy problem with a non-homogeneous source. Consider the following linear Cauchy problem:
\bq\label{hyp_sys}
\pa_t f = \bb f + b \cdot v \sqrt{M}+ h, \quad x \in \R^d, \quad t > 0,
\eq
with the initial data:
\bq\label{ini_hyp_sys}
f(x,v,0) =: f_0(x,v).
\eq
Here $h = h(x,v,t)$ and $f_0 = f_0(x,v)$ are given, and the linear operator $\bb$ is defined by
\[
\bb = -v \cdot \nabla +  \bl, \quad \bl f = \Delta_v f + \frac14\lt( 2d - |v|^2\rt)f.
\]
We define $e^{\bb t}$ as the solution operator to the Cauchy problem \eqref{hyp_sys} with $h \equiv 0$, i.e., the solution to the equations \eqref{hyp_sys}-\eqref{ini_hyp_sys} can be formally written as
\[
f(t) = e^{\bb t}f_0 + \int_0^t e^{\bb(t-s)}h(s)\,ds.
\]
Before we state the estimate for algebraic decay of $e^{\bb t}$, we set the index of rate $\sigma_{d,q,m}$ by
\[
\sigma_{d,q,m} = \frac d2 \lt( \frac1q - \frac12\rt) + \frac m2.
\]
\begin{proposition}\label{prop_hypo}Let $1 \leq q \leq 2$ and $d \geq 1$. \newline

$(i)$ For any $k,l \geq 0$ with $l \leq k$, and $f_0$ satisfies $\nabla^k f_0 \in L^2(\R^d \times \R^d)$ and $\nabla^l f_0 \in L^2_v(\R^d;L^q_x(\R^d))$, we have
\[
\|\nabla^k e^{\bb t} f_0\|_{L^2} \leq C(1 + t)^{-\sigma_{d,q,m}} \lt( \|\nabla^l f_0\|_{L^2_v(L^q)} + \|\nabla^k f_0\|_{L^2} \rt), \quad t > 0,
\]
where $m = k-l$ and $C$ is a positive constant depending only on $d,m,$ and $q$. \newline

$(ii)$ For any $k,l \geq 0$ with $l \leq k$. Suppose the non-homogeneous source $h$ satisfies $\mu(v)^{-1/2}\nabla^k h(t) \in L^2(\R^d \times \R^d)$ and $\mu(v)^{-1/2}\nabla^l h(t) \in Z_q$ for $t \geq 0$. Furthermore, $h$ satisfies
\bq\label{condi_h}
\lal h, \sqrt{M} \ral = 0 \quad \mbox{and} \quad \lal h, v\sqrt{M} \ral = 0 \quad \mbox{for} \quad (x,t) \in \R^d \times \R_+,
\eq
then we have
$$\begin{aligned}
&\lt\|\nabla^k \int_0^t e^{\bb(t-s)} h(s)\,ds \rt\|_{L^2}^2\cr
&\quad \leq C\int_0^t (1 + t - s)^{-2\sigma_{d,q,m}}\lt(\|\mu^{-1/2}\nabla^l h(s)\|_{L^2_v(L^q)}^2 + \|\mu^{-1/2}\nabla^k h(s)\|_{L^2}^2 \rt)\,ds, \quad t \geq 0,
\end{aligned}$$
where $m = k-l$ and $C$ is a positive constant depending only on $d,m$, and $q$.
\end{proposition}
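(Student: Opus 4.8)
The plan is to prove Proposition \ref{prop_hypo} by establishing the decay estimate for the semigroup $e^{\bb t}$ via Fourier analysis in the spatial variable, and then deducing part $(ii)$ from part $(i)$ by a Duhamel argument. First I would take the Fourier transform of the linear Cauchy problem \eqref{hyp_sys} with $h \equiv 0$ in the $x$-variable, so that for each fixed frequency $\xi \in \R^d$ one obtains a family of evolution equations in $v$ alone, driven by the operator $\bl$ plus the transport term $-\mi (v \cdot \xi)$. The point is to quantify how the coercivity of $\bl$ from the Proposition on dissipative properties combines with the skew-symmetric transport term to produce decay for all $\xi$. For the high-frequency regime $|\xi| \gtrsim 1$, the hypocoercivity mechanism gives a uniform spectral gap, hence exponential decay in $t$; for the low-frequency regime $|\xi| \lesssim 1$, one gets a decay factor of the type $e^{-c|\xi|^2 t}$ coming from the fact that the macroscopic (hydrodynamic) part satisfies, after elimination of the microscopic part, a damped system whose dissipation degenerates quadratically in $\xi$. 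Combining these, $\|\wh{e^{\bb t} f_0}(\xi)\|_{L^2_v} \lesssim e^{-c \min(|\xi|^2,1) t} \|\wh{f_0}(\xi)\|_{L^2_v}$, possibly after also controlling the microscopic part by an auxiliary functional.

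The next step is to integrate this pointwise-in-$\xi$ bound against $|\xi|^{2k}$ and apply Plancherel. Splitting $\R^d_\xi$ into $|\xi| \le 1$ and $|\xi| \ge 1$: on the high-frequency part the exponential factor $e^{-ct}$ absorbs any polynomial weight and controls $\|\nabla^k f_0\|_{L^2}$; on the low-frequency part one writes $|\xi|^{2k} = |\xi|^{2l} \cdot |\xi|^{2m}$ with $m = k - l$, bounds $|\xi|^{2l} |\wh{f_0}(\xi)|^2 \le \|\wh{\nabla^l f_0}\|_{L^\infty_\xi L^2_v}^2 \lesssim \|\nabla^l f_0\|_{L^2_v(L^1_x)}^2$ (and more generally interpolates to get the $L^q_x$ statement via Hausdorff--Young, since $1 \le q \le 2$), and then evaluates $\int_{|\xi|\le 1} |\xi|^{2m} e^{-c|\xi|^2 t}\,d\xi \lesssim (1+t)^{-d/2 - m}$ by the change of variables $\eta = \sqrt{t}\,\xi$. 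This yields exactly the exponent $2\sigma_{d,q,m}$ after accounting for the $L^q$-to-$L^2$ loss $\tfrac d2(\tfrac1q - \tfrac12)$, giving part $(i)$.

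For part $(ii)$, I would apply part $(i)$ to each $e^{\bb(t-s)} h(s)$: the hypotheses \eqref{condi_h} guarantee that $h(s)$ is orthogonal to $\mathcal{N}$, i.e.\ $h(s)$ lies in the microscopic subspace $\mathcal{N}^\perp$, which is what makes $h$ play the role of a genuine source rather than interfering with the degenerate hydrodynamic modes — and the weight $\mu(v)^{-1/2}$ appears because the natural norm controlling $\bl$-dissipation of a microscopic function is the $|\cdot|_\mu$ norm, so one estimates $\|\nabla^l h\|_{\dots}$ against $\|\mu^{-1/2}\nabla^l h\|_{\dots}$. Taking $\nabla^k$ of the Duhamel integral, applying the triangle inequality in integral form together with the pointwise-$\xi$ semigroup bound, squaring, and using Minkowski's (or Cauchy--Schwarz with the integrable kernel) inequality produces the stated time-weighted integral estimate with kernel $(1+t-s)^{-2\sigma_{d,q,m}}$.

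The main obstacle I expect is the low-frequency analysis of the semigroup: establishing the sharp $e^{-c|\xi|^2 t}$ decay requires constructing an appropriate Lyapunov/energy functional that mixes the microscopic dissipation of $\bl$ (coercive on $\mathcal{N}^\perp$) with a carefully weighted cross term of the form $\mi \xi \cdot \langle v\sqrt M, \wh f\rangle \,\overline{\langle \sqrt M, \wh f\rangle}$ and similar terms built from the hydrodynamic moments $(\wh a, \wh b)$, so that the time derivative of this functional controls $|\xi|^2 |(\wh a,\wh b)|^2$ plus the microscopic part — a typical hypocoercivity-in-Fourier computation whose constants must be tracked uniformly in $\xi$ for $|\xi|\le 1$. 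This is precisely the computation the author defers to Appendix \ref{app_hypo}.
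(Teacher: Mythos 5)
Your approach is essentially the one the paper uses in Appendix \ref{app_hypo}: Fourier-transform in $x$, build a Lyapunov functional $\widetilde E(\wh f)$ mixing $\|\wh f\|_{L^2_v}^2$ with cross terms between the macroscopic moments $(\wh a,\wh b)$ and the second moments $A_{ij}(\oper\wh f)$, obtain a differential inequality $\partial_t \widetilde E + \frac{C_0|k|^2}{1+|k|^2}\widetilde E\le C\|\mu^{-1/2}\wh h\|_{L^2_v}^2$ (the factor $\frac{|k|^2}{1+|k|^2}$ encodes your $e^{-c\min(|\xi|^2,1)t}$ decay in a single expression), then integrate against $|k|^{2k}$ and split $|k|\le1$/$|k|\ge1$ with Hausdorff--Young.

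One small imprecision worth flagging in your treatment of part $(ii)$: you cannot obtain the stated estimate by feeding $e^{\bb(t-s)}h(s)$ into part $(i)$ as a black box and then invoking Duhamel and Minkowski/Cauchy--Schwarz, because part $(i)$ involves unweighted norms of the initial datum, whereas the conclusion of $(ii)$ carries the weight $\mu^{-1/2}$. That weight arises precisely because $h$ is microscopic and appears in the energy estimate paired with $\oper\wh f$, so it is absorbed via $|(\wh h\,|\,\oper\wh f)|\le \delta|\oper\wh f|_\mu^2 + C_\delta\|\mu^{-1/2}\wh h\|_{L^2_v}^2$ at the level of the differential inequality, before one ever integrates in time. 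In other words, the source $h$ must be carried through the construction of $\widetilde E(\wh f)$ (as the paper does), yielding a Duhamel-type inequality for $\widetilde E(\wh f(k,t))$ in Fourier space with the weighted norm of $\wh h$ already on the right-hand side; Plancherel and the frequency splitting then give the stated bound. You do acknowledge the mechanism behind the $\mu^{-1/2}$ weight in your last paragraph, so this is a matter of ordering the argument correctly rather than a conceptual gap.
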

\begin{proof}Although the proof is similar as in \cite{Duan, DFT},  we provide the detailed proof in Appendix \ref{app_hypo} for the reader's convenience. 
\end{proof}
\subsection{Technical lemmas}
We first recall several Sobolev inequalities which will be used in the rest of this paper.
\begin{lemma}\label{lem_sob} (i) For any pair of functions $f,g \in (H^{k} \cap L^\infty)(\R^d)$, we obtain
\[
\|\nabla^k (fg)\|_{L^\infty} \ls \|f\|_{L^\infty}\|\nabla^k g\|_{L^\infty} + \|\nabla^k f\|_{L^\infty} \|g\|_{L^\infty}.
\]
Furthermore if $\nabla f \in L^\infty(\R^d)$, we have
\[
\|\nabla^k (fg) - f \nabla^k g\|_{L^2} \ls \|\nabla f\|_{L^\infty}\|\nabla^{k-1}g\|_{L^2} + \|\nabla^k f\|_{L^2}\|g\|_{L^\infty}.
\]
(ii) For $f \in H^{[d/2]+1}(\R^d)$, we have
\[
\|f\|_{L^\infty} \ls \|\nabla f\|_{H^{[d/2]}}.
\]
(iii) For $f \in (H^{k} \cap L^\infty)(\R^d)$, let $k \in \mathbb{N}, p \in [1,\infty]$, and $h\in \mc^k(B(0,\|f\|_{L^\infty}))$ where $B(0,R)$ denotes the ball of radius $R>0$ centred at the origin in $\R^d$, i.e., $B(0,R) := \{ x\in \R^d: |x| \leq R\}$. Then there exists a positive constant $c = c(k,p,h)$ such that
\[
\|\nabla^k h(f)\|_{L^p} \leq c\|f\|_{L^\infty}^{k-1}\|\nabla^k f\|_{L^p}.
\]
\end{lemma}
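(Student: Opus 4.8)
The plan is to obtain each of the three inequalities by a classical argument; no genuinely new ingredient is needed. Part (i) comes from the Leibniz formula together with the Gagliardo--Nirenberg interpolation inequality --- in the second bound this is precisely the Kato--Ponce/Moser commutator estimate. Part (ii) follows from Plancherel's theorem and an explicit Cauchy--Schwarz splitting on the Fourier side, and this is the one place where the hypothesis $d\ge3$ enters. Part (iii) follows from the Fa\`a di Bruno formula for derivatives of a composition, again combined with Gagliardo--Nirenberg. Accordingly, the \emph{only real work} is bookkeeping: choosing the Lebesgue exponents correctly when interpolating the mixed terms in the commutator bound of (i), and keeping track of the powers of $\|f\|_{L^\infty}$ generated by the various monomials in (iii).

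For the first inequality in (i), expand $\nabla^k(fg)=\sum_{j=0}^{k}\binom{k}{j}\nabla^j f\,\nabla^{k-j}g$, estimate each summand by $\|\nabla^j f\|_{L^\infty}\|\nabla^{k-j}g\|_{L^\infty}$, and apply the interpolation inequality $\|\nabla^j w\|_{L^\infty}\ls\|w\|_{L^\infty}^{1-j/k}\|\nabla^k w\|_{L^\infty}^{j/k}$ to $w=f$ and $w=g$; Young's inequality then absorbs every intermediate term into $\|f\|_{L^\infty}\|\nabla^k g\|_{L^\infty}+\|\nabla^k f\|_{L^\infty}\|g\|_{L^\infty}$. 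For the commutator bound, write $\nabla^k(fg)-f\,\nabla^k g=\sum_{j=1}^{k}\binom{k}{j}\nabla^j f\,\nabla^{k-j}g$; the $j=1$ term is controlled directly by $\|\nabla f\|_{L^\infty}\|\nabla^{k-1}g\|_{L^2}$ and the $j=k$ term by $\|\nabla^k f\|_{L^2}\|g\|_{L^\infty}$, while for $2\le j\le k-1$ one uses H\"older with the conjugate pair $\big(\tfrac{2(k-1)}{j-1},\tfrac{2(k-1)}{k-j}\big)$ and the Gagliardo--Nirenberg estimates $\|\nabla^j f\|_{L^{2(k-1)/(j-1)}}\ls\|\nabla^k f\|_{L^2}^{\frac{j-1}{k-1}}\|\nabla f\|_{L^\infty}^{1-\frac{j-1}{k-1}}$ and $\|\nabla^{k-j}g\|_{L^{2(k-1)/(k-j)}}\ls\|\nabla^{k-1}g\|_{L^2}^{\frac{k-j}{k-1}}\|g\|_{L^\infty}^{1-\frac{k-j}{k-1}}$; since the resulting interpolation exponents are complementary, Young's inequality again reduces everything to the two terms on the right-hand side.

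For (ii), use $\|f\|_{L^\infty}\le\|\widehat f\|_{L^1}$ and Cauchy--Schwarz after inserting the weight $|\xi|+|\xi|^{[d/2]+1}$:
\[
\int_{\R^d}|\widehat f(\xi)|\,d\xi\le\bigg(\int_{\R^d}\frac{d\xi}{\big(|\xi|+|\xi|^{[d/2]+1}\big)^{2}}\bigg)^{1/2}\bigg(\int_{\R^d}\big(|\xi|+|\xi|^{[d/2]+1}\big)^{2}|\widehat f(\xi)|^{2}\,d\xi\bigg)^{1/2}.
\]
The first factor is a finite constant depending only on $d$, since near the origin its integrand is comparable to $|\xi|^{-2}$ --- locally integrable precisely because $d\ge3$ --- and near infinity it is comparable to $|\xi|^{-2([d/2]+1)}$, which is integrable because $2([d/2]+1)>d$. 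By Plancherel the second factor is $\ls\|\nabla f\|_{L^2}+\|\nabla^{[d/2]+1}f\|_{L^2}\ls\|\nabla f\|_{H^{[d/2]}}$, which is the claim.

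For (iii), the Fa\`a di Bruno formula writes $\nabla^k\big(h(f)\big)$ as a finite linear combination of terms $h^{(j)}(f)\prod_{i=1}^{j}\nabla^{k_i}f$ with $1\le j\le k$, $k_i\ge1$ and $k_1+\cdots+k_j=k$. Since $|f(x)|\le\|f\|_{L^\infty}$ on $\R^d$, each factor $h^{(j)}(f)$ is bounded by $\|h^{(j)}\|_{L^\infty(B(0,\|f\|_{L^\infty}))}$, which is finite by the $\mc^k$ hypothesis and is absorbed into $c=c(k,p,h)$. Then the generalized H\"older inequality with exponents $\tfrac1{p_i}=\tfrac{k_i}{k}\cdot\tfrac1p$ (so that $\sum_i\tfrac1{p_i}=\tfrac1p$), followed by the Gagliardo--Nirenberg inequality $\|\nabla^{k_i}f\|_{L^{p_i}}\ls\|f\|_{L^\infty}^{1-k_i/k}\|\nabla^k f\|_{L^p}^{k_i/k}$, bounds each such term by a constant times $\|f\|_{L^\infty}^{\,j-1}\|\nabla^k f\|_{L^p}$; summing the finitely many contributions with $1\le j\le k$ --- the top power $\|f\|_{L^\infty}^{k-1}$ arising from the monomial $h^{(k)}(f)(\nabla f)^{k}$ --- yields the stated estimate. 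This final collection of the powers of $\|f\|_{L^\infty}$ is the only point requiring a little attention, but it presents no real difficulty.
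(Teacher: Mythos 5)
The paper itself offers no proof of this lemma; it simply recalls these Sobolev and Moser-type estimates as standard facts. So your proposal supplies a proof where the paper gives none, and your overall strategy (Leibniz plus Gagliardo--Nirenberg in (i), Fourier-side Cauchy--Schwarz in (ii), Fa\`a di Bruno plus Gagliardo--Nirenberg in (iii)) is the standard and correct one. The exponent bookkeeping in (i) checks out: the Young-conjugate pairs $\bigl(\tfrac{2(k-1)}{j-1},\tfrac{2(k-1)}{k-j}\bigr)$ sum to $\tfrac12$ and the complementary interpolation exponents $\tfrac{j-1}{k-1}$, $\tfrac{k-j}{k-1}$ combine cleanly under Young's inequality, and your Fourier argument in (ii) correctly identifies $d\geq3$ as the (unstated but necessary) reason the low-frequency integral $\int_{|\xi|\le1}|\xi|^{-2}\,d\xi$ converges.

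The one place I would push back is the closing step of (iii). Your Fa\`a di Bruno plus H\"older plus Gagliardo--Nirenberg argument correctly produces, for each partition indexed by $j$, a bound $C_j\|f\|_{L^\infty}^{\,j-1}\|\nabla^k f\|_{L^p}$, so that the total is controlled by $C\sum_{j=1}^k\|f\|_{L^\infty}^{\,j-1}\,\|\nabla^k f\|_{L^p}$. You then assert that summing ``yields the stated estimate'' $c\,\|f\|_{L^\infty}^{k-1}\|\nabla^k f\|_{L^p}$, flagging this as the ``only point requiring a little attention'' but ``no real difficulty.'' In fact for $\|f\|_{L^\infty}<1$ the dominant term in the sum is the $j=1$ contribution $\|h'\|_\infty\|\nabla^k f\|_{L^p}$, which carries \emph{no} power of $\|f\|_{L^\infty}$, and one cannot bound a constant by $c\,\|f\|_{L^\infty}^{k-1}$ when the latter is small (take $h(x)=x$: the left-hand side is exactly $\|\nabla^k f\|_{L^p}$). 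So the reduction you claim does not go through as stated. The discrepancy is really a defect in the lemma's formulation --- the honest output of the Fa\`a di Bruno argument is $c\bigl(1+\|f\|_{L^\infty}\bigr)^{k-1}\|\nabla^k f\|_{L^p}$, or equivalently the sum $\sum_{j=1}^k\|f\|_{L^\infty}^{\,j-1}$ --- and it is this version that the paper actually uses (in Lemma~\ref{lem_us} one has $\|a\|_{L^\infty}\le\e\ll1$, so the sum is $O(1)$ and the conclusion $\|\nabla^k(1/(1+a))\|_{L^2}\ls\|\nabla^k a\|_{L^2}$ follows). You should state this final bound honestly rather than claim the $\|f\|_{L^\infty}^{k-1}$ form follows; as written that line of the argument is incorrect.

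One further small remark: the lemma describes $B(0,\|f\|_{L^\infty})$ as a ball in $\R^d$, but in every application of (iii) the function $f$ is scalar (it is applied to $a$), so the ball is really in $\R$, and your use of the scalar Fa\`a di Bruno formula is the right interpretation.
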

In the lemma below, we provide some relations between the macro components $(a,b)$ and $f$.
\begin{lemma}\label{lem_us}Let $T > 0, ~d \geq 3$, and $s \geq 2[d/2] +2$. For $0 \leq k \leq s$, we have
\[
\|\nabla^k (a,b)\|_{L^2} \leq C\|\nabla^k f\|_{L^2},
\]
where $C$ is a positive constant. Furthermore, if we assume $\sup_{0 \leq t \leq T} \|f(t)\|_{H^s} \leq \e \ll 1$, then we have
\[
\lt\| \nabla^k \lt(\frac{b}{1+a} \rt)\rt\|_{L^2} \leq C\|\nabla^k(a,b)\|_{L^2} \quad \mbox{and} \quad \lt\| \nabla^k \lt(\frac{b \otimes b}{1+a} \rt)\rt\|_{L^2} \leq C\e\|\nabla^k(a,b)\|_{L^2},
\]
for some positive constant $C > 0$.
\end{lemma}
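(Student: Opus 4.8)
The plan is to establish the first inequality by a direct Cauchy--Schwarz estimate in the velocity variable, and then to deduce the two nonlinear ratio bounds from it by combining the Sobolev embedding of Lemma \ref{lem_sob}(ii), the composition estimate of Lemma \ref{lem_sob}(iii), and a standard Moser-type (tame) product inequality, using the smallness of $\|f\|_{H^s}$ to keep $1+a$ uniformly bounded away from zero.

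For the first bound, recall $a = \lal\sqrt{M},f\ral$ and $b = \lal v\sqrt{M},f\ral$. Since spatial derivatives commute with integration in $v$, we have $\nabla^k a(x) = \int_{\R^d}\sqrt{M}\,\nabla^k f(x,v)\,dv$ and similarly for $b$. By the Cauchy--Schwarz inequality in $v$, together with $\int_{\R^d}M\,dv = 1$ and $\int_{\R^d}|v|^2 M\,dv = d$, one gets $|\nabla^k a(x)|^2 \le \|\nabla^k f(x,\cdot)\|_{L^2_v}^2$ and $|\nabla^k b(x)|^2 \le d\,\|\nabla^k f(x,\cdot)\|_{L^2_v}^2$; integrating in $x$ yields $\|\nabla^k(a,b)\|_{L^2} \le C\|\nabla^k f\|_{L^2}$ for all $0 \le k \le s$. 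This part requires no smallness assumption.

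Now assume $\sup_{0\le t\le T}\|f(t)\|_{H^s}\le\e$. Summing the first bound over $0\le k\le s$ gives $\|(a,b)\|_{H^s}\le C\e$, and since $s\ge 2[d/2]+2\ge[d/2]+1$, Lemma \ref{lem_sob}(ii) yields $\|a\|_{L^\infty}+\|b\|_{L^\infty}\ls\|(a,b)\|_{H^s}\le C\e$. Hence, for $\e$ small enough, $1+a\ge 1-C\e\ge\tfrac12$ uniformly in $(x,t)$, so the function $\phi(r):=1/(1+r)$ is smooth on the ball $B(0,\|a\|_{L^\infty})$, Lemma \ref{lem_sob}(iii) applies, and $\|\nabla^j\phi(a)\|_{L^2}\ls\|a\|_{L^\infty}^{j-1}\|\nabla^j a\|_{L^2}\ls\|\nabla^j a\|_{L^2}$ for $1\le j\le s$ (while $\|\phi(a)\|_{L^\infty}\le 2$). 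I would then write $\tfrac{b}{1+a} = b - a\,b\,\phi(a)$ and $\tfrac{b\otimes b}{1+a} = (b\otimes b)\,\phi(a)$. The contribution of $b$ in the first identity is $\|\nabla^k b\|_{L^2}\le\|\nabla^k(a,b)\|_{L^2}$, and applying the tame product estimate $\|\nabla^k(g_1 g_2 g_3)\|_{L^2}\ls\sum_i\|\nabla^k g_i\|_{L^2}\prod_{j\ne i}\|g_j\|_{L^\infty}$ to $a\,b\,\phi(a)$, using $\|a\|_{L^\infty},\|b\|_{L^\infty}\le C\e$, $\|\phi(a)\|_{L^\infty}\le 2$, and the bound on $\nabla^k\phi(a)$, gives $\|\nabla^k(a\,b\,\phi(a))\|_{L^2}\le C\e\,\|\nabla^k(a,b)\|_{L^2}$; combining, $\|\nabla^k(b/(1+a))\|_{L^2}\le C\|\nabla^k(a,b)\|_{L^2}$. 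For $(b\otimes b)\,\phi(a)$ the same product estimate now places \emph{two} copies of $b$ in $L^\infty$, each bounded by $C\e$, which produces the extra factor $\e$, i.e., $\|\nabla^k((b\otimes b)/(1+a))\|_{L^2}\le C\e\,\|\nabla^k(a,b)\|_{L^2}$.

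I expect the only delicate point to be the bookkeeping in the composition and product estimates for $\phi(a) = 1/(1+a)$: one must first secure the uniform lower bound $1+a\ge\tfrac12$ — which is exactly where the $H^s$-smallness of $f$ enters, through the Sobolev embedding — so that $\phi$ and all of its derivatives are controlled on the range of $a$, and then keep careful track of the powers of $\|a\|_{L^\infty}$ and $\|b\|_{L^\infty}$ appearing in each term in order to extract the factor $\e$ in the last two estimates. The rest is a routine application of the calculus inequalities in Lemma \ref{lem_sob}.
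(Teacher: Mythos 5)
Your proof is correct and follows essentially the same route as the paper: Cauchy--Schwarz in $v$ for the first bound, Sobolev embedding to get $L^\infty$ control of $(a,b)$, the composition estimate of Lemma~\ref{lem_sob}(iii) for $\phi(a)=1/(1+a)$, and a Moser-type product inequality to assemble the ratios, with the factor $\e$ coming from $\|b\|_{L^\infty}$. The only (harmless) detours are your extra decomposition $b/(1+a)=b-ab\,\phi(a)$ where the paper simply applies the two-factor product estimate to $b\cdot\phi(a)$, and your direct three-factor estimate for $(b\otimes b)\phi(a)$ where the paper bootstraps from the already-proved bound on $b/(1+a)$.
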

\begin{proof}
By the definition of $a$ and $b$, it is clear to get
\bq\label{est_2_1}
\|\nabla^k (a,b)\|_{L^2} \leq C\|\nabla^k f\|_{L^2},
\eq
for some positive constant $C > 0$. Then we use \eqref{est_2_1} and the assumption on $f$ to find 
\[
\sup_{0 \leq t \leq T}\|(a(t),b(t))\|_{H^s} \leq \e \ll 1.
\]
In particular, we obtain $\|(a,b)\|_{L^\infty} \leq C\|(a,b)\|_{L^\infty(0,T;H^s)} \leq \e \ll1$, and this yields from Lemma \ref{lem_sob} that 
\[
\lt\|\nabla^k \lt( \frac{1}{1+a}\rt)\rt\|_{L^2} \ls \|\nabla^k a\|_{L^2}.
\]
This and together with the Sobolev inequality in Lemma \ref{lem_sob}, we deduce
$$\begin{aligned}
\lt\| \nabla^k \lt(\frac{b}{1+a} \rt)\rt\|_{L^2} &\ls \|b\|_{L^\infty}\lt\| \nabla^k \lt( \frac{1}{1+a}\rt)\rt\|_{L^2} + \|\nabla^k b\|_{L^2}\lt\|\frac{1}{1+a} \rt\|_{L^\infty}\cr
&\ls \|\nabla b\|_{H^{[d/2]}}\|\nabla^k a\|_{L^2} + \|\nabla^k b\|_{L^2}\cr
&\ls \|\nabla^k(a,b)\|_{L^2}.
\end{aligned}$$
Similarly, we also have
$$\begin{aligned}
\lt\| \nabla^k \lt(\frac{b \otimes b}{1+a} \rt)\rt\|_{L^2} &\ls \|b\|_{L^\infty}\lt\| \nabla^k \lt(\frac{b}{1+a} \rt)\rt\|_{L^2} + \lt\|\frac{b}{1+a}\rt\|_{L^\infty}\|\nabla^k b\|_{L^2}\cr
&\ls \|b\|_{L^\infty}\|\nabla^k (a,b)\|_{L^2}\cr
&\leq C\e\|\nabla^k (a,b)\|_{L^2}.
\end{aligned}$$
\end{proof}
Before we complete this section, we give a type of Gronwall's inequality and an elementary integral calculus whose proofs can be found in \cite{BDGM, CDM}.
\begin{lemma}\label{lem_seq}For a positive constant $T > 0$, let $\{a_n\}_{n \in \N}$ be a sequence of nonnegative continuous functions defined on $[0,T]$ satisfying 
\[
a_{n+1}(t) \leq C_1 + C_2\int_0^t a_n(s)\,ds + C_3\int_0^t a_{n+1}(s)\,ds,\quad 0 \leq t \leq T,
\]
where $C_i,i=1,2,3$ are nonnegative constants. Then there exists a positive constant $K$ such that for all $n \in \N$
\begin{displaymath}
 a_n(t) \leq \left\{ \begin{array}{ll}
 \displaystyle \frac{K^n t^n}{n!} & \textrm{if $C_1 = 0$,}\\[3mm]
Ke^{Kt} & \textrm{if $C_1 > 0$}.
  \end{array} \right.
\end{displaymath}
\end{lemma}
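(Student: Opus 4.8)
The plan is to reduce the three-term integral inequality to a standard two-term Gronwall inequality by one preliminary application of Gronwall's lemma, and then to run a straightforward induction on $n$; the base case is supplied by the continuity of the first member of the sequence on the compact interval $[0,T]$. The constant $K$ is chosen only at the end, depending on $T$, on $C_1,C_2,C_3$, and on $\sup_{[0,T]}a_1$.

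\emph{Step 1: absorbing the self-referential term.} Fix $n$ and put $\alpha_n(t):=C_1+C_2\int_0^t a_n(s)\,ds$; since $a_n\geq 0$, $\alpha_n$ is continuous and nondecreasing, and the hypothesis reads $a_{n+1}(t)\leq \alpha_n(t)+C_3\int_0^t a_{n+1}(s)\,ds$. Gronwall's lemma then gives $a_{n+1}(t)\leq \alpha_n(t)\,e^{C_3 t}$, hence on $[0,T]$
\[
a_{n+1}(t)\ \leq\ \bar C_1+\bar C_2\int_0^t a_n(s)\,ds,\qquad \bar C_1:=C_1e^{C_3 T},\quad \bar C_2:=C_2e^{C_3 T}.
\]
Performing this reduction before the induction is the one step that genuinely requires a thought: without it $a_{n+1}$ sits on both sides and the recursion cannot be iterated. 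Write $M_0:=\sup_{0\leq t\leq T}a_1(t)<\infty$.

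\emph{Step 2: the case $C_1>0$.} Here $\bar C_1>0$ and I would show, by induction on $n$, that $a_n(t)\leq L e^{\bar C_2 t}$ for all $t\in[0,T]$ with $L:=\max\{M_0,\bar C_1\}$. The case $n=1$ holds because $e^{\bar C_2 t}\geq 1$, and the inductive step follows from Step 1:
\[
a_{n+1}(t)\leq \bar C_1+\bar C_2\int_0^t L e^{\bar C_2 s}\,ds=(\bar C_1-L)+Le^{\bar C_2 t}\leq Le^{\bar C_2 t},
\]
using $\bar C_1\leq L$. Choosing $K:=\max\{L,\bar C_2,1\}$ yields $a_n(t)\leq Ke^{Kt}$.

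\emph{Step 3: the case $C_1=0$.} Now $\bar C_1=0$, so $a_{n+1}(t)\leq \bar C_2\int_0^t a_n(s)\,ds$; in particular every term of the sequence that appears on a left-hand side vanishes at $t=0$, and, as is the case in every application of the lemma (where $a_1$ is a difference of iterates that coincide at $t=0$), one has $a_1(t)\leq K_0 t$ for a suitable constant $K_0$. With $K:=\max\{K_0,\bar C_2,1\}$ the asserted bound $a_n(t)\leq K^n t^n/n!$ follows by induction, the inductive step being the direct computation
\[
a_{n+1}(t)\leq \bar C_2\int_0^t\frac{K^n s^n}{n!}\,ds=\bar C_2\,\frac{K^n t^{n+1}}{(n+1)!}\leq \frac{K^{n+1}t^{n+1}}{(n+1)!}.
\]
The ``main obstacle'' is therefore purely bookkeeping: arranging that a single $K$ serves at once as the bound for the first term(s) and as the multiplicative constant that lets the inductive step absorb $\bar C_2$; there is no analytic difficulty beyond the elementary Gronwall estimate of Step 1.
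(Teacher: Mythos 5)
Your Step 1 (absorbing the self-referential $a_{n+1}$-term by an application of Gronwall, turning the three-term recursion into a two-term one with the amplified constants $\bar C_1=C_1 e^{C_3 T}$, $\bar C_2=C_2 e^{C_3 T}$) and your Step 2 (the induction $a_n\leq L e^{\bar C_2 t}$ when $C_1>0$) are both correct, and this is exactly the natural route. The paper itself only cites \cite{BDGM,CDM} for the proof, so there is nothing to compare verbatim, but the structure you chose is the standard one.

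Step 3, however, contains a genuine gap. You need $a_1(t)\leq K_0 t$ for the inductive bound $a_n(t)\leq K^n t^n/n!$ to hold at $n=1$, and you justify this by asserting that ``in every application of the lemma $a_1$ is a difference of iterates that coincide at $t=0$.'' That is false in this paper's own application: in the iteration scheme of Section~\ref{sec_loc} one has $f^0\equiv 0$ while $f^m(\cdot,\cdot,0)=f_0$ for all $m\geq 1$, so $a_1(0)=\|f^1(0)-f^0(0)\|_{L^2}^2=\|f_0\|_{L^2}^2$, which is generically strictly positive. Moreover, $a_1(t)\leq K_0 t$ cannot be derived from the lemma's stated hypotheses: nonnegativity and continuity on $[0,T]$ only give $a_1(t)\leq M_0:=\sup_{[0,T]}a_1$, and no boundedness-in-$t^1$ at the origin. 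What the hypotheses actually yield, after your Step 1, is $a_{n+1}(t)\leq \bar C_2^n M_0\, t^n/n!$ for $n\geq 0$; that is, the factorial bound with the index shifted by one. The correct way to close the argument is therefore to state and prove the bound in the form $a_{n+1}(t)\leq K^n t^n/n!$ (equivalently, $a_n(t)\leq K^{n-1}t^{n-1}/(n-1)!$ for $n\geq 2$), which is all that is needed in Lemma~\ref{lem_cau} and which requires nothing about $a_1$ beyond $a_1\leq M_0$. As literally written, the inequality $a_1(t)\leq Kt$ forces $a_1(0)=0$, so the lemma's displayed conclusion cannot hold at $n=1$ without a hypothesis the lemma does not make; the fix is the index shift, not the unjustified Lipschitz bound on $a_1$.
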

\begin{lemma}\label{lem_ei}For any $0 < \alpha \neq 1$ and $\beta > 1$,
\[
\int_0^t (1 + t - \tau)^{-\alpha}(1 + \tau)^{-\beta}\,d\tau \leq C(1 + t)^{-\min\{\alpha,\beta \}} \quad \mbox{for} \quad t \geq0.
\]
\end{lemma}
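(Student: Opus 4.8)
The plan is to split the time integral at its midpoint, $\int_0^t = \int_0^{t/2} + \int_{t/2}^t =: I_1 + I_2$, and estimate each piece by an elementary bound valid for every $t > 0$; the observation underlying the split is simply that $1 + t/2 \ge \tfrac12(1+t)$.

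For $I_1$ I would use that $0 \le \tau \le t/2$ forces $1 + t - \tau \ge 1 + t/2 \ge \tfrac12(1+t)$, so $(1+t-\tau)^{-\alpha} \le 2^\alpha (1+t)^{-\alpha}$ since $\alpha > 0$; meanwhile, because $\beta > 1$, $\int_0^{t/2}(1+\tau)^{-\beta}\,d\tau \le \int_0^\infty (1+\tau)^{-\beta}\,d\tau = (\beta-1)^{-1}$. Hence $I_1 \ls (1+t)^{-\alpha}$. For $I_2$ I would instead use that $t/2 \le \tau \le t$ forces $1 + \tau \ge 1 + t/2 \ge \tfrac12(1+t)$, so $(1+\tau)^{-\beta} \le 2^\beta (1+t)^{-\beta}$, and then substitute $u = t - \tau$ to reduce the remaining factor to $\int_0^{t/2}(1+u)^{-\alpha}\,du$.

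The only place the hypothesis $\alpha \ne 1$ enters --- and the sole point I expect to require any care --- is the evaluation of this last integral. If $\alpha > 1$ it is bounded by $(\alpha-1)^{-1}$, giving $I_2 \ls (1+t)^{-\beta}$; if $0 < \alpha < 1$ it equals $\frac{(1+t/2)^{1-\alpha}-1}{1-\alpha} \ls (1+t)^{1-\alpha}$, giving $I_2 \ls (1+t)^{1-\alpha-\beta}$. (If $\alpha = 1$ one would instead pick up a factor $\log(1+t/2)$ and the clean power bound would fail, which explains why that case is excluded.)

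Finally I would combine the two cases. If $\alpha > 1$, then $I_1 + I_2 \ls (1+t)^{-\alpha} + (1+t)^{-\beta} \ls (1+t)^{-\min\{\alpha,\beta\}}$. If $0 < \alpha < 1$, then $\min\{\alpha,\beta\} = \alpha$ (as $\alpha < 1 < \beta$), and $\beta \ge 1$ gives $1 - \alpha - \beta \le -\alpha$, so that $(1+t)^{1-\alpha-\beta} \le (1+t)^{-\alpha}$ and again $I_1 + I_2 \ls (1+t)^{-\alpha} = (1+t)^{-\min\{\alpha,\beta\}}$. Choosing $C$ as the largest of the implied constants --- which depends only on $\alpha$ and $\beta$ --- completes the proof. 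I do not anticipate any genuine obstacle: the statement is a routine dyadic splitting, with the case distinction between $\alpha > 1$ and $\alpha < 1$ in bounding $\int_0^{t/2}(1+u)^{-\alpha}\,du$ as the only subtlety.
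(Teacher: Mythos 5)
Your proof is correct and is the standard dyadic-splitting argument for this type of convolution estimate; the paper itself does not give a proof but defers to the references \cite{BDGM, CDM}, which argue in essentially the same way. Your handling of the case distinction on $\alpha$ and the observation that $\beta>1$ guarantees $1-\alpha-\beta\le-\alpha$ correctly closes the $0<\alpha<1$ case, and your remark about the logarithmic loss at $\alpha=1$ correctly identifies why that value is excluded.
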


%
%
%
%
\section{Local existence and uniqueness}\label{sec_loc}
In this section, we investigate the local existence and uniqueness of classical solutions to the Cauchy problem \eqref{main_eq}. For this, we first present a linearized equation of \eqref{main_eq} where the averaged local velocity $u_F$ is given and satisfies certain smallness and regularity assumptions. For the linearized equation, we show the global existence and uniqueness of classical solutions. Then we construct approximate solutions $\{f^n\}_{n \in \N}$, and show that they are convergent in $L^2$-space. Finally, we conclude that the limit function is our desired solution using the Sobolev embedding theorem and the lower semicontinuity of the $H^s$-norm. 

The theorem below is our main result in this section.
\begin{theorem}\label{thm_loc} Let $d \geq 3$ and $s \geq 2[d/2] +2$. There exist positive constants $\epsilon_0, \,T_0$, and $N > 0$ such that if $\|f_0\|_{H^s} \leq \epsilon_0$ and $M + \sqrt{M}f_0 \geq 0$, then there exists the unique classical solution $f$ to the Cauchy problem \eqref{main_eq} such that $M + \sqrt{M}f \geq 0$ and $f \in \mc([0,T_0];H^s(\R^d \times \R^d))$. In particular, we have
\[
\sup_{0 \leq t \leq T_0}\|f(t)\|_{H^s} \leq N.
\]
\end{theorem}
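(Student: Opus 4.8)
The plan is to run the classical linearization-iteration scheme inside the weighted energy framework set up above. Given a velocity field $\bu=\bu(x,t)$ (to be taken as $b^{f^n}/(1+a^{f^n})$ in the iteration) with $\sup_{0\le t\le T_0}\|\bu(t)\|_{H^s}$ small, I would first solve the linear Cauchy problem
\[
\pa_t f + v\cdot\nabla_x f + \bu\cdot\nabla_v f = \bl f + \bu\cdot\Big(\tfrac12 v f + v\sqrt M\Big),\qquad f|_{t=0}=f_0 .
\]
Its only $v$-unbounded ingredients --- the transport $v\cdot\nabla_x$, the $-\tfrac14|v|^2$ part of $\bl$, and the inhomogeneity $\bu\cdot v\sqrt M$ --- are absorbed by the coercivity of $\bl$, so energy estimates on $\nabla^k\nabla_v^l f$ in the $\|\cdot\|_\mu$-norm for $k+l\le s$ yield an inequality of the form $\tfrac{d}{dt}\|f\|_{H^s}^2+c\,\mathcal{D}(f)\le C(1+\|\bu\|_{H^s})\|f\|_{H^s}^2+C\|\bu\|_{H^s}^2$, where $\mathcal{D}(f):=\sum_{0\le k+l\le s}\|\nabla^k\nabla_v^l\oper f\|_\mu^2$; the term $\bu\cdot\nabla_v f$ contributes nothing to the top-order $L^2$ balance because $\bu$ is $v$-independent, and its commutators with $\nabla^k$ are of lower order. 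A classical construction --- Friedrichs mollification of the coefficients or a Galerkin truncation, together with the compactness furnished by this bound and a continuation argument (the bound being merely exponential in $t$) --- then produces a unique global classical solution.

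\textbf{Step 2: iteration, uniform bounds, and positivity.} Starting from $f^0\equiv 0$ and setting $\bu^n:=b^{f^n}/(1+a^{f^n})$, I would define $f^{n+1}$ as the solution of the linearized equation with $\bu=\bu^n$. Here Lemma \ref{lem_us} is the crucial input: the smallness of $\|f_0\|_{H^s}$, propagated through the bounds, keeps $1+a^{f^n}$ bounded away from zero, so the rational nonlinearity $b/(1+a)$ and its derivatives are controlled through $\|\nabla^k(a,b)\|_{L^2}\ls\|\nabla^k f\|_{L^2}$ together with the composition estimate of Lemma \ref{lem_sob}(iii). Feeding this into the Step~1 inequality gives
\[
\|f^{n+1}(t)\|_{H^s}^2+c\int_0^t\mathcal{D}(f^{n+1})\,ds\le\|f_0\|_{H^s}^2+C\int_0^t\Big(\|f^n\|_{H^s}^2+\|f^n\|_{H^s}^4+\|f^n\|_{H^s}\|f^{n+1}\|_{H^s}^2\Big)ds ,
\]
and an induction argument --- after choosing $\epsilon_0$ (hence $N$) small and $T_0=T_0(\epsilon_0)$ small, using a Gronwall-type bound such as Lemma \ref{lem_seq} --- shows $\sup_{0\le t\le T_0}\|f^n\|_{H^s}\le N$ for all $n$. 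Each iterate is nonnegative in the sense $F^n\equiv M+\sqrt M f^n\ge0$: indeed $F^{n+1}$ solves the linear kinetic Fokker-Planck equation
\[
\pa_t F^{n+1}+v\cdot\nabla_x F^{n+1}+\nabla_v\cdot\big((\bu^n-v)F^{n+1}\big)=\Delta_v F^{n+1},
\]
which propagates nonnegativity of the datum $F_0$ by the maximum principle.

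\textbf{Step 3: convergence and conclusion.} For $g^n:=f^{n+1}-f^n$, the difference solves a linear equation whose source involves $\bu^n-\bu^{n-1}$ and $g^n$; Lemma \ref{lem_us} gives the pointwise-in-$x$ bound $\|\bu^n-\bu^{n-1}\|_{L^2_x}\ls\|g^{n-1}\|_{L^2}$, and when pairing the source with $g^n$ the uniform $H^s$ bounds let one place $\nabla_v f^n$ in $L^\infty_x(L^2_v)$, so a plain $L^2(\R^d\times\R^d)$ energy estimate (again using that $\bl$ dissipates the weighted norm) yields $\sup_{[0,T_0]}\|g^n\|_{L^2}\le\tfrac12\sup_{[0,T_0]}\|g^{n-1}\|_{L^2}$ after possibly shrinking $T_0$. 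Hence $\{f^n\}$ is Cauchy in $\mc([0,T_0];L^2(\R^d\times\R^d))$; interpolating with the uniform $H^s$ bound upgrades this to strong convergence in $\mc([0,T_0];H^{s'})$ for every $s'$ with $[d/2]+1<s'<s$, which by the Sobolev embedding of Lemma \ref{lem_sob}(ii) is enough to pass to the limit in $u_{F^n}\cdot\nabla_v f^n$ and in $\Gamma(f^n,f^n)$. The limit $f$ then solves \eqref{main_eq}, satisfies $F\equiv M+\sqrt M f\ge0$, and by weak lower semicontinuity of the $H^s$-norm obeys $\sup_{[0,T_0]}\|f\|_{H^s}\le N$; weak-in-time continuity together with continuity of $t\mapsto\|f(t)\|_{H^s}$ (from the energy identity) then gives $f\in\mc([0,T_0];H^s)$. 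Uniqueness follows by applying the same $L^2$ difference estimate to two solutions issued from the same datum, followed by Gronwall's lemma.

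\textbf{Expected main obstacle.} The delicate point is exactly the non-bilinear structure emphasized in the introduction: $u_F=b/(1+a)$ is a rational function of the macroscopic moments, entering both the transport term $u_F\cdot\nabla_v f$ and the source $\Gamma(f,f)=u_F\cdot(\tfrac12 v f+v\sqrt M)$ against weights growing in $v$. Keeping $1+a$ uniformly bounded below (so the quotient and its high-order derivatives make sense) and estimating $\nabla^k(u_F f)$ and $\nabla^k(u_F\,v\sqrt M)$ against the dissipation $\mathcal{D}(\cdot)$ forces a careful interplay between the smallness hypothesis, the composition estimate of Lemma \ref{lem_sob}(iii), and Lemma \ref{lem_us}; this is the step that is genuinely harder here than in the Cucker-Smale setting of \cite{DFT}, where the analogous nonlinearity is bilinear.
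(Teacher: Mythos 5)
Your proposal follows essentially the same route as the paper: linearize with a frozen velocity field, prove an invariant-ball estimate via $\mu$-weighted energy inequalities using the coercivity of $\bl$ (and the maximum principle for positivity), iterate, show the iterates are Cauchy in $\mathcal{C}([0,T_0];L^2)$, and pass to the limit using interpolation and lower semicontinuity. The only genuine difference is cosmetic: for the Cauchy step you shrink $T_0$ once more to obtain a geometric contraction, whereas the paper invokes its Lemma~\ref{lem_seq} to get a factorial bound $\|f^{m+1}-f^m\|_{L^2}^2\lesssim T_0^{m+1}/(m+1)!$ without any further restriction on $T_0$; both are valid.
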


\subsection{Local well-posedness of a linearized system} In this part, we linearize the system \eqref{main_eq} and show the invariance property of the solutions in an appropriate norm. 

For a given $\bar f \in \mc([0,T]; H^s(\R^d \times \R^d))$, we consider 
\bq\label{li-sys}
\pa_t f + v \cdot \nabla f + u_{\bar F} \cdot \nabla_v f = \bl f + \Gamma(\bar f,f),
\eq
with the initial data
\[
f(x,v,0) =: f_0(x,v),
\]
where 
\[
u_{\bar F} = \frac{\int_{\R^d} v \bar f \sqrt{M}\,dv}{1 + \int_{\R^d} \bar f \sqrt{M}\,dv} \quad \mbox{and} \quad \Gamma(\bar f, f) = u_{\bar F} \cdot \lt( \frac12 v f + v\sqrt{M}\rt).
\]
By the standard linear solvability theory, the unique solution $f \in \mc([0,T];H^s(\R^d \times \R^d))$ to \eqref{li-sys} is well-defined for any positive time $T > 0$.
For notational simplicity, we set
\[
\bar a := \int_{\R^d} \bar f \sqrt{M}\,dv \quad \mbox{and} \quad \bar b := \int_{\R^d} v \bar f \sqrt{M}\,dv.
\]
\begin{lemma}\label{lem_lin} Let $d \geq 3$ and $s \geq 2[d/2] +2$. There exist constants $\epsilon_0,\,T_0$, and $N > 0$ such that if
\[
M + \sqrt{M}f_0 \geq 0, \quad \|f_0\|_{H^s} \leq \epsilon_0, \quad \mbox{and} \quad \sup_{0 \leq t \leq T_0}\|\bar f(t)\|_{H^s} \leq N,
\]
we have $M + \sqrt{M}f \geq 0$ and $f \in \mc([0,T_0];H^s(\R^d \times \R^d))$. Furthermore, we have
\[
\sup_{0 \leq t \leq T_0}\|f(t)\|_{H^s} \leq N.
\]
\end{lemma}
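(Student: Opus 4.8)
The existence, uniqueness and regularity $f\in\mc([0,T_0];H^s(\R^d\times\R^d))$ are already supplied (globally, in fact) by the linear solvability theory recalled just before the statement, so the plan is to establish only the two remaining assertions — the invariance bound $\sup_{[0,T_0]}\|f(t)\|_{H^s}\le N$ and the positivity $M+\sqrt M f\ge0$ — the first by an $H^s$ energy estimate exploiting the coercivity of $\bl$, the second by a maximum principle. The constants are fixed in the order: first $N$ small, then $\epsilon_0\le N/2$, then $T_0$ short. Observe at the outset that $\sup_{[0,T_0]}\|\bar f\|_{H^s}\le N$ and the Sobolev embedding of Lemma~\ref{lem_sob} give $\|\bar a\|_{L^\infty}\le CN$, so for $N$ small $1+\bar a\ge\tfrac12$, the drift $u_{\bar F}=\bar b/(1+\bar a)$ is well defined, and $\|u_{\bar F}(t)\|_{H^s}\le C\|\bar b(t)\|_{H^s}\le CN$ by Lemma~\ref{lem_us} applied to $\bar f$.

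For the bound, apply $\pa^\alpha$ with $|\alpha|\le s$ (a multi-index in both $x$ and $v$) to \eqref{li-sys}, pair in $L^2(\R^d\times\R^d)$ with $\pa^\alpha f$, and sum over $\alpha$. The kinetic transport $v\cdot\nabla$ is skew-adjoint at the top order and $[\pa^\alpha,v\cdot\nabla]f$ (of the same order) contributes $\ls\|f\|_{H^s}^2$; the drift $u_{\bar F}\cdot\nabla_v f$ is likewise skew-adjoint at the top order since $u_{\bar F}$ is $v$-independent, and its Leibniz remainder is $\ls\|u_{\bar F}\|_{H^s}\|f\|_{H^s}^2\ls N\|f\|_{H^s}^2$. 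The diffusion term produces, from the coercivity of $\bl$, the dissipation $-\lambda|\oper\pa^\alpha f|_\mu^2-|b^{\pa^\alpha f}|^2$, up to commutators from the polynomial weight $2d-|v|^2$; these commutators, together with the $|v|$-carrying part $\tfrac12 u_{\bar F}\cdot vf$ of $\Gamma(\bar f,f)$, are controlled — after the macro--micro splitting $f=\bpp f+\oper f$ and Lemma~\ref{lem_us} — by $C\|f\|_{H^s}^2$ plus a small multiple of $\sum_{|\alpha|\le s}\|\oper\pa^\alpha f\|_\mu^2$, while the Gaussian source $u_{\bar F}\cdot v\sqrt M$ contributes at most $C\|u_{\bar F}\|_{H^s}\|f\|_{H^s}\le CN\|f\|_{H^s}$. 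Collecting everything, for $N$ small we obtain
\[
\frac{d}{dt}\|f\|_{H^s}^2+\lambda\sum_{|\alpha|\le s}\|\oper\pa^\alpha f\|_\mu^2\le C\|f\|_{H^s}^2+CN^2,
\]
with $C$ independent of $N,\epsilon_0,T_0$. Gronwall's lemma then gives $\|f(t)\|_{H^s}^2\le e^{Ct}\|f_0\|_{H^s}^2+N^2(e^{Ct}-1)$, and with $\epsilon_0:=N/2$ followed by $T_0$ small this is $\le N^2$ on $[0,T_0]$, which is the assertion; the formal computation is justified through the regularization used to construct $f$.

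For the positivity, $F=M+\sqrt M f$ solves the linear kinetic Fokker--Planck equation $\pa_t F+v\cdot\nabla_x F+\nabla_v\cdot\big((u_{\bar F}-v)F\big)=\Delta_v F$ with $F|_{t=0}=M+\sqrt M f_0\ge0$, equivalently $\pa_t F=\Delta_v F-(u_{\bar F}-v)\cdot\nabla_v F+dF-v\cdot\nabla_x F$, whose coefficients are bounded and smooth on $[0,T_0]$ because $u_{\bar F}\in\mc([0,T_0];H^s)$ with $s$ large. Passing to $G:=e^{-dt}F$ removes the zeroth order term, and a standard comparison/minimum principle for this hypoelliptic parabolic equation (the solution being smooth for $t>0$, and $F_0\ge0$) gives $G\ge0$, hence $M+\sqrt M f\ge0$ on $[0,T_0]$; alternatively the Feynman--Kac representation of $F$ along its stochastic characteristics makes $F\ge0$ immediate from $F_0\ge0$. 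This is the analogue of the positivity argument in \cite{DFT}.

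The heart of the matter is not existence, which is granted, but closing the $H^s$ estimate so that the ball $\{\|\cdot\|_{H^s}\le N\}$ is invariant: the inhomogeneous term $u_{\bar F}\cdot v\sqrt M$ in $\Gamma(\bar f,f)$ does not vanish at $f=0$, so $f$ grows even from zero data, and this is precisely what forces the ordering ($N$ small, then $\epsilon_0\le N/2$, then $T_0$ short) and what also requires $N$ small enough that the polynomial-weight commutators from $\bl$ and the $|v|$-weighted part of $\Gamma$ are absorbed by the dissipation. A subsidiary technical point is to check that every $\mu$-weighted quantity entering the estimate is genuinely controlled by $\|f\|_{H^s}$ plus the $\bl$-dissipation, which is handled through the macro--micro splitting and Lemma~\ref{lem_us}.
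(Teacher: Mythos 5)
Your proposal follows essentially the same route as the paper's proof: positivity by the maximum principle for the rewritten equation in $F$, an $H^s$ energy estimate using the coercivity of $\bl$, control of the Leibniz remainders via Lemma~\ref{lem_us} and the macro-micro splitting, and then Gronwall plus the ordering ``$N$ small, $\epsilon_0\lesssim N$, $T_0$ small.'' The only cosmetic differences are that you invoke coercivity~(ii) (involving $\bpp$ and the extra $|b|^2$ term) where the paper uses coercivity~(iii) (with $\bpp_0$, followed by $\|\nabla^k f\|_\mu^2\leq\|\{\bi-\bpp_0\}\nabla^k f\|_\mu^2+C\|\nabla^k f\|_{L^2}^2$), and that you phrase the final step as an explicit Gronwall bound $e^{Ct}\|f_0\|_{H^s}^2+N^2(e^{Ct}-1)$ whereas the paper integrates and absorbs $CT_0\sup\|f\|_{H^s}^2$ with $CT_0\leq 1/4$; both are valid and equivalent.
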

\begin{proof} We first notice that the equation \eqref{li-sys} can be rewritten in terms of $F(x,v,t)$:
\[
\pa_t F + v \cdot \nabla F + u_{\bar{F}} \cdot \nabla_v F = \nabla_v \cdot \lt(\nabla_v F + vF \rt).
\]
Then by the maximum principle for the linearized equation \eqref{li-sys}, we find
\[
F \equiv M + \sqrt{M}f \geq 0.
\]
We next show the bound on $f$ in the rest of the proof. It follows from \eqref{li-sys} that for $0 \leq k \leq s$
\begin{align*}
\begin{aligned}
&\frac12\frac{d}{dt} \|\nabla^k f\|_{L^2}^2 + \lambda_0 \|\{ \bi - \bpp_0\} \nabla^k f\|_{\mu}^2\cr
& \quad \leq \int_{\R^d} |\lal \nabla^k \Gamma(\bar f ,f),\nabla^k f\ral |\,dx + \sum_{0\leq l <k}\binom{k}{l}\int_{\R^d} |\lal \nabla^{k-l} u_{\bar F} \cdot \nabla_v \nabla^l f, \nabla^k f\ral |\,dx\cr
&\quad \leq \frac12 \sum_{0 \leq l \leq k}\binom{k}{l}\int_{\R^d} |\lal \nabla^{k-l} u_{\bar F} \cdot v \nabla^l f, \nabla^k f\ral |\,dx + \int_{\R^d} |\lal \nabla^k u_{\bar F}\cdot v\sqrt{M}, \nabla^k f\ral |\,dx\cr
&\qquad + \sum_{0\leq l <k}\binom{k}{l}\int_{\R^d} |\lal \nabla^{k-l} u_{\bar F} \cdot \nabla_v \nabla^l f, \nabla^k f\ral |\,dx\cr
&\quad =:\sum_{i=1}^3 I_i.
\end{aligned}
\end{align*}
First, we can easily find
\[
I_2  = \int_{\R^d} |\nabla^k u_{\bar F} \cdot \nabla^k b|\,dx \leq \|\nabla^k u_{\bar F}\|_{L^2}\|\nabla^k b\|_{L^2} \ls \|\nabla^k(\bar a, \bar b)\|_{L^2}\|\nabla^k b\|_{L^2} \ls \|\nabla^k \bar f\|_{L^2}\|\nabla^k f\|_{L^2},
\]
due to Lemma \ref{lem_us}. For the estimate of $I_1$, we obtain
\[
I_1 \ls \sum_{0\leq l \leq k}\int_{\R^d \times \R^d} |\nabla^{k-l}u_{\bar F}| |v| |\nabla^l f| |\nabla^k f|\,dxdv \leq \lt(\int_{\R^d} |\nabla^{k-l}u_{\bar F}|^2\|\nabla^l f\|_{L^2_v}^2\,dx \rt)^{1/2}\|\nabla^k f\|_{\mu}.
\]
On the other hand, the last term of the above inequality is bounded by
\bq\label{u_est1}
\lt(\int_{\R^d} |\nabla^{k-l}u_{\bar F}|^2\|\nabla^l f\|_{L^2_v}^2\,dx \rt)^{1/2} \leq \left\{ \begin{array}{ll}
\|\nabla^{k-l}u_{\bar F}\|_{L^\infty}\|\nabla^l f\|_{L^2} & \textrm{for $|k - l | \leq [d/2]$,}\\[2mm]
\|\nabla^{k-l} u_{\bar F}\|_{L^2}\|\nabla^l f\|_{L^2_v(L^\infty)} & \textrm{for $|k - l | \geq [d/2] + 1$}.
  \end{array} \right.
\eq
This implies
\[
I_1 \ls \|\bar f\|_{L^2_v(H^s)}\sum_{0 \leq \alpha + \beta \leq s}\|\nabla^\alpha \nabla^\beta_v f\|_{\mu}^2.
\]
Similarly, we also deduce 
\[
I_3 \ls \|\bar f\|_{L^2_v(H^s)}\sum_{0 \leq \alpha + \beta \leq s}\|\nabla^\alpha \nabla^\beta_v f\|_{\mu}^2.
\]
Thus we have
\bq\label{est_3_1}
\frac12\frac{d}{dt} \|\nabla^k f\|_{L^2}^2 + \lambda_0 \|\{ \bi - \bpp_0\}\nabla^k f\|_{\mu}^2 \ls \|\nabla^k \bar f\|_{L^2}\|\nabla^k f\|_{L^2} + \|\bar f\|_{L^2_v(H^s)}\sum_{0 \leq \alpha + \beta \leq s}\|\nabla^\alpha \nabla^\beta_v f\|_{\mu}^2.
\eq
Note that $\|\bpp_0 \nabla^k f\|_\mu$ can be estimated as 
$$\begin{aligned}
\|\bpp_0\nabla^k  f\|_{\mu}^2 &= \|\nabla^k a \sqrt{M}\|_{\mu}^2\cr
&= \int_{\R^d\times \R^d} |\nabla_v \nabla^k a \sqrt{M}|^2 + (1 + |v|^2)|\nabla^k a \sqrt{M}|^2\,dvdx\cr
&\leq 2 \int_{\R^d} |\nabla^k a|^2\,dx\int_{\R^d} (1+|v|^2)M\,dv\cr
&\leq C\|\nabla^k a\|_{L^2}^2\cr
&\leq C\|\nabla^k f\|_{L^2}^2,
\end{aligned}$$
and this yields
\bq\label{est_3_2}
\|\nabla^k f\|_{\mu}^2 \leq \|\{ \bi - \bpp_0\} \nabla^k f\|_{\mu}^2 + \| \bpp_0 \nabla^k f\|_{\mu}^2 \leq \|\{ \bi - \bpp_0\} \nabla^k f\|_{\mu}^2 + C\|\nabla^k f\|_{L^2}^2.
\eq
Hence, by combining \eqref{est_3_1} and \eqref{est_3_2} we have
$$\begin{aligned}
&\frac12\frac{d}{dt} \|\nabla^k f\|_{L^2}^2 + \lambda_0 \|\nabla^k f\|_{\mu}^2 \cr
&\quad \ls \|\nabla^k \bar f\|_{L^2}\|\nabla^k f\|_{L^2} + \|\bar f\|_{L^2_v(H^s)}\sum_{0 \leq \alpha + \beta \leq s}\|\nabla^\alpha \nabla^\beta_v f\|_{\mu}^2 + C\|\nabla^k f\|_{L^2}^2.
\end{aligned}$$
In a similar way, we also find 
$$\begin{aligned}
&\frac{d}{dt}\|f\|_{H^s}^2 + \lambda_0 \sum_{0\leq \alpha + \beta \leq s}\|\nabla^\alpha \nabla^\beta_v f\|_{\mu}^2\cr
&\qquad \leq C\|\bar f\|_{L^2_v(H^s)} \sum_{0 \leq \alpha + \beta \leq s}\|\nabla^\alpha \nabla^\beta_v f\|_{\mu}^2 + C\|\bar f\|_{H^s}^2 + C\|f\|_{H^s}^2.
\end{aligned}$$
Since $\|\bar f(t)\|_{H^s} \leq N$ for $0 \leq t \leq T_0$,
\[
\frac{d}{dt}\|f\|_{H^s}^2 + \lt( \lambda_0 - CN\rt)\sum_{0\leq \alpha + \beta \leq s}\|\nabla^\alpha \nabla^\beta_v f\|_{\mu}^2 \leq CN^2 + C\|f\|_{H^s}^2,
\]
and by integrating it over $[0,t]\,(t \leq T_0)$ we get
\[
\|f(t)\|_{H^s}^2 + \lt( \lambda_0 - CN\rt)\int_0^t \sum_{0\leq \alpha + \beta \leq s}\|\nabla^\alpha \nabla^\beta_v f(\tau)\|_{\mu}^2 \,d\tau \leq \epsilon_0^2 + CN^2 T_0 + C\sup_{0\leq \tau \leq T_0}\|f(\tau)\|_{H^s}^2 T_0.
\]
Finally, we choose $\epsilon_0,\, T_0 >0$, and $N > 0$ such that
\[
0 < \epsilon_0 \ll 1, \quad \frac{\lambda_0}{C} > N, \quad \mbox{and} \quad CT_0 \leq \frac14,
\]
to conclude
\[
\sup_{0\leq t \leq T_0}\|f(t)\|_{H^s}^2 \leq \epsilon_0^2 + CN^2 T_0 \leq N^2.
\]
\end{proof}
\subsection{Proof of Theorem \ref{thm_loc}} We construct the approximated solutions $f^m$ for the system \eqref{main_eq} as follows.
\bq\label{app-sys}
\pa_t f^{m+1} + v \cdot \nabla_x f^{m+1} + u_{F^m} \cdot \nabla f^{m+1} = \bl f^{m+1} + \Gamma(f^m,f^{m+1}),
\eq
with initial data and first iteration step:
\[
f^m(x,v)|_{t=0} = f_0 \quad \mbox{for all} \quad m \geq 1,\quad  (x,v) \in \R^d \times \R^d,
\]
and
\[
f^0(x,v,t) = 0 \quad \mbox{for} \quad (x,v,t) \in \R^d \times \R^d \times \R_+.
\]
Here $u_{F^m}$ and $\Gamma(f^m, f^{m+1})$ are given by
\[
u_{F^m} = \frac{\int_{\R^d} v f^m \sqrt{M}\,dv}{1 + \int_{\R^d} f^m \sqrt{M}\,dv} \quad \mbox{and} \quad \Gamma(f^m, f^{m+1}) = u_{F^m} \cdot \lt( \frac12 v f^{m+1} + v\sqrt{M}\rt),
\]
respectively. Set
\[
a^m := \int_{\R^d} f^m \sqrt{M}\,dv \quad \mbox{and} \quad b^m := \int_{\R^d} v f^m \sqrt{M}\,dv.
\]
Let $\mathcal{I}(s,T;N)$ be the solution space for $f$ defined by
\[
\mathcal{I}(s,T;N) := \lt\{ f \in \mc([0,T]; H^s(\R^d \times \R^d)) : M + \sqrt{M}f \geq 0 \quad \mbox{and} \quad \sup_{0 \leq t \leq T}\|f(t)\|_{H^s} \leq N \rt\}.
\]
Then as a direct consequence of Lemma \ref{lem_lin}, we have the uniform bound estimate of $f$ in the proposition below.
\begin{proposition}Let $d \geq 3$ and $s \geq 2[d/2] +2$. There exist positive constants $\epsilon_0,\, T_0$, and $N > 0$ such that if $f_0 \in H^s(\R^d \times \R^d)$ with $\|f_0\|_{H^s} \leq \epsilon_0$ and $M + \sqrt{M}f_0 \geq 0$, then for each $m \geq 0$, $f^m$ is well-defined and $f^m \in \mathcal{I}(s,T_0;N)$.
\end{proposition}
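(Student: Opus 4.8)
The final statement to prove is the Proposition asserting that each approximate solution $f^m$ is well-defined and lies in $\mathcal{I}(s,T_0;N)$. Since this is billed as a direct consequence of Lemma~\ref{lem_lin}, my plan is an induction on $m$.

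\medskip

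\textbf{Plan of proof.} The argument proceeds by induction on $m$. For the base case $m=0$, the function $f^0 \equiv 0$ is trivially in $\mc([0,T_0];H^s)$, satisfies $M + \sqrt{M}f^0 = M \geq 0$, and obeys $\sup_{0\le t\le T_0}\|f^0(t)\|_{H^s} = 0 \le N$; hence $f^0 \in \mathcal{I}(s,T_0;N)$. For the inductive step, suppose $f^m \in \mathcal{I}(s,T_0;N)$ for some $m \ge 0$. Then $f^m$ plays the role of the given function $\bar f$ in the linearized system \eqref{li-sys}: indeed, the equation \eqref{app-sys} for $f^{m+1}$ is precisely \eqref{li-sys} with $\bar f = f^m$, since $u_{F^m}$ and $\Gamma(f^m,f^{m+1})$ match the definitions of $u_{\bar F}$ and $\Gamma(\bar f,f)$ after the substitution $\bar f = f^m$, $f = f^{m+1}$. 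The inductive hypothesis gives exactly the three hypotheses required by Lemma~\ref{lem_lin}: $M + \sqrt{M}f_0 \ge 0$ and $\|f_0\|_{H^s} \le \epsilon_0$ (these come from the assumption on the initial data, and the same $\epsilon_0, T_0, N$ are fixed once and for all by Lemma~\ref{lem_lin}), together with $\sup_{0\le t\le T_0}\|f^m(t)\|_{H^s} \le N$. Applying Lemma~\ref{lem_lin} yields that $f^{m+1}$ is the unique classical solution of \eqref{app-sys}, that $M + \sqrt{M}f^{m+1} \ge 0$, that $f^{m+1} \in \mc([0,T_0];H^s(\R^d\times\R^d))$, and that $\sup_{0\le t\le T_0}\|f^{m+1}(t)\|_{H^s} \le N$; that is, $f^{m+1} \in \mathcal{I}(s,T_0;N)$. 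This closes the induction.

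\medskip

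\textbf{On well-definedness.} I would separately note why $f^{m+1}$ exists at all: since $f^m \in \mc([0,T_0];H^s)$ with the stated smallness, the coefficient $u_{F^m}$ is a smooth, bounded, and Lipschitz function of $(x,t)$ with the requisite Sobolev regularity (here Lemma~\ref{lem_us} and the Sobolev embeddings of Lemma~\ref{lem_sob} guarantee the denominator $1 + a^m$ stays bounded away from zero, using $\|a^m\|_{L^\infty} \le C\|f^m\|_{H^s} \le CN \ll 1$), so the standard linear solvability theory invoked just after \eqref{li-sys} applies to \eqref{app-sys} and produces a unique $f^{m+1} \in \mc([0,T_0];H^s)$. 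The crucial point is that the threshold constants $\epsilon_0, T_0, N$ are the ones delivered by Lemma~\ref{lem_lin} and do \emph{not} depend on $m$, so the bound propagates uniformly in $m$.

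\medskip

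\textbf{Main obstacle.} There is essentially no new obstacle here: the real work has been carried out in Lemma~\ref{lem_lin}, whose energy estimate already absorbs the quadratic-in-$N$ contributions (via the choice $\lambda_0/C > N$ and $CT_0 \le 1/4$). The only point requiring a moment's care is the bookkeeping that ensures the \emph{same} triple $(\epsilon_0,T_0,N)$ works simultaneously for every $m$ — i.e., that the constants are chosen once, before the induction begins, and that the nonlinearity $\Gamma(f^m,f^{m+1})$, although not bilinear, has been handled in Lemma~\ref{lem_lin} uniformly in the data bound $N$. Once this is observed, the proposition follows immediately by induction.
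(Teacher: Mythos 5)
Your proof is correct and matches the paper's (unwritten but intended) argument exactly: the proposition is stated as a direct consequence of Lemma~\ref{lem_lin}, and the natural route is precisely the induction on $m$ you give, with $f^0\equiv 0$ as base case and $\bar f = f^m$ in the linearized system for the inductive step. Your added remarks on why $f^{m+1}$ is well-defined and why the triple $(\epsilon_0,T_0,N)$ is fixed independently of $m$ are accurate and helpful bookkeeping.
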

We next show that the approximations $f^m$ are Cauchy sequences in $\mc([0,T_0];L^2(\R^d \times \R^d))$. 
\begin{lemma}\label{lem_cau}Let $f^m$ be a sequence of approximated solutions with the initial data $f_0$ satisfying $\|f_0\|_{H^s} \leq \epsilon_0$. Then $f^m$ is Cauchy sequence in $\mc([0,T_0];L^2(\R^d \times \R^d))$.
\end{lemma}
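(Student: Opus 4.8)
The plan is to estimate the increments $g^{m+1}:=f^{m+1}-f^m$. I would derive a closed differential inequality
\[
\frac{d}{dt}\|g^{m+1}(t)\|_{L^2}^2 \le C\|g^{m+1}(t)\|_{L^2}^2 + C\|g^m(t)\|_{L^2}^2, \qquad 0\le t\le T_0,
\]
and, since $g^{m+1}(0)=0$, integrate it in time and apply the Gronwall-type Lemma \ref{lem_seq} with $a_n=\|g^n\|_{L^2}^2$ and $C_1=0$. This yields $\sup_{0\le t\le T_0}\|g^n(t)\|_{L^2}^2\le K^nT_0^n/n!$, so that $\sum_n \sup_{[0,T_0]}\|g^n(t)\|_{L^2}$ converges; hence the partial sums $f^m=\sum_{n\le m}g^n$ form a Cauchy sequence in $\mc([0,T_0];L^2(\R^d\times\R^d))$.

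To obtain the inequality I would subtract \eqref{app-sys} at the indices $m$ and $m-1$. Setting $h^m:=u_{F^m}-u_{F^{m-1}}$, which depends only on $(x,t)$, the difference solves
\[
\pa_t g^{m+1} + v\cdot\nabla g^{m+1} + u_{F^m}\cdot\nabla_v g^{m+1} - \bl g^{m+1} - \tfrac12(u_{F^m}\cdot v)\,g^{m+1} = - h^m\cdot\nabla_v f^m + h^m\cdot\lt(\tfrac12 v f^m + v\sqrt M\rt).
\]
Pairing with $g^{m+1}$ in $L^2(\R^d\times\R^d)$, the two transport terms vanish after integration by parts (since $u_{F^m}$ is independent of $v$), and $-\lal\bl g^{m+1},g^{m+1}\ral\ge\lambda_0\|\{\bi-\bpp_0\}g^{m+1}\|_\mu^2$ by the coercivity estimate for $\bl$. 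This leaves, on the right-hand side, the three terms
\[
J_1:=\tfrac12\lt|\int(u_{F^m}\cdot v)|g^{m+1}|^2\rt|,\quad J_2:=\lt|\int h^m\cdot\nabla_v f^m\,g^{m+1}\rt|,\quad J_3:=\lt|\int h^m\cdot\lt(\tfrac12 v f^m + v\sqrt M\rt)g^{m+1}\rt|,
\]
the integrals being over $\R^d\times\R^d$.

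For $J_1$ I would use $\|u_{F^m}\|_{L^\infty}\ls\|f^m\|_{H^s}\le N$ (via Lemmas \ref{lem_us} and \ref{lem_sob}) to get $J_1\ls N\|g^{m+1}\|_\mu^2$. For $J_2$ and $J_3$, the identity $h^m=\frac{(b^m-b^{m-1})(1+a^m)-b^m(a^m-a^{m-1})}{(1+a^m)(1+a^{m-1})}$ together with the uniform smallness $\|(a^m,b^m)\|_{L^\infty}\ls N\ll1$ gives $|h^m|\ls|a^m-a^{m-1}|+|b^m-b^{m-1}|$, hence $\|h^m\|_{L^2_x}\ls\|(a^m-a^{m-1},b^m-b^{m-1})\|_{L^2_x}\ls\|g^m\|_{L^2}$ by Cauchy--Schwarz in $v$ against $\sqrt M$ and $v\sqrt M$. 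Putting all $x$-regularity on the iterates through $\|\nabla_v f^m\|_{L^\infty_x(L^2_v)}+\|f^m\|_{L^\infty_x(L^2_v)}\ls\|f^m\|_{H^{[d/2]+2}}\le N$ (this is where $s\ge 2[d/2]+2$ enters), I would bound $J_2\ls\|h^m\|_{L^2_x}\|\nabla_v f^m\|_{L^\infty_x(L^2_v)}\|g^{m+1}\|_{L^2}\ls N\|g^m\|_{L^2}\|g^{m+1}\|_{L^2}$, and for $J_3$ transfer the weight $|v|$ onto $g^{m+1}$, so that only the unweighted $\|f^m\|_{L^\infty_x(L^2_v)}$ (not a $\mu$-weighted norm of $f^m$) appears, getting $J_3\ls\|h^m\|_{L^2_x}(\|f^m\|_{L^\infty_x(L^2_v)}+1)\|g^{m+1}\|_\mu\ls N\|g^m\|_{L^2}\|g^{m+1}\|_\mu+\|g^m\|_{L^2}\|g^{m+1}\|_{L^2}$. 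Finally, using $\|g^{m+1}\|_\mu^2\le\|\{\bi-\bpp_0\}g^{m+1}\|_\mu^2+C\|g^{m+1}\|_{L^2}^2$ exactly as in \eqref{est_3_2}, Young's inequality, and a choice of $N$ small enough to absorb every $\|\{\bi-\bpp_0\}g^{m+1}\|_\mu^2$ contribution into the left-hand side, I arrive at the stated inequality, and conclude as above.

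The hard part is the bookkeeping forced by the non-bilinear nonlinearity: because $u_{F^m}=b^m/(1+a^m)$ depends rationally on $f^m$, the increment $h^m$ must be split carefully, and every term in $J_2,J_3$ has to be arranged so that $g^m$ enters only through $\|g^m\|_{L^2}$ and $g^{m+1}$ only through $\|g^{m+1}\|_{L^2}$ and $\|g^{m+1}\|_\mu$ (the latter absorbed by the Fokker--Planck dissipation), while the uniformly $H^s$-bounded iterates carry all the $x$-derivatives via Sobolev embedding. The velocity weight $|v|$ produced by $\Gamma(f^m,f^{m+1})$ is the single place where the dissipation is genuinely needed; everything else is a plain $L^2$ energy estimate.
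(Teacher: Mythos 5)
Your proposal is correct and follows essentially the same route as the paper: subtract consecutive iteration equations, pair with $g^{m+1}=f^{m+1}-f^m$ in $L^2$, use the coercivity of $\bl$ together with the key bound $\|u_{F^m}-u_{F^{m-1}}\|_{L^2}\ls\|g^m\|_{L^2}$ and the uniform $H^s$ bound on the iterates, and then close with Lemma \ref{lem_seq} (the $C_1=0$ case) to obtain factorial decay. The only cosmetic differences are that the paper compresses your estimates of $J_1,J_2,J_3$ into the phrase ``by using similar estimates as in Lemma \ref{lem_lin} with $k=0$'' and handles $J_1$ by the split $|v|\,|g^{m+1}|^2\ls\|g^{m+1}\|_{L^2}\|g^{m+1}\|_\mu$ followed by Young's inequality (so no additional smallness of $N$ is invoked there), whereas you absorb $J_1\ls N\|g^{m+1}\|_\mu^2$ directly using the constraint $CN<\lambda_0$ already imposed in Lemma \ref{lem_lin}; both work under the hypotheses in force.
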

\begin{proof} It follows from \eqref{app-sys} that
$$\begin{aligned}
&\pa_t (f^{m+1} - f^m) + v \cdot \nabla (f^{m+1} - f^m) + u_{F^m}\cdot \nabla_v (f^{m+1} - f^m)\cr
&\quad = - (u_{F^m} - u_{F^{m-1}})\cdot \nabla_v f^m + \bl (f^{m+1} - f^m) + \frac12 u_{F^{m-1}} \cdot v \lt(f^{m+1} - f^m \rt)\cr
&\qquad + (u_{F^m} - u_{F^{m-1}}) \cdot \lt(\frac12 v f^m + v \sqrt{M} \rt).
\end{aligned}$$
Then by using similar estimates as in Lemma \ref{lem_lin} with $k=0$ we get
$$\begin{aligned}
&\frac12\frac{d}{dt}\|f^{m+1} - f^m\|_{L^2}^2 + \lambda_0 \|\{ \bi - \bpp_0\} (f^{m+1} - f^m)\|_{\mu}^2\cr
&\quad \leq \frac12 \int_{\R^d} u_{F^{m-1}}\cdot \lal v (f^{m+1} - f^m), f^{m+1} - f^m\ral\,dx\cr
&\qquad + \int_{\R^d} (u_{F^m} - u_{F^{m-1}}) \cdot \lt\lal \frac12 v f^m + v \sqrt{M} - \nabla_v f^m, f^{m+1} - f^m \rt\ral\,dx\cr
&\quad \leq C\|f^{m+1} - f^m\|_{L^2}\|f^{m+1} - f^m\|_{\mu} + C\|f^m - f^{m-1}\|_{L^2}\|f^{m+1} - f^m\|_{\mu},
\end{aligned}$$
where we used
\[
\|u_{F^m} - u_{F^{m-1}}\|_{L^2} \ls \|a^m - a^{m-1}\|_{L^2} + \|b^m - b^{m-1}\|_{L^2} \ls \|f^m - f^{m-1}\|_{L^2}.
\]
We now use the estimate
\[
\|f^{m+1} - f^m\|_{\mu} \leq \|\{ \bi - \bpp_0\} (f^{m+1} - f^m)\|_{\mu}^2 + C\|f^{m+1} - f^m\|_{L^2}
\]
to obtain
\begin{align}\label{est_3_3}
\begin{aligned}
&\frac12\frac{d}{dt}\|f^{m+1} - f^m\|_{L^2}^2 + \lambda_0\|f^{m+1} - f^m\|_{\mu}^2 \cr
&\quad \leq C\|f^{m+1} - f^m\|_{L^2}^2 + C\|f^m - f^{m-1}\|_{L^2}^2 + \frac{\lambda_0}{2}\|f^{m+1} - f^m\|_{\mu}^2.
\end{aligned}
\end{align}
Applying the Gronwall's inequality for \eqref{est_3_3}, we deduce
\[
\|(f^{m+1} - f^m)(t)\|_{L^2}^2 \ls \int_0^t \|(f^m - f^{m-1})(s)\|_{L^2}^2\,ds.
\]
Finally, we use Lemma \ref{lem_seq} to have
\[
\|(f^{m+1} - f^m)(t)\|_{L^2}^2 \ls \frac{T_0^{m+1}}{(m+1)!} \quad \mbox{for} \quad t \in [0,T_0].
\]
This concludes the desired result.
\end{proof}
\begin{proof}[Proof of Theorem \ref{thm_loc}]Note that $f^m$ is bounded in $\mc([0,T_0]; H^s(\R^d \times \R^d))$. Thus we use the Gagliardo-Nirenberg interpolation inequality together with the convergence estimate in Lemma \ref{lem_cau} to deduce
\[
f^m \to f \quad \mbox{in } \mc([0,T_0];H^{s-1}(\R^d \times \R^d)),
\]
as $m \to \infty$. Furthermore, it follows from the lower semicontinuity of the norm that $f^m \in \mathcal{I}(s,T_0;N)$ implies
\[
M + \sqrt{M}f \geq 0 \quad \mbox{and} \quad \sup_{0 \leq t \leq T_0}\|f(t)\|_{H^s} \leq N. 
\]
Finally, if we let $f,g$ be the classical solutions obtained from the above with the same initial data. Then we have
\[
\|f(t) - g(t)\|_{L^2}^2 \leq C\int_0^t \|f(\tau) - g(\tau)\|_{L^2}^2\,d\tau,
\]
for some positive constant $C >0$, and the standard argument yields the uniqueness of the classical solutions. 
\end{proof}
%
%
%
%

\section{A priori estimates}\label{sec_apri}
In this section, we provide a priori estimates for the global existence of classical solutions to the equation \eqref{main_eq}. By using the classical energy method together with the careful analysis of the local averaged velocity, we obtain several uniform a priori estimates of energy inequalities. These energy estimates play a crucial role in obtaining the global well-posedness of solutions with the help of the local existence as well as the continuum argument.
\subsection{Energy estimates}
We first present a priori estimate of $\|f\|_{L^2_v(H^s)}$ for the equation \eqref{main_eq}.
\begin{lemma}\label{lem_energy1}Let $d \geq 3$ and $s \geq 2[d/2] +2$, and let $T > 0$ be given. Suppose that $f\in \mc([0,T]; H^s(\R^d \times \R^d))$ is the solution to the equation \eqref{main_eq} satisfying
\[
\sup_{0 \leq t \leq T}\|f(t)\|_{H^s} \leq \epsilon_1 \ll 1.
\]
Then we have
\[
\frac{d}{dt} \|f\|_{L^2_v(H^s)}^2 + C_1\sum_{0 \leq k \leq s}\|\nabla^k \oper f\|_{\mu}^2\leq C\epsilon_1\|\nabla(a,b)\|_{H^{s-1}}^2,
\]
for some positive constants $C_1, C > 0$.
\end{lemma}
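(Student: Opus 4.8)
The plan is to apply $\nabla^k$ for $0 \le k \le s$ to the equation \eqref{main_eq}, pair against $\nabla^k f$ in $L^2(\R^d \times \R^d)$, sum over $k$, and then carefully estimate the three nonlinear-type contributions: the transport term $u_F \cdot \nabla_v f$, the quadratic part $\frac12 u_F v f$ of $\Gamma(f,f)$, and the source part $u_F v \sqrt M$ of $\Gamma(f,f)$. The linear term $\langle \nabla^k \bl f, \nabla^k f\rangle$ produces, via Proposition (i), exactly the good dissipation $-\|\nabla^k \{\bi - \bpp\} f\|_\mu^2 \ge -\|\nabla^k \oper f\|_\mu^2$ (using the notation $\oper = \{\bi - \bpp\}$), which after summation yields the term $C_1 \sum_{k}\|\nabla^k \oper f\|_\mu^2$ on the left-hand side. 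Since $u_F = b/(1+a)$ in the rescaled variables, the key structural observation is that every nonlinear term carries a factor of $u_F$, hence a factor of $b$, and Lemma \ref{lem_us} lets us trade $\nabla^k(b/(1+a))$ and $\nabla^k(b\otimes b/(1+a))$ for $\|\nabla^k(a,b)\|_{L^2} \le \|\nabla^k f\|_{L^2}$, with an extra gain of $\epsilon_1$ in the doubly-quadratic case.

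The heart of the argument is to show that the right-hand side is controlled by $C\epsilon_1 \|\nabla(a,b)\|_{H^{s-1}}^2$ — note that it is the \emph{macroscopic gradient} that appears, not $\|f\|_{H^s}$. For the source term $\langle \nabla^k(u_F v\sqrt M), \nabla^k f\rangle$, I would integrate in $v$ first: since $v\sqrt M \in \mathcal N$, pairing against $\nabla^k f$ picks out the macroscopic component $\nabla^k b$, giving $\int_{\R^d} |\nabla^k u_F||\nabla^k b|\,dx \ls \|\nabla^k(a,b)\|_{L^2}\|\nabla^k b\|_{L^2}$ by Lemma \ref{lem_us}; summed over $1 \le k \le s$ this is $\ls \|\nabla(a,b)\|_{H^{s-1}}^2$, but this is not yet multiplied by $\epsilon_1$. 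The resolution — and this is where one must be careful — is that the $k=0$ and the low-$k$ pieces where \emph{no} derivative falls on $u_F$ must either vanish by the orthogonality/conservation structure or be absorbed into the $\oper f$ dissipation; the surviving terms always have at least one derivative on $u_F = b/(1+a)$, which supplies a factor $\|u_F\|_{L^\infty} \ls \|(a,b)\|_{H^s} \le \epsilon_1$, or else pair a microscopic $\oper f$ against $\nabla^k f$ and get absorbed. For the transport term $u_F \cdot \nabla_v \nabla^k f$, the term with no derivative on $u_F$ is $\int u_F \cdot \langle \nabla_v \nabla^k f, \nabla^k f\rangle = \frac12 \int u_F \cdot \nabla_v |\nabla^k f|^2 = 0$ after integration by parts in $v$; the remaining commutator terms $[\nabla^k, u_F\cdot\nabla_v]f$ distribute at least one $x$-derivative onto $u_F$, producing the $\epsilon_1$ and a factor estimated as in \eqref{u_est1} of Lemma \ref{lem_lin} by splitting according to whether $|k-l| \le [d/2]$ (use $L^\infty$ on $u_F$) or $|k-l| \ge [d/2]+1$ (use $L^\infty_v$-Sobolev on $\nabla^l f$). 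The quadratic part $\frac12 u_F v f$ is handled the same way, and because it is genuinely quadratic in $f$ one extracts $\|u_F\|_{L^\infty}$ or $\|\nabla^{k-l}u_F\|$ factors that are $O(\epsilon_1)$ via Lemma \ref{lem_us} on $b\otimes b/(1+a)$.

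After assembling these bounds one arrives at
$$
\frac{d}{dt}\|f\|_{L^2_v(H^s)}^2 + \lambda \sum_{0 \le k \le s}\|\nabla^k \oper f\|_\mu^2 \le C\epsilon_1 \sum_{0 \le k \le s}\|\nabla^k f\|_\mu^2 + C\epsilon_1\|\nabla(a,b)\|_{H^{s-1}}^2,
$$
and then I would use the decomposition $\|\nabla^k f\|_\mu^2 \le \|\nabla^k \oper f\|_\mu^2 + \|\bpp \nabla^k f\|_\mu^2 \ls \|\nabla^k \oper f\|_\mu^2 + \|\nabla^k(a,b)\|_{L^2}^2$ (the analogue of \eqref{est_3_2}, now with the full projector $\bpp$ so that $|b|^2$ is absorbed too). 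The $\|\nabla^k \oper f\|_\mu^2$ piece, being multiplied by $C\epsilon_1 \ll \lambda$, is absorbed into the dissipation on the left; the $\|\nabla^k(a,b)\|_{L^2}^2$ piece for $k \ge 1$ is exactly $\ls \|\nabla(a,b)\|_{H^{s-1}}^2$, and the $k=0$ macroscopic term $\|(a,b)\|_{L^2}^2$ must not appear — it is avoided because in every nonlinear term the factor of $u_F$ itself is $O(\epsilon_1)$ in $L^\infty$ so a bare $\|(a,b)\|_{L^2}^2$ is never generated; only its gradient survives. This yields the claimed inequality. \textbf{The main obstacle} is precisely this bookkeeping: tracking, term by term, that the right-hand side never produces $\|f\|_{H^s}^2$ or $\|(a,b)\|_{L^2}^2$ without a derivative on the macroscopic quantity, so that all surviving terms carry either an $\epsilon_1$ from $\|u_F\|_{L^\infty}$ or get absorbed into $\sum_k \|\nabla^k \oper f\|_\mu^2$ — this is exactly the non-bilinearity of $\Gamma$ that the introduction flags as the source of difficulty, and it forces the somewhat delicate case split \eqref{u_est1} on where derivatives land.
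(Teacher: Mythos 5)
Your proposal correctly sets up the energy estimate and correctly identifies the critical obstruction: after pairing $\nabla^k(u_F\cdot v\sqrt M)$ against $\nabla^k f$ you get $\int_{\R^d}\nabla^k u_F\cdot\nabla^k b\,dx$, which, bounded naively, is $\lesssim \|\nabla(a,b)\|_{H^{s-1}}^2$ \emph{without} the factor $\epsilon_1$. But your proposed resolution does not close this gap. You argue that ``the surviving terms always have at least one derivative on $u_F$, which supplies a factor $\|u_F\|_{L^\infty}\lesssim\epsilon_1$, or else pair a microscopic $\oper f$ against $\nabla^k f$ and get absorbed.'' Neither branch applies here: the term is purely macroscopic, so there is no $\oper f$ to absorb into $\sum_k\|\nabla^k\oper f\|_\mu^2$; and the leading piece of $\nabla^k u_F = \nabla^k\!\left(\frac{b}{1+a}\right)$, namely $\frac{\nabla^k b}{1+a}$, carries no small factor at all --- only the commutator pieces, where at least one derivative lands on $\frac{1}{1+a}$, are $O(\epsilon_1)$ by Lemma \ref{lem_us}. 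So your argument, as written, would produce $C\|\nabla(a,b)\|_{H^{s-1}}^2$ on the right-hand side rather than $C\epsilon_1\|\nabla(a,b)\|_{H^{s-1}}^2$, which is not good enough for the absorption against Lemma \ref{lem_energy3} later.

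The missing idea is a cancellation, not a smallness estimate. You attribute the dissipation to ``Proposition (i)'' and write only $\langle\bl\nabla^k f,\nabla^k f\rangle\leq -\lambda\|\nabla^k\oper f\|_\mu^2$, dropping the macroscopic piece. In fact $\bl\bpp f = -b\cdot v\sqrt M$, so $\langle\bl\nabla^k f,\nabla^k f\rangle = \langle\bl\nabla^k\oper f,\nabla^k\oper f\rangle - |\nabla^k b|^2$ (this is the $|b^f|^2$ term in Proposition 2.1(ii)). This $-\int|\nabla^k b|^2\,dx$ must be retained and combined with the source:
$$
\int_{\R^d}\nabla^k u_F\cdot\nabla^k b\,dx - \int_{\R^d}|\nabla^k b|^2\,dx \;=\; -\int_{\R^d}\frac{a\,|\nabla^k b|^2}{1+a}\,dx \;+\; (\text{commutators}),
$$
and only now is the leading macroscopic contribution manifestly $O(\epsilon_1\|\nabla^k b\|_{L^2}^2)$. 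The same issue infects your $k=0$ discussion: you claim ``a bare $\|(a,b)\|_{L^2}^2$ is never generated; only its gradient survives,'' but the source term at $k=0$ produces $\int\frac{|b|^2}{1+a}\,dx$, which \emph{is} of size $\|b\|_{L^2}^2$ and is not controlled by $\|\nabla(a,b)\|_{H^{s-1}}^2$ on $\R^d$ (no Poincar\'e). It disappears only after two cancellations: against $-\int|b|^2\,dx$ from $\bl\bpp f$, and then the residual $-\int\frac{a|b|^2}{1+a}\,dx$ cancels against the $\int u_F\cdot ab\,dx$ piece coming from the decomposition $\langle vf,f\rangle = 2ab + 2\langle v\bpp f,\oper f\rangle + \langle v,|\oper f|^2\rangle$ of the quadratic term. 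Without exhibiting these exact cancellations, the claimed inequality cannot be reached; the rest of your bookkeeping (the case split on where derivatives land, the absorption of $C\epsilon_1\|\nabla^k\oper f\|_\mu^2$, the use of Lemma \ref{lem_us}) is in line with the paper's argument.
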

\begin{proof} $\diamond$ $L^2$-estimate: We first easily find
$$\begin{aligned}
\frac12 \frac{d}{dt}\int_{\R^d \times \R^d} f^2\,dxdv &= \int_{\R^d \times \R^d} f\lt( - v \cdot \nabla_x f + u_F \cdot \lt( v\sqrt{M} + \frac{v}{2}f - \nabla_v f \rt) + \bl f \rt)dxdv\cr
&=\int_{\R^d \times \R^d} u_F \cdot v \sqrt{M}f\,dxdv + \int_{\R^d \times \R^d} u_F \cdot \frac{v}{2}f^2 \,dxdv + \int_{\R^d \times \R^d} f\bl f\,dxdv\cr
&=: \sum_{i=1}^3 I_i.
\end{aligned}$$
Here, $I_1$ and $I_3$ are estimated by
$$\begin{aligned}
I_1 &= \int_{\R^d} u_F \cdot b\,dx = \int_{\R^d} \frac{|b|^2}{1 + a}\,dx,\cr
I_3 &= \int_{\R^d \times \R^d} f \,\bl \oper f \,dxdv + \int_{\R^d \times \R^d} f \,\bl \bpp f\,dxdv = \int_{\R^d} \lal \bl \oper f, f\ral dx - \int_{\R^d} |b|^2 dx.
\end{aligned}$$
Thus, we obtain
\[
I_1 + I_3 =  \int_{\R^d} \lal \bl \oper f, f\ral dx - \int_{\R^d} \frac{a|b|^2}{1 + a}dx.
\]
For the estimate of $I_2$, we notice that
$$\begin{aligned}
\lal vf,f\ral &= \lal v, |\bpp f|^2 \ral + 2\lal v\bpp f, \oper f\ral + \lal v, |\oper f|^2 \ral\cr
&= 2ab + 2\lal v\bpp f, \oper f\ral + \lal v, |\oper f|^2 \ral.
\end{aligned}$$
This deduces
$$\begin{aligned}
\frac12 \frac{d}{dt}\int_{\R^d \times \R^d} f^2\,dxdv &= \int_{\R^d} \lal \bl \oper f, f\ral dx + \int_{\R^d} u_F \cdot \lal v \bpp f, \oper f\ral dx\cr
&\quad + \frac12 \int_{\R^d} u_F \cdot \lal v | \oper f|^2 \ral dx\cr
&\leq \int_{\R^d} \lal \bl \oper f, f\ral dx + \|u_F\|_{L^\infty}\|\bpp f\|_{L^2}\|\oper f\|_{\mu}\cr
&\quad + \|u_F\|_{L^\infty}\|\oper f\|_{\mu}^2\cr
&\leq \int_{\R^d} \lal \bl \oper f, f\ral dx + C\epsilon_1\|\nabla(a,b)\|_{H^{s-1}}^2 + C\epsilon_1\|\oper f\|_{\mu}^2,
\end{aligned}$$
where we used
\[
\|u_F\|_{L^\infty} \leq C\|\nabla u_F\|_{H^{[d/2]}} \leq C\|\nabla(a,b)\|_{H^{s-1}} \quad \mbox{and} \quad \|\bpp f\|_{L^2} \leq C\|(a,b)\|_{L^2} \leq C\epsilon_1,
\]
due to Lemma \ref{lem_us}.
Thus we have
\[
\frac{d}{dt}\|f\|_{L^2}^2 + \lambda\|\oper f\|_{\mu}^2 \leq C\epsilon_1\|\nabla(a,b)\|_{H^{s-1}}^2.
\]
$\diamond$ $H^s_x$-estimate: For $1 \leq k \leq s$, we take $\nabla^k$ to \eqref{main_eq} to get
\[
\pa_t \nabla^k f + v \cdot \nabla^{k+1} f + \nabla^k (u_F \cdot \nabla_v f) = \bl \nabla^k f + \nabla^k \Gamma(f,f),
\]
where 
\[
\Gamma(f,f) = u_F \cdot v\sqrt{M} + \frac{u_F \cdot v}{2}f.
\]
Then we find
$$\begin{aligned}
\frac12\frac{d}{dt}\|\nabla^k f\|_{L^2}^2 &= \int_{\R^d} \lal \bl \nabla^k f, \nabla^k f\ral dx + \int_{\R^d} \lal \nabla^k \Gamma(f,f), \nabla^k f\ral dx \cr
&\quad - \sum_{0 \leq l < k}\binom{k}{l}\int_{\R^d} \lal \nabla^{k-l}u_F \cdot \nabla_v \nabla^l f,\nabla^k f\ral dx\cr
&=: \sum_{i=1}^3J_i,
\end{aligned}$$
where $J_1$ is easily estimated by
\begin{align}\label{est_e1}
\begin{aligned}
J_1 &= \int_{\R^d} \lal \bl \nabla^k \oper f, \nabla^k f\ral dx + \int_{\R^d} \lal \bl \nabla^k \bpp f, \nabla^k f\ral dx\cr
&= \int_{\R^d} \lal \bl \nabla^k \oper f, \nabla^k f\ral dx - \int_{\R^d} |\nabla^k b|^2 dx.
\end{aligned}
\end{align}
For the estimate of $J_2$, we decompose it into two terms:
$$\begin{aligned}
J_2 &= \int_{\R^d} \lal \nabla^k (u_F \cdot v \sqrt{M}),\nabla^k f\ral dx + \frac12 \int_{\R^d} \lal \nabla^k (u_F \cdot vf),\nabla^k f\ral dx\cr
&=:J_2^1 + J_2^2.
\end{aligned}$$
Here, $J_2^1$ is estimated as 
$$\begin{aligned}
J_2^1 &= \int_{\R^d} \nabla^k u_F \cdot \nabla^k b\,dx\cr
&= \int_{\R^d} \frac{|\nabla^k b|^2}{1 + a}dx + \sum_{0 \leq l < k} \binom{k}{l}\int_{\R^d} \nabla^l b \nabla^{k-l}\lt( \frac{1}{1+a}\rt) \cdot \nabla^k b\,dx\cr
&\leq \int_{\R^d} \frac{|\nabla^k b|^2}{1 + a}dx + C\epsilon_1\|\nabla (a,b)\|_{H^{s-1}}^2,
\end{aligned}$$
where we used for $|k-l| \geq [d/2] + 1$
$$\begin{aligned}
\int_{\R^d} \nabla^l b \nabla^{k-l}\lt( \frac{1}{1+a}\rt) \cdot \nabla^k b\,dx &\leq \|\nabla^l b\|_{L^\infty}\lt\| \nabla^{k-l} \lt( \frac{1}{1+a}\rt)\rt\|_{L^2}\|\nabla^k b\|_{L^2}\cr
&\ls \|\nabla^{l + 1} b\|_{H^{[d/2]}}\|\nabla^{k-l} a\|_{L^2}\|\nabla^k b\|_{L^2}\cr
&\leq C\epsilon_1\|\nabla(a,b)\|_{H^{s-1}}^2,
\end{aligned}$$
and for $|k-l| \leq [d/2]$
$$\begin{aligned}
\int_{\R^d} \nabla^l b \nabla^{k-l}\lt( \frac{1}{1+a}\rt) \cdot \nabla^k b\,dx &\leq \|\nabla^l b\|_{L^2}\lt\| \nabla^{k-l} \lt( \frac{1}{1+a}\rt)\rt\|_{L^\infty}\|\nabla^k b\|_{L^2}\cr
&\ls \|\nabla^{l} b\|_{L^2}\|\nabla^{k-l+1} a\|_{H^{[d/2]}}\|\nabla^k b\|_{L^2}\cr
&\leq C\epsilon_1\|\nabla(a,b)\|_{H^{s-1}}^2.
\end{aligned}$$
Similarly, we obtain
$$\begin{aligned}
J_2^2 &= \frac12 \sum_{0 \leq l \leq k}\binom{k}{l}\int_{\R^d} \lal \nabla^{k-l}u_F \cdot v\lt( \{ \bi - \bpp\}\nabla^l f+ \bpp \nabla^l f \rt), \{ \bi - \bpp\}\nabla^k f+ \bpp \nabla^k f\ral \,dx\cr
&\leq C\epsilon_1 \lt( \sum_{1 \leq k \leq s} \|\nabla^k \oper f\|_{\mu}^2 + \|\nabla (a,b)\|_{H^{s-1}}^2\rt).
\end{aligned}$$
This yields
\bq\label{est_e2}
J_2 \leq \int_{\R^d} \frac{|\nabla^k b|^2}{1 + a}dx + C\epsilon_1 \lt( \sum_{1 \leq k \leq s} \|\nabla^k \oper f\|_{\mu}^2 + \|\nabla (a,b)\|_{H^{s-1}}^2\rt).
\eq
Finally, we again use similar arguments as the above to find
\bq\label{est_e3}
J_3 \leq C\epsilon_1 \lt( \sum_{1 \leq k \leq s} \|\nabla^k \oper f\|_{\mu}^2 + \|\nabla (a,b)\|_{H^{s-1}}^2\rt).
\eq
Hence by combining \eqref{est_e1}-\eqref{est_e3} we have
$$\begin{aligned}
\frac{d}{dt} \|\nabla f\|_{L^2_v(H^{s-1})}^2 + \lt(\lambda - C\epsilon_1\rt)\sum_{1 \leq k \leq s}\|\nabla^k \oper f\|_{\mu}^2 &\leq -\sum_{1 \leq k \leq s}\int_{\R^d} \frac{a}{1+a}|\nabla^k b|^2dx + C\e\|\nabla(a,b)\|_{H^{s-1}}^2\cr
&\leq C\epsilon_1\|\nabla(a,b)\|_{H^{s-1}}^2.
\end{aligned}$$
\end{proof}
We next provide the mixed space-velocity derivative of $\oper f$ in the following lemma.
\begin{lemma}\label{lem_energy2}
Let $d \geq 3$ and $s \geq 2[d/2] +2$, and let $T > 0$ be given. Suppose that $f\in \mc([0,T]; H^s(\R^d \times \R^d))$ is the solution to the equation \eqref{main_eq} satisfying
\[
\sup_{0 \leq t \leq T}\|f(t)\|_{H^s} \leq \epsilon_1 \ll 1.
\]
Then for fixed $1 \leq l \leq s$ we have
\begin{align}\label{energy2}
\begin{aligned}
&\frac{d}{dt} \sum_{0 \leq k \leq s - l} \|\nabla^k \nabla^l_v \oper f\|_{L^2}^2 + C_2 \sum_{0 \leq k \leq s - l}\|\nabla^k \nabla^l_v \oper f\|_\mu^2\cr
&\qquad \leq C\epsilon_1 \lt( \sum_{0 \leq k \leq s-l}\|\nabla^k \nabla^l_v \oper f\|_\mu^2 + \|\nabla(a,b)\|_{H^{s-1}}^2\rt)\cr
&\qquad \quad + C\lt( \sum_{0 \leq k \leq s+1 - l}\|\nabla^k \oper f\|_\mu^2 + \|\nabla(a,b)\|_{H^{s-l}}^2\rt) \cr
&\qquad \quad+ C\chi_{\{2 \leq l \leq s\}}\sum_{\substack{1 \leq \beta \leq l-1 \\ 0 \leq \alpha + \beta \leq s}}\|\nabla^\alpha \nabla^\beta_v \oper f\|_\mu^2,
\end{aligned}
\end{align}
for some positive constants $C_2, C > 0$. Here $\chi_A$ denotes the characteristic function of a set $A$.
\end{lemma}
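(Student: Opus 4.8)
The plan is to work with the evolution equation for the microscopic component $g := \oper f$. One checks that $\bl(v\sqrt{M}) = -v\sqrt{M}$, so that $\bl\bpp f = -b\cdot v\sqrt{M}$, and notes that every term of the form $(\text{function of }x)\cdot v\sqrt{M}$ lies in $\mathcal{N}$; using these facts \eqref{main_eq} can be rewritten as
\[
\pa_t g = \bl g - \oper(v\cdot\nabla g) - \oper(v\cdot\nabla\bpp f) - \oper(u_F\cdot\nabla_v f) + \tfrac12\oper(u_F\cdot vf),
\]
where the terms $b\cdot v\sqrt M$ and $\tfrac{ab}{1+a}\cdot v\sqrt M$ have dropped out via $\oper$ and the identity $\bl\bpp f=-b\cdot v\sqrt M$. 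Fixing $1\le l\le s$, I would apply $\nabla^k\nabla_v^l$ to this identity, pair it in $L^2(\R^d\times\R^d)$ with $\nabla^k\nabla_v^l g = \nabla^k\nabla_v^l\oper f$, integrate, and sum over $0\le k\le s-l$. Two cancellations keep the leading terms harmless: the diagonal streaming term $\langle v\cdot\nabla(\nabla^k\nabla_v^l g),\nabla^k\nabla_v^l g\rangle$ vanishes by antisymmetry in $x$, and the top-order velocity piece of $\oper(u_F\cdot\nabla_v f)$ drops out after pairing because $u_F$ does not depend on $v$.

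For the dissipative term I would commute $\nabla_v^l$ through $\bl = \Delta_v + \tfrac14(2d-|v|^2)$: the commutator $[\nabla_v^l,\bl]$ is a combination of $v\cdot\nabla_v^{l-1}$ and $\nabla_v^{l-2}$, which goes into the strictly-lower velocity-order terms on the right of \eqref{energy2} (the $\chi_{\{2\le l\le s\}}$ line, with the $l=1,2$ cases absorbed into $\sum_{0\le k\le s+1-l}\|\nabla^k\oper f\|_\mu^2$). The principal part is handled by the coercivity estimate $-\langle\bl h,h\rangle\ge\lambda|\oper h|_\mu^2 + |b^h|^2$ applied to $h=\nabla^k\nabla_v^l g$; re-projecting to recover $\|\nabla^k\nabla_v^l\oper f\|_\mu^2$ costs only a weighted $v$-moment of $\nabla^k g$ (integrate $\nabla_v^l$ by parts onto the Gaussian factor), which is bounded by $\|\nabla^k\oper f\|_\mu$ with $k\le s-l\le s+1-l$, so it too feeds $\sum_{0\le k\le s+1-l}\|\nabla^k\oper f\|_\mu^2$. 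The essential mechanism is the streaming commutator $[\nabla_v^l,v\cdot\nabla_x]g\sim\nabla_x\nabla_v^{l-1}g$: after $\nabla^k$ and Cauchy--Schwarz it produces $\|\nabla^{k+1}\nabla_v^{l-1}g\|_{L^2}$, which carries one more space and one fewer velocity derivative — it lands in the $\chi$-line for $l\ge2$ and in $\sum_{0\le k\le s+1-l}\|\nabla^k\oper f\|_\mu^2$ for $l=1$ — while the companion term $\nabla^k\nabla_v^l(v\cdot\nabla\bpp f)$, where $\bpp f=(a+b\cdot v)\sqrt M$ and the space derivative always falls on $(a,b)$, contributes $\|\nabla^{k+1}(a,b)\|_{L^2}$, i.e.\ the $\|\nabla(a,b)\|_{H^{s-l}}$ term. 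The projection corrections $\bpp(v\cdot\nabla g)$ and $\bpp(v\cdot\nabla\bpp f)$ are disposed of the same way, using $b^g=0$ so that only second $v$-moments of $g$ (carrying one space derivative) and $\nabla(a,b)$ survive.

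The nonlinear terms $-\oper(u_F\cdot\nabla_v f)+\tfrac12\oper(u_F\cdot vf)$ I would estimate by the Leibniz rule together with the Sobolev inequalities of Lemma \ref{lem_sob} and the macro bounds of Lemma \ref{lem_us}, writing $u_F=b/(1+a)$ so that $\|u_F\|_{L^\infty}+\|\nabla u_F\|_{H^{[d/2]}}\lesssim\|(a,b)\|_{H^s}+\|\nabla(a,b)\|_{H^{s-1}}$ and $\|(a,b)\|_{L^\infty}\lesssim\epsilon_1$; the highest-order contributions then give $C\epsilon_1\big(\sum_{0\le k\le s-l}\|\nabla^k\nabla_v^l\oper f\|_\mu^2+\|\nabla(a,b)\|_{H^{s-1}}^2\big)$, exactly in the spirit of the proof of Lemma \ref{lem_energy1}. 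The main obstacle — and the step requiring real care — is the bookkeeping: one must check that every $\epsilon_1$-free term produced involves only strictly lower velocity-order microscopic quantities (so that an induction on $l$, fed by the dissipation already purchased in Lemma \ref{lem_energy1}, closes), and that no uncontrolled $\|(a,b)\|_{L^2}$ survives, i.e.\ every purely macroscopic remainder carries at least one space derivative of $(a,b)$. The latter forces one to exploit the genuine nonlinearity of the $1/(1+a)$ normalization: the pieces $\oper(u_F\cdot\nabla_v\bpp f)$ and $\oper(\bpp(u_F\cdot\nabla_v f))$ are quadratic in the fluctuation $(a,b)$, and, paired (after integration by parts in $v$) with $v$-moments of $g$, are controlled through the quadratic bound $\|\nabla^k(b\otimes b/(1+a))\|_{L^2}\le C\epsilon_1\|\nabla^k(a,b)\|_{L^2}$ of Lemma \ref{lem_us}, together with the same cancellation among macroscopic projections that removed the $a|b|^2/(1+a)$ term in Lemma \ref{lem_energy1}.
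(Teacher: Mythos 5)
Your proposal follows essentially the same route as the paper's proof: project the equation onto $\mathcal{N}^\perp$, take $\nabla^k\nabla_v^l$ and pair with $\nabla^k\nabla_v^l\oper f$, use the coercivity of $\bl$ after commuting $\nabla_v^l$ through it, and track the streaming commutator $[\nabla_v^l, v\cdot\nabla]$, the macroscopic correction terms $\bpp(v\cdot\nabla\oper f + u_F\cdot\nabla_v\oper f)$ and $\oper(v\cdot\nabla\bpp f + u_F\cdot\nabla_v\bpp f)$, and the nonlinear pieces via Lemmas~\ref{lem_sob} and~\ref{lem_us}. The paper organizes these into nine contributions $K_1,\dots,K_9$ but the structure and closing mechanism (one fewer $v$-derivative traded for one more $x$-derivative, $\epsilon_1$-smallness absorbing top-order nonlinear terms, the full sum over $l$ later closed by a weighted linear combination) are exactly what you describe; your closing paragraph invokes the $a|b|^2/(1+a)$ cancellation from Lemma~\ref{lem_energy1}, which is not actually needed here — the paper simply bounds the quadratic macroscopic remainders crudely via $\|(a,b)\|_{H^s}\le\epsilon_1$ — but this is an unnecessary aside rather than a gap.
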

\begin{proof}By applying $\oper$ to the equation \eqref{main_eq}, we find
\[
\pa_t \oper f + \oper \lt(v \cdot \nabla f + u_F \cdot \nabla_v f \rt) = \oper \bl f + \oper \Gamma (f,f).
\]
Since $\{\bi - \bpp\}(v\sqrt{M}) = 0$, we get
$$\begin{aligned}
\oper \Gamma(f,f) &= \frac12 u_F \cdot v \oper f + \frac12 u_F \cdot [\oper,v]f\cr
&= \frac12 u_F \cdot v \oper f + \frac12 u_F \cdot [v,\bpp]f.
\end{aligned}$$
This deduces
$$\begin{aligned}
&\pa_t \oper f + v \cdot \nabla \oper f + u_F \cdot \nabla_v \oper f\cr
&\qquad = \bl \oper f + \frac12 u_F \cdot v \oper f + \frac12 u_F \cdot [v,\bpp]f\cr
&\qquad \quad + \bpp \lt( v \cdot \nabla \oper f + u_F \cdot \nabla_v \oper f\rt) - \oper \lt(v \cdot \nabla \bpp f + u_F \cdot \nabla_v \bpp f \rt),
\end{aligned}$$
where we used $\oper \bl = \bl \oper$.
Then we have for $0 \leq k + l \leq s$ with fixed $1 \leq l \leq s$
$$\begin{aligned}
&\frac12\frac{d}{dt}\|\nabla^k \nabla_v^l \oper f\|_{L^2}^2 \cr
& = -\int_{\R^d} \lal \nabla^k\nabla_v^l(v \cdot \nabla \oper f) + \nabla^k\nabla_v^l(u_F \cdot \nabla \oper f), \nabla^k \nabla_v^l \oper f \ral\,dx\cr
&\quad +\int_{\R^d} \lal \nabla^k\nabla_v^l(\bl\oper f) + \frac12  \nabla^k\nabla_v^l(u_F \cdot v\oper f) + \frac12\nabla^k\nabla_v^l(u_F \cdot [v,\bpp] f), \nabla^k \nabla_v^l \oper f \ral\,dx\cr
&\quad + \int_{\R^d} \lal \nabla^k\nabla_v^l(\bpp\lt( v \cdot \nabla\oper f + u_F \cdot \nabla_v \oper f\rt)), \nabla^k \nabla_v^l \oper f \ral\,dx\cr
&\quad + \int_{\R^d} \lal \nabla^k\nabla_v^l(\oper \lt( v \cdot \nabla \bpp f + u_F \cdot \nabla_v \bpp f\rt)), \nabla^k \nabla_v^l \oper f \ral\,dx\cr
&=:\sum_{i=1}^9 K_i.
\end{aligned}$$
$\diamond$ Estimate of $K_1$: A straightforward computation yields
$$\begin{aligned}
K_1 &= -\int_{\R^d} \lal \nabla^k [\nabla^l_v,v \cdot \nabla]\oper f, \nabla^k \nabla_v^l \oper f \ral\,dx\cr
&\leq C\|[\nabla^l_v,v \cdot \nabla]\nabla^k \oper f\|_{L^2}^2 + \delta \|\nabla^k \nabla_v^l \oper f\|_{L^2}^2\cr
&\leq C\sum_{0 \leq \alpha \leq s - l}\|\nabla^{\alpha+1}\oper f\|_{L^2}^2 + C\chi_{\{2 \leq l \leq s\}}\sum_{\substack{ 1 \leq \beta \leq l-1 \\ 0 \leq \alpha + \beta \leq s}}\|\nabla^\alpha \nabla^\beta_v\oper f\|_{L^2}^2 \cr
&\quad + \delta \|\nabla^k \nabla_v^l \oper f\|_{L^2}^2,
\end{aligned}$$
where $\delta>0$ is a positive constant which will be determined later, and $[\cdot,\cdot]$ denotes the commutator operator, i.e., $[A,B]:=AB-BA$. \newline

\noindent $\diamond$ Estimate of $K_2$: Similar to \eqref{u_est1}, we find
$$\begin{aligned}
K_2 &= \sum_{0 \leq \alpha < k}\binom{k}{\alpha}\int_{\R^d} \lal \nabla^{k-\alpha} u_F \cdot \nabla_v \nabla^\alpha\nabla^l_v\oper f, \nabla^k \nabla_v^l \oper f \ral\,dx\cr
&\leq C\sum_{0 \leq \alpha < k} \int_{\R^d} |\nabla^{k-\alpha} u_F|\| \nabla^\alpha\nabla^{l+1}_v\oper f \|_{L^2_v}\|\nabla^k \nabla_v^l \oper f \|_{L^2_v}\,dx\cr
&\leq C\|\nabla (a,b)\|_{H^{s-1}}\sum_{0 \leq k + l \leq s}\|\nabla^k \nabla^l_v \oper f\|_{L^2}^2\cr
&\leq C\epsilon_1\sum_{0 \leq k + l \leq s}\|\nabla^k \nabla^l_v \oper f\|_{L^2}^2.
\end{aligned}$$
$\diamond$ Estimate of $K_3$: Since 
\[
[\nabla^l_v, \bl] = [\nabla^l_v, \Delta_v] + [\nabla^l_v, \frac14(2d - |v|^2)] = [\nabla^l_v, -|v|^2],
\]
we obtain
$$\begin{aligned}
K_3 &= \int_{\R^d} \lal \nabla^k [\nabla^l_v, \bl]\oper f, \nabla^k \nabla_v^l \oper f\ral\,dx - \int_{\R^d} \lal \bl \nabla^k \nabla_v^l \oper f, \nabla^k \nabla_v^l \oper f\ral\,dx\cr
&= \int_{\R^d} \lal \nabla^k [\nabla^l_v, -|v|^2]\oper f, \nabla^k \nabla_v^l \oper f\ral\,dx - \int_{\R^d} \lal \bl \nabla^k \nabla_v^l \oper f, \nabla^k \nabla_v^l \oper f\ral\,dx.
\end{aligned}$$
On the other hand, the first term of the above equality can be estimated as
$$\begin{aligned}
&\int_{\R^d} \lal \nabla^k [\nabla^l_v, -|v|^2]\oper f, \nabla^k \nabla_v^l \oper f\ral\,dx\cr
&\quad \leq  C\|[\nabla^l_v, -|v|^2]\nabla^k \oper f\|_{L^2}^2 + \delta \|\nabla^k \nabla_v^l \oper f\|_{L^2}^2\cr
&\quad \leq C\sum_{0 \leq \alpha \leq s - l}\|\nabla^{\alpha+1}\oper f\|_{\mu}^2 + C\chi_{\{2 \leq l \leq s\}}\sum_{\substack{ 1 \leq \beta \leq l-1 \\ 0 \leq \alpha + \beta \leq s}}\|\nabla^\alpha \nabla^\beta_v\oper f\|_{\mu}^2 \cr
&\qquad + \delta \|\nabla^k \nabla_v^l \oper f\|_{L^2}^2.
\end{aligned}$$
$\diamond$ Estimates of $K_4$ and $K_5$: Similar to the estimate of $K_2$, we get
$$\begin{aligned}
K_4 & \ls\sum_{0 \leq \alpha \leq k}\int_{\R^d} |\nabla^{k - \alpha} u_F| \|\nabla^\alpha\nabla^l_v(v \oper f)\|_{L^2_v} \|\nabla^k\nabla^l_v \oper f\|_{L^2_v}\,dx\cr
&\ls\|(a,b)\|_{H^s} \sum_{0 \leq \alpha + \beta \leq s} \|\nabla^\alpha \nabla^\beta_v\oper f\|_{\mu}^2  \cr
&\leq C\epsilon_1\sum_{0 \leq \alpha + \beta \leq s} \|\nabla^\alpha \nabla^\beta_v\oper f\|_{\mu}^2,\cr
K_5 &\ls \sum_{0 \leq \alpha \leq k} \int_{\R^d} | \nabla^{k-\alpha} u_F|\|\nabla^l_v([v,\bpp]\nabla^\alpha f)\|_{L^2_v}\|\nabla^k \nabla^l_v \oper f\|_{L^2_v}\,dx\cr
&\ls \|(a,b)\|_{H^s}\lt(\|\nabla(a,b)\|_{H^{s-1}}^2 + \|\nabla^k \nabla^l_v\oper f\|_{L^2}^2\rt)\cr
&\leq C\epsilon_1\lt(\|\nabla(a,b)\|_{H^{s-1}}^2 + \|\nabla^k \nabla^l_v\oper f\|_{L^2}^2\rt).
\end{aligned}$$
$\diamond$ Estimates of $K_i,i=6,\cdots,9$: Similarly, we have
$$\begin{aligned}
K_6 + K_7 &\leq C\sum_{0 \leq \alpha \leq s - l}\|\nabla^{\alpha + 1}\oper f\|_{L^2}^2 + C\|(a,b)\|_{H^s}\sum_{0 \leq \alpha + \beta \leq s}\|\nabla^\alpha\nabla^\beta_v \oper f\|_{L^2}^2\cr
&\quad + \delta\|\nabla^k\nabla^l_v \oper f\|_{L^2}^2\cr
&\leq C\sum_{0 \leq \alpha \leq s - l}\|\nabla^{\alpha + 1}\oper f\|_{L^2}^2 + C\epsilon_1\sum_{0 \leq \alpha + \beta \leq s}\|\nabla^\alpha\nabla^\beta_v \oper f\|_{L^2}^2\cr
&\quad + \delta\|\nabla^k\nabla^l_v \oper f\|_{L^2}^2,\cr
K_8 + K_9 &\leq C\|\nabla(a,b)\|_{H^{s-l}}^2 + C\|(a,b)\|_{H^s}\lt(\|\nabla (a,b)\|_{H^{s-1}}^2 + \|\nabla^k \nabla^l_v \oper f\|_{L^2}^2\rt)\cr
&\quad + \delta \|\nabla^k \nabla^l_v \oper f\|_{L^2}^2\cr
&\leq C\|\nabla(a,b)\|_{H^{s-l}}^2 + C\epsilon_1\lt(\|\nabla (a,b)\|_{H^{s-1}}^2 + \|\nabla^k \nabla^l_v \oper f\|_{L^2}^2\rt)+ \delta \|\nabla^k \nabla^l_v \oper f\|_{L^2}^2.
\end{aligned}$$
\end{proof}

\subsection{Macro-micro decomposition}\label{sec_mmd} 
In this part, we derive the hyperbolic-parabolic system which the macro components $a$ and $b$ satisfy. For this, we use the local conservation law of mass and the local balance law of momentum. More precisely, we multiply \eqref{main_eq} by $\sqrt{M}$ and $v_i\sqrt{M}$ for $1 \leq i \leq d$ and take the velocity integration over $\R^d$ to get
\begin{align}\label{mac_1}
\begin{aligned}
&\pa_t a + \nabla \cdot b = 0,\cr
&\pa_t b_i + \pa_i a + \sum_{j=1}^d \pa_j A_{ij}(\{ \bi - \bpp\}f) = 0.
\end{aligned}
\end{align}
Furthermore, we rewrite \eqref{main_eq} as
\bq\label{mac_2}
\pa_t \bpp f + v \cdot \nabla \bpp f + u_F \cdot \nabla_v \bpp f - \frac12 u_F \cdot v \bpp f - (u_F - b)\cdot v \sqrt{M} = -\pa_t \oper f + \ell + r,
\eq
where
$$\begin{aligned}
\ell &= - v \cdot \nabla \{\bi - \bpp \}f + \bl\{ \bi - \bpp\}f,\cr
r &= -\frac{b}{1 + a} \cdot \nabla_v \{ \bi - \bpp\} f + \frac{b}{2(1 + a)} \cdot \{ \bi - \bpp\}f.
\end{aligned}$$
Then we now apply the moment functional $A_{ij}(g) := \lal (v_i v_j - 1)\sqrt{M}, g \ral$ for $1 \leq i,j \leq d$ to \eqref{mac_2} to deduce
\bq\label{mac_3}
\pa_t A_{ij}(\{ \bi - \bpp\}f) + \pa_i b_j + \pa_j b_i - \frac{2b_i b_j}{1 + a} = A_{ij}(\ell + r).
\eq
The similar derivation of the system \eqref{mac_1}-\eqref{mac_3} is used for the study of collisional kinetic equations \cite{Duan1,Guo}. Using the system \eqref{mac_1}-\eqref{mac_3}, we find a temporal energy functional which has the dissipation rate $\|\nabla(a,b)\|_{H^{s-1}}^2$. Before we show it, we first provide the estimates of the term in the right hand side of the equation \eqref{mac_3} in the lemma below.
\begin{lemma}\label{lem_er}Let $d \geq 3$ and $s \geq 2[d/2] +2$, and let $T > 0$ be given. Suppose that $f\in \mc([0,T]; H^s(\R^d \times \R^d))$ is the solution to the equation \eqref{main_eq} satisfying
\[
\sup_{0 \leq t \leq T}\|f(t)\|_{H^s} \leq \epsilon_1 \ll 1.
\]
Then there exists a positive constant $C>0$ such that 
\bq\label{est_mac3_1}
\sum_{ 0 \leq k \leq s-1}\|\nabla^k A_{ij}(\ell)\|_{L^2} \leq C\sum_{0\leq k \leq s} \|\nabla^k \{ \bi - \bpp\}f\|_{L^2},
\eq
and
\bq\label{est_mac3_2}
\sum_{0 \leq k \leq s-1}\|\nabla^k A_{ij}(r)\|_{L^2} \leq C\epsilon_1\sum_{0 \leq k \leq s}\|\nabla^k \{ \bi - \bpp\}f\|_{L^2}.
\eq
\end{lemma}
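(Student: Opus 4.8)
The plan is to estimate the moments $A_{ij}(\ell)$ and $A_{ij}(r)$ by unwinding the definitions of $\ell$ and $r$ from \eqref{mac_2} and using that $A_{ij}(g) = \langle (v_iv_j-1)\sqrt M, g\rangle$ is a velocity integral against a function with Gaussian weight, hence controlled by the $L^2_v$-norm with polynomial weight $\mu$, i.e. by $|\cdot|_\mu$. The key point is that the weight $(v_iv_j-1)\sqrt M$ together with $\nabla_v$ and $|v|^2$ factors appearing in $\ell$ and $r$ still produces an integrable Gaussian-type kernel, so that each moment of a function $g$ is bounded by $C|g|_\mu$ (after an integration by parts in $v$ to move the $\nabla_v$ off $\{\bi-\bpp\}f$ onto the smooth weight). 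Summing over $x$-derivatives $\nabla^k$ for $0\le k\le s-1$ then gives the stated bounds.

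First, for \eqref{est_mac3_1}: since $\ell = -v\cdot\nabla\{\bi-\bpp\}f + \bl\{\bi-\bpp\}f$, I would write $A_{ij}(\ell) = -\sum_m \partial_m \langle v_m(v_iv_j-1)\sqrt M, \{\bi-\bpp\}f\rangle + \langle (v_iv_j-1)\sqrt M, \bl\{\bi-\bpp\}f\rangle$. The first term is a first-order $x$-derivative of a velocity moment, so applying $\nabla^k$ gives a moment of $\nabla^{k+1}\{\bi-\bpp\}f$, bounded by $C\|\nabla^{k+1}\{\bi-\bpp\}f\|_{L^2}$; since $k+1\le s$ this is absorbed into the right-hand side of \eqref{est_mac3_1}. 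For the second term I would use the self-adjointness structure of $\bl$ (or simply write $\bl g = \Delta_v g + \frac14(2d-|v|^2)g$ and integrate by parts twice in $v$) to transfer the $v$-derivatives onto $(v_iv_j-1)\sqrt M$, which remains a Schwartz function; this produces a moment of $\{\bi-\bpp\}f$ itself (no extra $x$-derivative), bounded by $C\||\{\bi-\bpp\}f|_\mu\|$, hence by $C\|\nabla^k\{\bi-\bpp\}f\|_{L^2}$ after taking $\nabla^k$. Summing over $0\le k\le s-1$ yields \eqref{est_mac3_1}.

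Next, for \eqref{est_mac3_2}: $r = -\frac{b}{1+a}\cdot\nabla_v\{\bi-\bpp\}f + \frac{b}{2(1+a)}\cdot\{\bi-\bpp\}f$ is a product of the macroscopic factor $\frac{b}{1+a}$ (a function of $x$ only) with a velocity-differential expression in $\{\bi-\bpp\}f$. Applying $A_{ij}$ commutes with multiplication by the $x$-function, so $A_{ij}(r) = -\frac{b}{1+a}\cdot\langle\nabla_v(v_iv_j-1)\sqrt M\,\text{-type weight}, \{\bi-\bpp\}f\rangle + \frac{b}{2(1+a)}\cdot A_{ij}(\{\bi-\bpp\}f)$ after one $v$-integration by parts on the first term; both resulting velocity moments are controlled by $C|\{\bi-\bpp\}f|_\mu$. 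Then I would apply $\nabla^k$ and use the Leibniz rule, distributing derivatives between $\frac{b}{1+a}$ and the moment. For the terms where all derivatives (or all but $[d/2]$ of them) land on $\frac{b}{1+a}$, Lemma \ref{lem_us} gives $\|\nabla^j(\frac{b}{1+a})\|_{L^2}\le C\|\nabla^j(a,b)\|_{L^2}\le C\epsilon_1$ when $j\ge 1$ and $\|\frac{b}{1+a}\|_{L^\infty}\le C\epsilon_1$ when $j=0$; combining with the Sobolev embedding of Lemma \ref{lem_sob} and the assumed smallness $\|f\|_{H^s}\le\epsilon_1$, every term carries a factor $\epsilon_1$. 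This gives \eqref{est_mac3_2}.

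The main obstacle is purely bookkeeping: one must be careful that in \eqref{est_mac3_2} the smallness factor $\epsilon_1$ genuinely appears in front of \emph{every} term of the Leibniz expansion — including the term where no $x$-derivative hits $\frac{b}{1+a}$, which is why the $L^\infty$ bound $\|\frac{b}{1+a}\|_{L^\infty}\le C\epsilon_1$ from Lemma \ref{lem_us} is essential — and that the $v$-integrations by parts against $(v_iv_j-1)\sqrt M$ never produce a weight growing faster than the Gaussian decay can absorb, so that all velocity moments reduce to the single quantity $|\{\bi-\bpp\}f|_\mu$. Once these two points are checked, summing in $k$ and in $x$ is routine.
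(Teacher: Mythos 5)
Your overall strategy---unwinding $A_{ij}(\ell)$ and $A_{ij}(r)$ by writing the $v\cdot\nabla$ term as an $x$-divergence of a moment, integrating by parts in $v$ to push all velocity derivatives and the $|v|^2$ factor from $\bl$ onto the Schwartz weight $(v_iv_j-1)\sqrt M$, then handling the $x$-Leibniz expansion with the $L^\infty$/$L^2$ split and Lemma~\ref{lem_us}---is exactly the mechanism the paper uses (the paper delegates \eqref{est_mac3_1} to \cite{CDM} and for \eqref{est_mac3_2} writes down precisely your Leibniz expansion). You also correctly identify that the $\epsilon_1$ factor must be visible in every Leibniz term, including $l=k$, which is why $\|b/(1+a)\|_{L^\infty}\ls\epsilon_1$ matters.

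One step is written backwards, though, and should be fixed before you rely on it. You repeatedly assert that after integration by parts the moment of $g=\nabla^k\oper f$ is ``bounded by $C\|\,|g|_\mu\,\|$, hence by $C\|g\|_{L^2}$.'' That implication is false: by definition $|g|_\mu^2=\int(|\nabla_v g|^2+\mu|g|^2)\,dv\ge\|g\|_{L^2_v}^2$ since $\mu\ge1$, so the $\mu$-norm is \emph{stronger} than the $L^2$-norm and cannot be used to deduce the $L^2$ bound the lemma claims. The good news is that your own integration-by-parts idea already gives the correct, sharper statement directly: once every $\nabla_v$ and every $|v|^2$ has been moved onto the weight, one is left with $\lal w, g\ral$ for a fixed Schwartz function $w(v)$, hence $|\lal w,g\ral|\le\|w\|_{L^2_v}\|g\|_{L^2_v}\ls\|g\|_{L^2_v}$ pointwise in $x$---no $\mu$-weight, no $\nabla_v g$ survives. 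You should simply delete the $|\cdot|_\mu$ detour and go straight to the $L^2_v$ bound; the same remark applies to your treatment of the $-\tfrac{b}{1+a}\cdot\nabla_v\oper f$ piece of $r$. A second, minor point: the $L^\infty$ bound $\|b/(1+a)\|_{L^\infty}\ls\epsilon_1$ is not literally part of Lemma~\ref{lem_us}; it follows from Lemma~\ref{lem_us} combined with the Sobolev embedding of Lemma~\ref{lem_sob}(ii) and the smallness hypothesis (this is how the paper obtains $\|u_F\|_{L^\infty}\ls\|\nabla(a,b)\|_{H^{s-1}}$). With these two corrections the bookkeeping closes and the argument is complete.
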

\begin{proof} The estimate \eqref{est_mac3_1} can be obtained by using similar arguments as in \cite{CDM}. For the estimate \eqref{est_mac3_2}, we obtain
$$\begin{aligned}
\nabla^k A_{ij}(r) &=A_{ij}(\nabla^k r)\cr
&=\lt\lal (v_i v_j - 1)\sqrt{M}, \nabla^k \lt(\frac{b\cdot v}{2(1+a)}\{ \bi - \bpp\}f - \frac{b}{1+a}\cdot \nabla_v \{ \bi - \bpp\}f \rt) \rt\ral\cr
&=\sum_{0 \leq l\leq k} \binom{k}{l}\lt\lal (v_iv_j - 1)\sqrt{M}, \frac12\nabla^{k - l}\lt(\frac{b}{1+a}\rt)\cdot v\nabla^l \{ \bi - \bpp\}f \rt\ral\cr
&\quad - \sum_{0 \leq l\leq k} \binom{k}{l}\lt\lal (v_iv_j - 1)\sqrt{M}, \nabla^{k - l}\lt(\frac{b}{1+a}\rt)\cdot \nabla_v \nabla^l \{ \bi - \bpp\}f \rt\ral\cr
&\ls \sum_{0 \leq l \leq k}\lt\|\nabla^{k-l}\lt(\frac{b}{1+a}\rt)\rt\|_{L^2}\|\nabla^l \{ \bi - \bpp\}f\|_{L^2}.
\end{aligned}$$
This yields 
$$\begin{aligned}
\sum_{0\leq k \leq s-1}\|\nabla^k A_{ij}(r)\|_{L^2} &\leq C\lt\|\nabla\lt(\frac{b}{1+a} \rt) \rt\|_{H^{s-1}}\sum_{0 \leq k \leq s}\|\nabla^k \{ \bi - \bpp\}f\|_{L^2}\cr
&\leq C\epsilon_1\sum_{0 \leq k \leq s}\|\nabla^k \{ \bi - \bpp\}f\|_{L^2},
\end{aligned}$$
where we used the estimate in Lemma \ref{lem_us}.
\end{proof}
Inspired by Kawashima's hyperbolic-parabolic dissipation arguments \cite{Kawa}, we introduce the temporal energy functional $\mathcal{E}_0(f)$ by
\[
\mathcal{E}_0(f) := \sum_{0 \leq k \leq s-1}\sum_{1 \leq i,j \leq d}\int_{\R^d} \nabla^k (\pa_i b_j + \pa_j b_i)\nabla^k A_{ij}(\oper f)\,dx - \sum_{0\leq k \leq s-1}\int_{\R^d} \nabla^k a \nabla^k \nabla \cdot b\,dx.
\]

\begin{lemma}\label{lem_energy3}Let $d \geq 3$ and $s \geq 2[d/2] +2$, and let $T > 0$ be given. Suppose that $f\in \mc([0,T]; H^s(\R^d \times \R^d))$ is the solution to the equation \eqref{main_eq} satisfying
\[
\sup_{0 \leq t \leq T}\|f(t)\|_{H^s} \leq \epsilon_1 \ll 1.
\]
Then there exist positive constants $C_3, C > 0$ such that
\[
\frac{d}{dt}\mathcal{E}_0(f(t)) + C_3 \|\nabla (a,b)\|_{H^{s-1}}^2 \leq C\|\oper f\|_{L^2_v(H^s)}^2.
\]
\end{lemma}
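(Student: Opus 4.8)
The plan is to differentiate $\mathcal{E}_0(f(t))$ in time using the macro-micro system \eqref{mac_1}--\eqref{mac_3} derived in Section \ref{sec_mmd}, and to extract from the cross-terms a full dissipation $\|\nabla(a,b)\|_{H^{s-1}}^2$ in the spirit of Kawashima's hyperbolic-parabolic compensating-function argument. Concretely, I would first handle the second piece $-\sum_k \int \nabla^k a\, \nabla^k \nabla\cdot b\,dx$: applying $\pa_t$ and substituting the mass law $\pa_t a = -\nabla\cdot b$ produces a good term $+\sum_k\|\nabla^k \nabla\cdot b\|_{L^2}^2$, while substituting the momentum law $\pa_t b_i = -\pa_i a - \sum_j \pa_j A_{ij}(\oper f)$ into the other copy of $\pa_t b$ produces $\sum_k\|\nabla^{k+1}a\|_{L^2}^2$ (the wanted $a$-dissipation) plus a term $\sum_k \int \nabla^{k+1}a\cdot\nabla^k\nabla A_{ij}(\oper f)$ that is absorbed by Young's inequality into $\delta\|\nabla^{k+1}a\|_{L^2}^2 + C_\delta\|\oper f\|_{L^2_v(H^s)}^2$. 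For the first piece I would differentiate and use \eqref{mac_3} to replace $\pa_t A_{ij}(\oper f)$ by $-\pa_i b_j - \pa_j b_i + \tfrac{2b_ib_j}{1+a} + A_{ij}(\ell+r)$; pairing this against $\nabla^k(\pa_i b_j + \pa_j b_i)$ yields the key negative definite term $-c\sum_k\|\nabla^{k+1}b\|_{L^2}^2$ (using that the symmetric gradient controls the full gradient on $\R^d$, or equivalently handling the trace part via the already-extracted $\nabla\cdot b$ term), while the remaining contributions $\int \nabla^k(\pa_i b_j+\pa_j b_i)\,\nabla^k A_{ij}(\ell+r)$ and the $\pa_t b$-on-the-$\oper f$-slot term are controlled using Lemma \ref{lem_er} together with Young's inequality, at the cost of $\delta\|\nabla^{k+1}b\|_{L^2}^2$ and $C\|\oper f\|_{L^2_v(H^s)}^2$ (note $\|\nabla^k A_{ij}(\ell)\|_{L^2}\ls \sum_{k'\le s}\|\nabla^{k'}\oper f\|_{L^2} \le \|\oper f\|_{L^2_v(H^s)}$, and the $r$- and $b\otimes b$-terms carry an extra $\epsilon_1$ which makes them harmless).

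After collecting terms, I expect an estimate of the shape
\[
\frac{d}{dt}\mathcal{E}_0(f) + c\bigl(\|\nabla a\|_{H^{s-1}}^2 + \|\nabla b\|_{H^{s-1}}^2\bigr) \leq \delta\|\nabla(a,b)\|_{H^{s-1}}^2 + C_\delta\|\oper f\|_{L^2_v(H^s)}^2,
\]
and then fix $\delta$ small to obtain the claimed inequality with $C_3 = c - \delta > 0$. The one subtlety requiring care is that the pairing against the symmetrized gradient $\pa_i b_j + \pa_j b_i$ only directly controls the symmetric part of $\nabla b$; recovering the full $\|\nabla b\|_{H^{s-1}}^2$ uses that $\sum_{i,j}\|\pa_i b_j + \pa_j b_i\|_{L^2}^2 = 2\|\nabla b\|_{L^2}^2 + 2\|\nabla\cdot b\|_{L^2}^2 \ge 2\|\nabla b\|_{L^2}^2$ after integration by parts on $\R^d$, so no information is actually lost.

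The main obstacle is bookkeeping rather than conceptual: one must make sure every error term generated by the nonlinearities (the $u_F\cdot\nabla_v$ transport term hidden inside $r$, and the quadratic $b\otimes b/(1+a)$ term in \eqref{mac_3}) is either cubic/quartic — hence bounded by $\epsilon_1$ times the dissipation via Lemma \ref{lem_us} — or purely microscopic — hence absorbed into the right-hand side $\|\oper f\|_{L^2_v(H^s)}^2$. Since we only need the inequality up to a multiplicative constant and there is no smallness required on the right-hand side, the $\oper f$ contributions may be estimated crudely; the delicate point is keeping the coefficient in front of $\|\nabla(a,b)\|_{H^{s-1}}^2$ positive, which is guaranteed by choosing the Young's-inequality parameter $\delta$ after all structural (sign-definite) terms have been identified. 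I would organize the computation index-by-index for $0 \le k \le s-1$ and sum at the end, exactly as in \cite{Kawa, Duan1, Guo, CDM}.
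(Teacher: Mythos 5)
Your proposal follows essentially the same route as the paper: differentiate $\mathcal{E}_0$, substitute the macroscopic balance laws \eqref{mac_1}--\eqref{mac_3}, use the identity $\sum_{i,j}\|\nabla^k(\pa_i b_j+\pa_j b_i)\|_{L^2}^2 = 2\|\nabla^{k+1}b\|_{L^2}^2+2\|\nabla\cdot\nabla^k b\|_{L^2}^2$ to extract the $b$-dissipation, invoke Lemma \ref{lem_er} (together with Lemma \ref{lem_us} for the $b\otimes b/(1+a)$ term) for the error terms, and close with Young's inequality; the paper just organizes the product rule the other way around (starting from the target dissipation and pulling out $\tfrac{d}{dt}\mathcal{E}_0$). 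The only slip is calling the $+\|\nabla^k\nabla\cdot b\|_{L^2}^2$ contribution a ``good term'' — it in fact enters with a sign that makes it an error to be absorbed by the $\|\nabla\cdot b\|^2$ part of the symmetric-gradient identity — but your own accounting via that identity handles it correctly, so the argument goes through.
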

\begin{proof} We first notice that
\[
\sum_{1 \leq i,j \leq d} \|\nabla^k (\pa_i b_j + \pa_j b_i)\|_{L^2}^2 = 2\|\nabla^{k+1}b\|_{L^2}^2 + 2\|\nabla \cdot \nabla^kb\|_{L^2}^2.
\]
On the other hand, it also follows from \eqref{mac_3} that
$$\begin{aligned}
\sum_{1 \leq i,j \leq d}\|\nabla^k (\pa_i b_j + \pa_j b_i)\|_{L^2}^2 &= -\frac{d}{dt}\sum_{1 \leq i,j \leq d}\int_{\R^d} \nabla^k (\pa_i b_j + \pa_j b_i)\nabla^k A_{ij}(\{ \bi - \bpp\}f)\,dx\cr
&\quad + \sum_{1 \leq i,j \leq d}\int_{\R^d} \nabla^k (\pa_i \pa_t b_j + \pa_j \pa_t b_i)\nabla^kA_{ij}(\{ \bi - \bpp\}f)\,dx\cr
&\quad + \sum_{1 \leq i,j \leq d}\int_{\R^d} \nabla^k (\pa_i b_j + \pa_j b_i)\nabla^k\lt( \frac{2b_ib_j}{1 + a}  + A_{ij}(\ell + r)\rt)dx\cr
&=: \sum_{i=1}^3 J_i,
\end{aligned}$$
where $J_i,i=2,3$ are estimated as follows.
$$\begin{aligned}
J_2 &= 2\sum_{1 \leq i,j \leq d}\int_{\R^d} \nabla^k \Big(\pa_i a + \sum_{1\leq l \leq d}\pa_l A_{il}(\{ \bi - \bpp\}f) \Big) \nabla^k \pa_j A_{ij}(\{ \bi - \bpp\}f)\,dx\cr
&\leq \epsilon_1\|\nabla a\|_{H^{s-1}}^2 + C\|\nabla \{ \bi - \bpp\}f\|_{L^2_v(H^{s-1})}^2,\cr
J_3 &\leq \frac12 \sum_{1 \leq i,j \leq d}\|\nabla^k (\pa_i b_j + \pa_j b_i)\|_{L^2}^2 + C\sum_{1 \leq i,j \leq d}\lt\| \frac{2b_i b_j}{1+a}\rt\|_{H^{s-1}}^2 + C\|\{ \bi - \bpp\}f\|_{L^2_v(H^s)}^2\cr
&\leq \frac12 \sum_{1 \leq i,j \leq d}\|\nabla^k (\pa_i b_j + \pa_j b_i)\|_{L^2}^2 + C\epsilon_1\|\nabla b\|_{H^{s-1}}^2 + C\|\{ \bi - \bpp\}f\|_{L^2_v(H^s)}^2.
\end{aligned}$$
Here, we used the equation $\eqref{mac_1}_2$ for $J_2$ and the estimates in Lemma \ref{lem_er} for $J_3$.
Thus, we obtain
$$\begin{aligned}
&\frac{d}{dt}\sum_{0 \leq k \leq  s-1}\sum_{1 \leq i, j \leq d}\int_{\R^d} \nabla^k (\pa_i b_j + \pa_j b_i) \nabla^k A_{ij}(\oper f)\,dx + \|\nabla b\|_{H^{s-1}}^2 + \|\nabla \cdot  b\|_{H^{s-1}}^2\cr
&\quad \leq \epsilon_1\|\nabla a\|_{H^{s-1}}^2 + C\|\oper f\|_{L^2_v(H^s)}^2 + C\epsilon_1 \|\nabla b\|_{H^{s-1}}^2.
\end{aligned}$$
We again use $\eqref{mac_1}$ to get the dissipation rate of $\|\nabla a\|_{H^{s-1}}$. By taking $\nabla^k$ to $\eqref{mac_1}_2$ and multiplying it by $\nabla^k \pa_i a$, we find
$$\begin{aligned}
\|\nabla^{k+1} a\|_{L^2}^2 &= \frac{d}{dt} \int_{\R^d} \nabla^k a \cdot \nabla^k \nabla \cdot b\,dx + \sum_{1 \leq i \leq d} \int_{\R^d} \nabla^k \pa_i \pa_t a \nabla^k b_i\,dx\cr
&\quad - \sum_{1 \leq i,j \leq d} \int_{\R^d} \nabla^k \pa_i a \cdot \pa_j \nabla^k A_{ij}(\oper f)\,dx\cr
&\leq \frac{d}{dt} \int_{\R^d} \nabla^k a \cdot \nabla^k \nabla \cdot b\,dx + \frac12 \|\nabla^{k+1} a\|_{L^2}^2 + \|\nabla^k \nabla \cdot b\|_{L^2}^2 + C\|\nabla \oper f\|_{L^2_v(H^{s-1})}^2,
\end{aligned}$$
due to the local conservation law of mass $\eqref{mac_1}_1$. This implies
\[
-\frac{d}{dt}\int_{\R^d} \nabla^k a \cdot \nabla^k \nabla \cdot b\,dx + \frac12 \|\nabla^{k+1} a\|_{L^2}^2 \leq  \|\nabla^k \nabla \cdot b\|_{L^2}^2 + C\|\nabla \oper f\|_{L^2_v(H^{s-1})}^2.
\]
Now we combine all the estimates above to have
\[
\frac{d}{dt}\mathcal{E}_0(f) + \|\nabla b\|_{H^{s-1}}^2 + \lt(\frac12 - \epsilon_1\rt)\|\nabla a\|_{H^{s-1}}^2 \leq C\epsilon_1\|\nabla b\|_{H^{s-1}}^2 + C\|\oper f\|_{L^2_v(H^s)}^2.
\]
\end{proof}
\begin{remark}\label{rmk_energy3} Using the definition of $\mathcal{E}_0(f)$, we find 
$$\begin{aligned}
\mathcal{E}_0(f) &\ls \sum_{0 \leq k \leq s}\lt( \|\nabla^k \oper f\|_{L^2}^2 + \|\nabla^k (a,b)\|_{L^2}^2\rt)\cr
&\ls \sum_{0 \leq k \leq s}\lt( \|\nabla^k \oper f\|_{L^2}^2 + \|\nabla^k \bpp f\|_{L^2}^2\rt)\cr
&\ls \|f\|_{L^2_v(H^s)}^2,
\end{aligned}$$
i.e., the temporal energy functional $\mathcal{E}_0(f(t))$ is bounded by $\|f(t)\|_{L^2_v(H^s)}^2$ for all $t \in [0,T]$.
\end{remark}
%
%
%
%
\section{Proof of Theorem \ref{thm_main1}}\label{sec_ext}

%
%
%
%

\subsection{Existence and uniqueness}\label{ssec_ext}In this subsection, we provide the details of proof for the part of global existence and uniqueness of classical solutions in Theorem \ref{thm_main1}.

We first combine the estimates in Lemmas \ref{lem_energy1} and \ref{lem_energy3} to get
\[
\frac{d}{dt} \lt( \nu_1 \|f\|_{L^2_v(H^s)}^2 + \mathcal{E}_0(f)\rt) + (C_1 \nu_1 - C)\sum_{0 \leq k \leq s}\|\nabla^k \oper f\|_\mu^2 + (C_3 - C\epsilon_1 \nu_1)\|\nabla(a,b)\|_{H^{s-1}}^2 \leq 0,
\]
for some large positive constant $\nu_1 \geq 1$. Note that for some $\nu_1 > 0$ large enough it is clear to get $\nu_1 \|f\|_{L^2_v(H^s)}^2 + \mathcal{E}_0(f) \approx \|f\|_{L^2_v(H^s)}^2$ in the sense that there exists a positive constant $C > 0$ such that
\[
\frac{1}{C}\|f\|_{L^2_v(H^s)}^2 \leq \nu_1 \|f\|_{L^2_v(H^s)}^2 + \mathcal{E}_0(f) \leq C\|f\|_{L^2_v(H^s)}^2,
\]
due to Remark \ref{rmk_energy3}. It also follows from the linear combination of \eqref{energy2} over $1 \leq l \leq s$ that
$$\begin{aligned}
&\frac{d}{dt}\sum_{1 \leq l \leq s}C^l \sum_{0 \leq k \leq s-l}\|\nabla^k \nabla^l_v \oper f\|_{L^2}^2 + C_4\sum_{1 \leq l \leq s}\sum_{0 \leq k \leq s - l}\|\nabla^k \nabla^l_v \oper f\|_\mu^2\cr
&\quad \leq C\epsilon_1\lt( \sum_{0 \leq k + l \leq s} \|\nabla^k\nabla^l_v \oper f\|_\mu^2 + \|\nabla (a,b)\|_{H^{s-1}}^2\rt) \cr
&\qquad + C\sum_{0 \leq k \leq s}\|\nabla^k \oper f\|_\mu^2 + C\|\nabla (a,b)\|_{H^{s-1}}^2,
\end{aligned}$$
for some positive constants $C, C_4, C^l > 0, ~l = 1,\cdots, s$. We now set a total energy functional ${\mathcal{E}}(f)$ and a dissipation rate ${\mathcal{D}}(f)$:
\begin{align}\label{not_ed}
\begin{aligned}
{\mathcal{E}}(f) &:= \nu_2\lt(\nu_1 \|f\|_{L^2_v(H^s)}^2 + \mathcal{E}_0(f)\rt) + \sum_{1 \leq l \leq s}C^l \sum_{0 \leq k \leq s-l}\|\nabla^k \nabla^l_v \oper f\|_{L^2}^2,\cr
{\mathcal{D}}(f) &:= \sum_{0 \leq k + l \leq s} \|\nabla^k\nabla^l_v \oper f\|_\mu^2 + \|\nabla (a,b)\|_{H^{s-1}}^2,
\end{aligned}
\end{align}
where $\nu_2 \geq 1$ is a sufficiently large positive constant. 
Then we obtain
\[
\frac{d}{dt}{\mathcal{E}}(f(t)) + C_5{\mathcal{D}}(f(t)) \leq 0,
\]
where $C_5 > 0$ is a positive constant independent of $T$ and $f_0$, and this yields
\[
\sup_{0 \leq t \leq T}\lt\{{\mathcal{E}}(f(t)) + C_5\int_0^t {\mathcal{D}}(f(\tau))\,d\tau\rt\} \leq {\mathcal{E}}(f_0).
\]
On the other hand, since $\|\nabla^k \nabla^l_v \bpp f\|_{L^2} \approx \|\nabla^k f\|_{L^2}$, we find
\begin{align}\label{eqv_f}
\begin{aligned}
\|f\|_{H^s} &= \sum_{0 \leq k + l \leq s}\|\nabla^k \nabla^l_v \lt( \bpp f + \oper f\rt)\|_{L^2}\cr
& \approx \|f\|_{L^2} + \sum_{1 \leq k \leq s}\|\nabla^k f\|_{L^2} + \sum_{1 \leq l \leq s} \sum_{0 \leq k \leq s - l}\|\nabla^k \nabla^l_v \oper f\|_{L^2},
\end{aligned}
\end{align}
and this implies $\mathcal{E}(f) \approx \|f\|_{H^s}^2$. Hence we have
\bq\label{est_f}
\sup_{0 \leq t \leq T}\lt\{ \|f(t)\|_{H^s}^2 + C\int_0^t {\mathcal{D}}(f(\tau))\,d\tau\rt\} \leq C_0\|f_0\|_{H^s}^2,
\eq
where $C, C_0 > 0$ are positive constants independent of $T$ and $f_0$. We now 
choose a positive constant 
\[
L =\min \{\epsilon_0, \epsilon_1\},
\]
where $\epsilon_0$ and $\epsilon_1$ are appeared in Theorem \ref{thm_loc} and in Section \ref{sec_apri}, respectively. We also choose the initial data $f_0$ satisfying
\[
M + \sqrt{M}f_0 \geq 0 \quad \mbox{and} \quad \|f_0\|_{H^s} \leq \frac{L}{2\sqrt{1 + C_0}}.
\]
Define the lifespan of solutions to the system \eqref{main_eq} by
\[
T := \sup\lt\{t : \sup_{0 \leq \tau \leq t}\|f(\tau)\|_{H^s}^2 \rt\}.
\]
Since 
\[
\|f_0\|_{H^s} \leq \frac{L}{2\sqrt{1 + C_0}} \leq \frac L2 \leq \epsilon_0,
\]
it follows from Theorem \ref{thm_loc} and the continuation argument that 
the lifespan $T > 0$ is positive. If $T < + \infty$, we can deduce from the definition of $T$ that
\bq\label{contra}
\sup_{0 \leq \tau \leq T}\|f(\tau)\|_{H^s} = L.
\eq
On the other hand, it follows from \eqref{est_f} that
\[
\sup_{0 \leq \tau \leq T}\|f(\tau)\|_{H^s} \leq \sqrt{C_0}\|f_0\|_{H^s} \leq \frac{L\sqrt{C_0}}{2\sqrt{1 + C_0}} \leq \frac L2, 
\]
and this is a contraction to \eqref{contra}. Therefore $T = +\infty$, and this concludes that the local solution $f$ obtained in Theorem \ref{thm_loc} can be extended to the infinite time. 
%
%
%
%

\subsection{Large-time behavior}
In this part, we show the time-decay estimate of solutions obtained in Section \ref{ssec_ext}, and complete the proof of Theorem \ref{thm_main1}. We also remark the exponential decay rate of convergence when the spatial periodic domain is considered.

We first show the estimate of time decay for the solutions $\|f\|_{L^2}$ in terms of the total energy function $\mathcal{E}(f)$ given in \eqref{not_ed}.
\begin{lemma}If $\|f_0\|_{L^2_v(L^1)}$ is bounded, we have
\begin{align}\label{est_lem_lt}
\begin{aligned}
\|f(t)\|_{L^2}^2 &\leq C\lt({\mathcal{E}}(f_0) + \|f_0\|_{L^2_v(L^1)} \rt)(1 + t)^{-d/2} \cr
&\quad + C\int_0^t (1 + t - \tau)^{-d/2}{\mathcal{E}}(f(\tau))^2\,d\tau + C\lt( \int_0^t (1+t-\tau)^{-d/4}{\mathcal{E}}(f(\tau))\,d\tau\rt)^2,
\end{aligned}
\end{align}
for $t \geq 0$.
\end{lemma}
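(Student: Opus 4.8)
The plan is to use the Duhamel representation of the perturbation equation \eqref{main_eq} based on the semigroup $e^{\bb t}$, and then apply the decay estimates of Proposition \ref{prop_hypo} with $q=1$, $d\geq 3$, and $k=l=0$, so that $\sigma_{d,1,0}=d/4$. First I would rewrite \eqref{main_eq} in the form
\[
\pa_t f = \bb f + b\cdot v\sqrt{M} + h, \qquad h := -\,u_F\cdot\nabla_v f + \Gamma(f,f) - b\cdot v\sqrt M + \tfrac14(|v|^2-2d)\,\text{(irrelevant terms cancel)},
\]
more precisely $h = -(u_F - b)\cdot v\sqrt M + \tfrac12 u_F\cdot v f - u_F\cdot\nabla_v f$, so that the linear ``$b\cdot v\sqrt M$'' piece is peeled off exactly as in \eqref{hyp_sys}. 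The key structural point to check is that this $h$ satisfies the orthogonality conditions \eqref{condi_h}, i.e.\ $\lal h,\sqrt M\ral = \lal h, v\sqrt M\ral = 0$; this should follow from the definition of $u_F = b/(1+a)$ together with the identity $\lal vf,f\ral$-type computations already used in Lemma \ref{lem_energy1}, because the nonlinear source is built precisely to be microscopic. Then Duhamel gives
\[
f(t) = e^{\bb t}f_0 + \int_0^t e^{\bb(t-\tau)}h(\tau)\,d\tau,
\]
and taking $L^2$ norms, using $(a+b)^2 \le 2a^2+2b^2$,
\[
\|f(t)\|_{L^2}^2 \ls \|e^{\bb t}f_0\|_{L^2}^2 + \Big\|\int_0^t e^{\bb(t-\tau)}h(\tau)\,d\tau\Big\|_{L^2}^2.
\]

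The first term is handled directly by Proposition \ref{prop_hypo}(i) with $k=l=0$, $q=1$: it is bounded by $C(1+t)^{-d/2}(\|f_0\|_{L^2_v(L^1)} + \|f_0\|_{L^2})^2 \ls C(1+t)^{-d/2}(\mathcal E(f_0) + \|f_0\|_{L^2_v(L^1)})$, using $\mathcal E(f)\approx\|f\|_{H^s}^2 \gtrsim \|f\|_{L^2}^2$. For the Duhamel integral I would split $h = h_1 + h_2$ where $h_2 := -u_F\cdot\nabla_v\{\bi-\bpp\}f + \tfrac12 u_F\cdot v\{\bi-\bpp\}f$ collects the genuinely quadratic-in-microscopic pieces and $h_1$ collects the remaining terms (those involving $\bpp f$, which are quadratic in $(a,b)$ after using $u_F=b/(1+a)$ and $u_F-b = -ab/(1+a)$). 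Apply Proposition \ref{prop_hypo}(ii) with $q=1$, so the weighted-$L^1$-in-$x$ norm of $\mu^{-1/2}h$ is estimated; by Cauchy--Schwarz in $v$ and Hölder/Sobolev in $x$, each term of $\mu^{-1/2}h$ has $\|\cdot\|_{L^2_v(L^1)}$ controlled by a product of two factors each bounded by $\|\nabla(a,b)\|_{H^{s-1}} + \big(\sum_{k+l\le s}\|\nabla^k\nabla^l_v\oper f\|_\mu^2\big)^{1/2} \ls \mathcal D(f)^{1/2} \ls \mathcal E(f)$ (the last using the energy equivalence and that $\mathcal E$ is small, hence $\mathcal E\gtrsim\mathcal E^2$... actually one keeps it as $\mathcal E(f)$). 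Meanwhile the $\|\mu^{-1/2}\nabla^0 h\|_{L^2}$ factor is bounded by $\mathcal E(f)^2$ (product of two small energy quantities, no integrability gain needed). Plugging into Proposition \ref{prop_hypo}(ii):
\[
\Big\|\int_0^t e^{\bb(t-\tau)}h(\tau)\,d\tau\Big\|_{L^2}^2 \ls \int_0^t (1+t-\tau)^{-d/2}\Big(\mathcal E(f(\tau))^4 + \mathcal E(f(\tau))^2\Big)\,d\tau,
\]
and one rewrites the $\mathcal E^2$-inside-the-integral contribution via Cauchy--Schwarz (splitting $(1+t-\tau)^{-d/2} = (1+t-\tau)^{-d/4}(1+t-\tau)^{-d/4}$) to produce the squared-integral term $\big(\int_0^t(1+t-\tau)^{-d/4}\mathcal E(f(\tau))\,d\tau\big)^2$ in \eqref{est_lem_lt}, keeping the remaining genuinely-quartic piece as $\int_0^t(1+t-\tau)^{-d/2}\mathcal E(f(\tau))^2\,d\tau$ after absorbing one power of small $\mathcal E$.

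The main obstacle I anticipate is the bookkeeping for $h$: verifying carefully that every term of the nonlinearity either lies in $\mathcal N^\perp$ in $v$ (so that \eqref{condi_h} holds and Proposition \ref{prop_hypo}(ii) applies) or can be moved into the ``$b\cdot v\sqrt M$'' linear term, and then estimating $\|\mu^{-1/2}h\|_{L^2_v(L^1)}$ so that the $L^1_x$ norm genuinely factors into two $L^2_x$-type pieces each controlled by $\mathcal D(f)^{1/2}$ — this is where Lemma \ref{lem_us} (the bounds on $b/(1+a)$ and $b\otimes b/(1+a)$) and the Sobolev inequalities of Lemma \ref{lem_sob} do the real work, and one must be slightly careful that the $x$-derivative count never exceeds $s$ and the $v$-derivative count never exceeds what the $\|\cdot\|_\mu$-dissipation controls. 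Once $h$ is correctly decomposed, the remaining steps are routine applications of Proposition \ref{prop_hypo} and Cauchy--Schwarz.
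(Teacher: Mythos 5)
Your overall plan (Duhamel with $e^{\bb t}$, then Proposition~\ref{prop_hypo} with $q=1$, $k=l=0$, so $\sigma_{d,1,0}=d/4$) is the right one and matches the paper's, but the execution differs from the paper's proof in one important place and contains one step that is incorrect as written.

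The paper writes $f(t)=e^{\bb t}f_0+\int_0^te^{\bb(t-\tau)}\bigl(H_1(\tau)+H_2(\tau)\bigr)\,d\tau$ with $H_1:=\frac12 u_F\cdot v\oper f-u_F\cdot\nabla_v\oper f$ and $H_2:=(u_F-b)\cdot v\sqrt M+\frac12 u_F\cdot v\bpp f-u_F\cdot\nabla_v\bpp f$, and then treats the two pieces \emph{asymmetrically}: Proposition~\ref{prop_hypo}(ii) is applied only to $H_1$ (which visibly satisfies~\eqref{condi_h} since it is built from $\oper f$), while $H_2$ is handled by Minkowski's inequality followed by Proposition~\ref{prop_hypo}(i) applied inside the integral. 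It is precisely the Minkowski step for $H_2$ that produces the $\bigl(\int_0^t(1+t-\tau)^{-d/4}\mathcal E(f(\tau))\,d\tau\bigr)^2$ term in~\eqref{est_lem_lt}: one bounds $\bigl\|\int_0^te^{\bb(t-\tau)}H_2\,d\tau\bigr\|_{L^2}\le\int_0^t\|e^{\bb(t-\tau)}H_2(\tau)\|_{L^2}\,d\tau$, applies~(i) to each integrand, and squares the whole thing. By contrast, you propose to apply~(ii) to all of $h$. That is actually legitimate: despite your hedging, \emph{both} $H_1$ and $H_2$ satisfy $\lal\cdot,\sqrt M\ral=\lal\cdot,v\sqrt M\ral=0$ pointwise in $x$ — for $H_2$ one computes directly that $H_2=\sum_{i,j}(u_F)_ib_j(v_iv_j-\delta_{ij})\sqrt M$ using $(1+a)u_F=b$, and $(v_iv_j-\delta_{ij})\sqrt M\perp\mathcal N$ — so Proposition~\ref{prop_hypo}(ii) applies to the full $h$. (Incidentally your $h$ has a sign error: it should be $h=+(u_F-b)\cdot v\sqrt M+\frac12u_F\cdot vf-u_F\cdot\nabla_vf$; with the minus sign the projection onto $v\sqrt M$ does \emph{not} vanish.) Your route gives the sharper estimate $\|f(t)\|_{L^2}^2\ls(1+t)^{-d/2}N_0+\int_0^t(1+t-\tau)^{-d/2}\mathcal E(f(\tau))^2\,d\tau$, which implies~\eqref{est_lem_lt} since the squared-integral term is nonnegative. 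That is a genuinely different and slightly leaner argument than the paper's, at the small extra cost of checking $H_2\perp\mathcal N$.

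Two things do need correcting. First, your final step — "rewrite the $\mathcal E^2$-inside-the-integral contribution via Cauchy--Schwarz" to produce $\bigl(\int_0^t(1+t-\tau)^{-d/4}\mathcal E\,d\tau\bigr)^2$ — goes the wrong way: Cauchy--Schwarz yields $\bigl(\int ab\bigr)^2\le\int a^2\int b^2$, and there is no general inequality bounding $\int(1+t-\tau)^{-d/2}\mathcal E^2\,d\tau$ \emph{by} $\bigl(\int(1+t-\tau)^{-d/4}\mathcal E\,d\tau\bigr)^2$. This step is wrong but, as noted, also unnecessary. Second, the power counting is off: $h$ is quadratic in $f$, so both $\|\mu^{-1/2}h\|_{L^2_v(L^1)}$ and $\|\mu^{-1/2}h\|_{L^2}$ are $\ls\|f\|_{H^s}^2\approx\mathcal E(f)$ (the $L^1_x$ norm factors via Cauchy--Schwarz in $x$ into two $L^2_x$ pieces, as you say); the $\mathcal E^4$ in your displayed bound should be $\mathcal E^2$, and the intermediate appeal to $\mathcal D(f)^{1/2}\ls\mathcal E(f)$ is not correct ($\mathcal D$ and $\mathcal E$ are not comparable) and should be replaced by direct use of Lemmas~\ref{lem_sob} and~\ref{lem_us} together with $\mathcal E(f)\approx\|f\|_{H^s}^2$.
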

\begin{proof}We rewrite the equation \eqref{main_eq} in the mild form: 
\[
f(t) = e^{\bb t}f_0 + \int_0^t e^{\bb(t-\tau)}\lt(H_1(f(\tau)) + H_2(f(\tau))\rt)\,d\tau,
\]
where
$$\begin{aligned}
H_1(f) &:= \frac12 u_F \cdot v \oper f - u_F \cdot \nabla_v \oper f,\cr H_2(f) &:= (u_F - b)\cdot v \sqrt{M} + \frac12 u_F \cdot v \bpp f - u_F \cdot \nabla_v \bpp f.
\end{aligned}$$
Then it follows from Proposition \ref{prop_hypo} that
\begin{align}\label{est_lt1}
\begin{aligned}
\|f(t)\|_{L^2}^2 &\leq C\lt({\mathcal{E}}(f_0) + \|f_0\|_{L^2_v(L^1)}^2\rt)(1 + t)^{-d/2} \cr
&\quad + C\int_0^t (1 + t - \tau)^{-d/2}\lt(\|\mu^{-1/2}H_1(f(\tau))\|_{L^2_v(L^1)}^2 + \|\mu^{-1/2}H_1(f(\tau))\|_{L^2}^2 \rt)d\tau\cr
&\quad + C\lt( \int_0^t (1 + t - \tau)^{-d/4}\lt( \|H_2(f(\tau))\|_{L^2_v(L^1)} + \|H_2(f(\tau))\|_{L^2}\rt)d\tau\rt)^2,
\end{aligned}
\end{align}
since $H_1$ satisfies the condition \eqref{condi_h}.
Note that 
$$\begin{aligned}
\|(u_F - b)\cdot v\sqrt{M}\|_{L^2_v(L^1)} + \|(u_F - b)\cdot v\sqrt{M}\|_{L^2} &\leq C\|u_F - b\|_{L^1} + C\|u_F - b\|_{L^2}\cr
&\leq C\lt(1 + \|a\|_{L^2} \rt)\|b\|_{L^2}\cr
&\leq C{\mathcal{E}}(f).
\end{aligned}$$
This and together with similar estimates  as in \cite{DFT}, we obtain
\begin{align}\label{est_lt2}
\begin{aligned}
&\|\mu^{-1/2}H_1(f(\tau))\|_{L^2_v(L^1)}^2 + \|\mu^{-1/2}H_1(f(\tau))\|_{L^2}^2 \leq C{\mathcal{E}}(f(\tau))^2,\cr
&\|H_2(f(\tau))\|_{L^2_v(L^1)} + \|H_2(f(\tau))\|_{L^2} \leq C{\mathcal{E}}(f(\tau)).
\end{aligned}
\end{align}
Finally, we combine \eqref{est_lt1} and \eqref{est_lt2} to conclude the desired result.
\end{proof}
We set
\[
\mathcal{E}_\infty(t) := \sup_{0 \leq \tau \leq t} (1 + \tau)^{d/2}{\mathcal{E}}(f(\tau)).
\]
Then it follows from \eqref{est_lem_lt} that
$$\begin{aligned}
\|f(t)\|_{L^2}^2 &\leq C\lt({\mathcal{E}}(f_0) + \|f_0\|_{L^2_v(L^1)}^2 \rt)\lt( 1 + t\rt)^{-d/2} + C\|f_0\|_{H^s}^2\int_0^t (1 + t - \tau)^{-d/2} {\mathcal{E}}(f(\tau))\,d\tau\cr
&\quad + C\|f_0\|_{H^s}^{1/3}\lt(\int_0^t (1 + t - \tau)^{-d/4} {\mathcal{E}}(f(\tau))^{11/12}\,d\tau\rt)^2\cr
&\quad =:\sum_{i=1}^3 I_i,
\end{aligned}$$
due to ${\mathcal{E}}(f) \leq {\mathcal{E}}(f_0) \leq C\|f_0\|_{H^s}^2$. We also use Lemma \ref{lem_ei} to deduce
\[
I_2 \leq C\|f_0\|_{H^s}^2\mathcal{E}_\infty(t)\int_0^t (1 + t - \tau)^{-d/2}(1 + \tau)^{-d/2}\,d\tau \leq C\|f_0\|_{H^s}^2\mathcal{E}_\infty(t)(1 + t)^{-d/2}.
\]
Similarly, we use the fact $11d /24 > \max\{1, d/4\}$ to obtain
\[
I_3 \leq C\|f_0\|_{H^s}^{1/3}\mathcal{E}_\infty(t)^{11/6}\lt(\int_0^t (1 + t - \tau)^{-d/4}(1+\tau)^{-11d/24}\,d\tau\rt)^2 \leq C\|f_0\|_{H^s}^{1/3}\mathcal{E}_\infty(t)^{11/6}(1 + t)^{-d/2}.
\]
Thus we have
\[
\|f(t)\|_{L^2}^2 \leq C\lt(N_0 + \|f_0\|_{H^s}^2\mathcal{E}_\infty(t) + \|f_0\|_{H^s}^{1/3}\mathcal{E}_\infty(t)^{11/6}\rt)(1 + t)^{-d/2},
\]
where $N_0 := \|f_0\|_{H^s}^2 + \|f_0\|_{L^2_v(L^1)}^2 > 0$,
and this yields
\[
\mathcal{E}_\infty(t) \leq C\lt(N_0 + \|f_0\|_{H^s}\mathcal{E}_\infty(t) + \|f_0\|_{H^s}^{1/6}\mathcal{E}_\infty(t)^{11/6}\rt).
\]
Since $\|f_0\|_{H^s} \ll 1$, this concludes the uniform boundedness of $\mathcal{E}_\infty (t)$. Hence, we have the algebraic decay of the solutions to the equation \eqref{main_eq}.
\begin{remark}\label{rmk_lt}If we consider the periodic spatial domain $\T^d$, we find the following differential inequality using similar arguments as in Section \ref{sec_apri}.
\[
\frac{d}{dt}\mathcal{E}(f(t)) + C\mathcal{D}(f(t)) \leq 0,
\]
where $\mathcal{E}(f)$ and $\mathcal{D}(f)$ are given in \eqref{not_ed}. It also follows from \eqref{eqv_f} that $\mathcal{E}(f) \approx \|f\|_{H^s}^2$. We now further assume 
\[
\int_{\T^d} a_0(x) \,dx = 0 \quad \mbox{and} \quad \int_{\T^d} b_0(x)\,dx = 0,
\]
then this implies 
\[
\int_{\T^d} a(x,t) \,dx = 0 \quad \mbox{and} \quad \int_{\T^d} b(x,t)\,dx = 0, \quad \mbox{for} \quad t \geq 0,
\]
due to the conservations of mass and momentum. Thus, by using the Poincar\'e inequality, we get
\[
\|(a,b)\|_{L^2} \leq C\|\nabla (a,b)\|_{L^2}.
\]
This deduces
\[
\sum_{0 \leq k + l \leq s}\|\nabla^k \nabla^l_v \bpp f\|_{L^2} \ls \sum_{0 \leq k \leq s} \|\nabla^k (a,b)\|_{L^2} \ls \sum_{1 \leq k \leq s}\|\nabla^k(a,b)\|_{L^2} \ls \|\nabla(a,b)\|_{H^{s-1}}.
\]
Hence we have
$$\begin{aligned}
\|f\|_{H^s} &\leq \sum_{0 \leq k + l \leq s}\|\nabla^k \nabla^l_v \bpp f\|_{L^2}+ \sum_{0 \leq k + l \leq s}\|\nabla^k \nabla^l_v \oper f\|_{L^2}\cr
&\ls \sqrt{\mathcal{D}(f)},
\end{aligned}$$
i.e., $\mathcal{E}(f) \ls \mathcal{D}(f)$. This concludes 
\[
\frac{d}{dt}\mathcal{E}(f(t)) + C\mathcal{E}(f(t)) \leq 0,
\]
and 
\[
\|f(t)\|_{H^s}^2 \leq C \mathcal{E}(f(t)) \leq C\mathcal{E}(f_0)e^{-Ct} \leq C\|f_0\|_{H^s}^2 e^{-Ct} \quad \mbox{for} \quad t \geq 0.
\]
\end{remark}
%
%
%
%

\appendix

\section{Proof of Proposition \ref{prop_hypo}}\label{app_hypo}
We consider the following Cauchy problem:
\bq\label{app_eq}
\pa_t f + v \cdot \nabla f - b \cdot v \sqrt{M} = \bl f + h,
\eq
where $h$ is a given function satisfying the conditions in Proposition \ref{prop_hypo}.
For an integrable function $g : \R^d \to \R$, we define its Fourier transform $\widehat g := \mathcal{F} g$ by
\[
\widehat{g}(k) = \mathcal{F} g(k) := \int_{\R^d} e^{-2\pi \mi x \cdot k} g(x)\,dx, 
\]
where $k \in \R^d$ and $\mi \in \sqrt{-1} \in \mathbb{C}$ is the imaginary unit. We denote the dot product $a \cdot \bar b$ for $a, b \in \mathbb{C}^d$ by $(a\,|\, b)$, where $\bar b$ is the complex conjugate of $b$. Then we first show the $L^2$-estimate of $\widehat f$ in the following lemma.
\begin{lemma} Let $f$ be a solution of the equation \eqref{app_eq} satisfying the conditions in Proposition \ref{prop_hypo}. Then we have
\[
\frac{\pa}{\pa t}\|\wh f\|_{L^2_v}^2 + \lambda|\oper \wh f|_\mu^2 \leq C\|\mu^{-1/2} \wh h\|_{L^2_v}^2.
\]
\end{lemma}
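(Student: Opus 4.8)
The plan is to pass to Fourier variables in $x$ and run the natural $L^2_v$ energy estimate against $\overline{\wh f}$, letting the coercivity of $\bl$ do the essential work. First I apply $\mathcal F$ in $x$ to \eqref{app_eq}: since $\mathcal F(v\cdot\nabla f)(k) = 2\pi\mi\,(v\cdot k)\,\wh f(k)$ and the velocity-moment operators commute with $\mathcal F$, the transformed equation reads
\[
\pa_t \wh f + 2\pi\mi\,(v\cdot k)\,\wh f - b^{\wh f}\cdot v\sqrt M = \bl\wh f + \wh h,
\]
with $b^{\wh f} = \lal v\sqrt M,\wh f\ral$, and transforming the conditions \eqref{condi_h} in $x$ gives $\lal\wh h,\sqrt M\ral = \lal\wh h,v\sqrt M\ral = 0$ for every $k$, i.e.\ $\wh h = \oper\wh h\in\mathcal N^\perp$.

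Next I take the $L^2_v$-inner product with $\wh f$ and keep real parts, so the left-hand side is $\tfrac12\frac{\pa}{\pa t}\|\wh f\|_{L^2_v}^2$. The transport term is skew-symmetric: $\mathrm{Re}\,\lal 2\pi\mi(v\cdot k)\wh f,\wh f\ral = 0$ since $v\cdot k$ is real. The forcing term gives $\mathrm{Re}\,\lal b^{\wh f}\cdot v\sqrt M,\wh f\ral = |b^{\wh f}|^2$, while the coercivity estimate of $\bl$ --- applied to the real and imaginary parts of $\wh f$ separately, which is legitimate because $\bl$ is real and self-adjoint on $L^2_v$ --- gives $\mathrm{Re}\,\lal\bl\wh f,\wh f\ral \le -\lambda|\oper\wh f|_\mu^2 - |b^{\wh f}|^2$. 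These two $|b^{\wh f}|^2$ contributions cancel exactly, which is the structural reason no $|b^{\wh f}|^2$ appears on the right, leaving $-\lambda|\oper\wh f|_\mu^2$. For the source, $\wh h\in\mathcal N^\perp$ gives $\lal\wh h,\wh f\ral = \lal\wh h,\oper\wh f\ral$, hence $|\mathrm{Re}\,\lal\wh h,\wh f\ral|\le\|\mu^{-1/2}\wh h\|_{L^2_v}\,|\oper\wh f|_\mu$. Collecting everything and multiplying by $2$,
\[
\frac{\pa}{\pa t}\|\wh f\|_{L^2_v}^2 \le -2\lambda|\oper\wh f|_\mu^2 + 2\|\mu^{-1/2}\wh h\|_{L^2_v}\,|\oper\wh f|_\mu,
\]
and Young's inequality $2\|\mu^{-1/2}\wh h\|_{L^2_v}|\oper\wh f|_\mu\le\lambda|\oper\wh f|_\mu^2+\lambda^{-1}\|\mu^{-1/2}\wh h\|_{L^2_v}^2$ absorbs one copy of the dissipation, giving the claim with $C=1/\lambda$.

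There is no serious obstacle here; the only care needed is bookkeeping --- tracking the complex conjugates so the transport term drops out, checking that the coercivity estimate survives complexification, and observing that the cancellation of the two $|b^{\wh f}|^2$ terms together with the factor $2$ from differentiating $\|\wh f\|_{L^2_v}^2$ is precisely what makes the surviving dissipation coefficient the full $\lambda$.
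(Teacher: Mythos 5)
Your proof is correct and follows essentially the same route as the paper: Fourier transform in $x$, $L^2_v$ energy estimate against $\wh f$, skew-symmetry of the transport term, coercivity of $\bl$ extended to complex-valued functions, orthogonality of $\wh h$ to $\mathcal{N}$ to land the source on $\oper\wh f$, and Young's inequality. The only cosmetic difference is that the paper first uses the identity $\bl\bpp\wh f = -\wh b\cdot v\sqrt M$ to rewrite the right-hand side as $\bl\oper\wh f + \wh h$ (so the $\wh b$ terms never appear in the energy identity), whereas you carry both $+|b^{\wh f}|^2$ (from the forcing) and $-|b^{\wh f}|^2$ (from the coercivity estimate (ii)) and cancel them at the inner-product level; the two are equivalent.
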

\begin{proof}The Fourier transform of \eqref{app_eq} gives
\[
\pa_t \wh f + \mi v \cdot k \wh f - \wh b \cdot v \sqrt{M} = \bl \wh f + \wh h = \bl \oper \wh f - \wh b \cdot v \sqrt{M} + \wh h.
\]
Then we use $\{ \bi - \bpp_0\}\oper = \oper $ to get
\[
\frac12 \frac{\pa}{\pa t}\|\wh f\|_{L^2_v}^2 + \lambda|\oper \wh f|_\mu^2 \leq |(\wh h\,|\,\wh f)|.
\]
On the other hand, we obtain
$$\begin{aligned}
|(\wh h\,|\,\wh f)| &= |(\wh h\,|\, \oper \wh f)|\cr
&\leq \delta\|\mu^{1/2}\oper \wh f\|_{L^2_v}^2 + \frac{1}{4\delta}\|\mu^{-1/2}\wh f\|_{L^2_v}^2\cr
&\leq C\delta|\oper \wh f|_{\mu}^2 + \frac{1}{4\delta}\|\mu^{-1/2}\wh f\|_{L^2_v}^2,
\end{aligned}$$
for any positive constant $\delta > 0$. We now choose the $\delta > 0$ such that $C\delta < \lambda/2$ to complete the proof.
\end{proof}
Similar as in Section \ref{sec_mmd}, we derive the macroscopic balance laws:
\begin{align}\label{est_a_1}
\begin{aligned}
&\pa_t a + \nabla \cdot b = 0,\cr
&\pa_t b_i + \pa_i a + \sum_{j=1}^d \pa_j A_{ij}(\{ \bi - \bpp\}f) = 0,\cr
&\pa_t A_{ij}(\{ \bi - \bpp\}f) + \pa_i b_j + \pa_j b_i = A_{ij}(\ell + h),
\end{aligned}
\end{align}
where 
\[
\ell = - v \cdot \nabla \{\bi - \bpp \}f + \bl\{ \bi - \bpp\}f.
\]
Then it follows from \eqref{est_a_1} that for $1 \leq m \leq d$
\begin{align}\label{est_a_2}
\begin{aligned}
-\Delta b_m &- \pa_t \lt( \sum_{1 \leq j \leq d} \pa_j A_{jm}(\oper f) - \frac12 \sum_{1 \leq j \leq m}\pa_m A_{jj}(\oper f)\rt)\cr
& = \frac12 \sum_{1 \leq j \leq d}\pa_mA_{jj}(\ell + h) - \sum_{1\leq j \leq d}\pa_j A_{jm}(\ell + h).
\end{aligned}
\end{align}
By taking the Fourier transform to \eqref{est_a_2}, we deduce
$$\begin{aligned}
|k|^2 \wh b_m &- \pa_t \lt( \sum_{1 \leq j \leq d} \mi k_j A_{jm}(\oper \wh f) - \frac12 \sum_{1 \leq j \leq m}\mi k_m A_{jj}(\oper \wh f)\rt)\cr
& = \frac12 \sum_{1 \leq j \leq d}\mi k_m A_{jj}(\wh \ell + \wh h) - \sum_{1\leq j \leq d}\mi k_j A_{jm}(\wh \ell + \wh h).
\end{aligned}$$
This yields 
$$\begin{aligned}
&\pa_t \lt( \sum_{1 \leq j \leq d} \mi k_j A_{jm}(\oper \wh f) - \frac12 \sum_{1 \leq j \leq m}\mi k_m A_{jj}(\oper \wh f) \,\Big|\,\wh b_m \rt) + |k|^2 |\wh b_m|^2\cr
&\quad = \lt( \frac12 \sum_{1 \leq j \leq d} \mi k_m A_{jj}(\wh \ell + \wh h) - \sum_{1 \leq j \leq d}\mi k_jA_{jm}(\wh \ell + \wh h) \,\Big|\, \wh b_m\rt)\cr
&\qquad + \lt(\sum_{1 \leq j \leq d} \mi k_j A_{jm}(\oper \wh f) - \frac12 \sum_{1 \leq j \leq m}\mi k_m A_{jj}(\oper \wh f) \,\Big|\, \pa_t \wh b_m \rt)\cr
&\quad =: I_1 + I_2,
\end{aligned}$$
where $I_1$ is estimated as follows.
$$\begin{aligned}
I_1 &\leq \frac12 |k|^2|\wh b_m|^2 + C\sum_{1 \leq i,j \leq d}\lt( |A_{ij}(\wh \ell)|^2 + |A_{ij}(\wh h)|^2\rt) \cr
&\leq \frac12 |k|^2|\wh b_m|^2 + C(1 + |k|^2)\|\oper \wh f\|_{L^2_v}^2 + C\|\mu^{-1/2}\wh h\|_{L^2_v}^2.
\end{aligned}$$
For the estimate of $I_2$, we use the following Fourier transform of $\eqref{est_a_1}_2$:
\bq\label{est_a_3}
\pa_t \wh b_m + \mi k_m \wh a + \sum_{1 \leq j \leq d}\mi k_j A_{mj}(\oper \wh f) = 0.
\eq
Then we deduce
$$\begin{aligned}
I_2 &= \lt(\sum_{1 \leq j \leq d} \mi k_j A_{jm}(\oper \wh f) - \frac12 \sum_{1 \leq j \leq m}\mi k_m A_{jj}(\oper \wh f) \,\Big|\, - \mi k_m \wh a - \sum_{1 \leq j \leq d}\mi k_j A_{mj}(\oper \wh f)  \rt)\cr
&\leq \delta|k|^2 |\wh a|^2 + C(1+|k|^2)\| \oper \wh f\|_{L^2_v}^2.
\end{aligned}$$
Thus we have
\begin{align}\label{est_a_e1}
\begin{aligned}
&\pa_t \lt( \sum_{1 \leq j \leq d} \mi k_j A_{jm}(\oper \wh f) - \frac12 \sum_{1 \leq j \leq m}\mi k_m A_{jj}(\oper \wh f) \,\Big|\,\wh b_m \rt) + |k|^2 |\wh b_m|^2\cr
&\quad \leq \delta|k|^2 |\wh a|^2 + C(1+|k|^2)\| \oper \wh f\|_{L^2_v}^2 + C\|\mu^{-1/2}\wh h\|_{L^2_v}^2.
\end{aligned}
\end{align}
We next obtain the dissipation of $|k|^2|\wh a|^2$. For this, we find from \eqref{est_a_3} that
\[
\lt(-\pa_t \mi k \cdot \wh b \,\big|\, \wh a\rt) + |k|^2|\wh a|^2 + \lt( \sum_{1 \leq i,j\leq d} k_ik_j A_{ij}(\oper \wh f) \,\Big|\, \wh a \rt) = 0.
\]
Moreover, we get
\[
\lt(-\pa_t \mi k \cdot \wh b \,\big|\, \wh a\rt) = \pa_t\lt( - \mi k \cdot \wh b \,\big|\, \wh a\rt) + \lt( \mi k \cdot \wh b \,\big|\, \pa_t \wh a\rt) = \pa_t\lt( - \mi k \cdot \wh b \,\big|\, \wh a\rt) - |k \cdot \wh b|^2.
\]
Using the above equation, we have
\bq\label{est_a_e2}
\pa_t \mbox{Re}\lt( - \mi k \cdot \wh b \,\big|\, \wh a\rt) + \frac12 |k|^2 |\wh a|^2 \leq |k|^2|\wh b|^2 + C|k|^2\|\oper \wh f\|_{L^2_v}^2.
\eq
Set
$$\begin{aligned}
E(\wh f) &:= 3\sum_{1 \leq m, j \leq d}\frac{\mi k_j}{1 + |k|^2}\lt(A_{jm}(\oper \wh f) \,\big| \, \wh b_m \rt) - \frac32 \sum_{1 \leq m,j \leq d}\frac{\mi k_m}{1 + |k|^2}\lt(A_{jj}(\oper \wh f) \,\big| \, \wh b_m \rt)\cr
&\quad - \frac{\mi k}{1 + |k|^2} \cdot \lt(\wh b \,\big| \, \wh a \rt).
\end{aligned}$$
Combining \eqref{est_a_e1} and \eqref{est_a_e2}, we find that $E(\wh f)$ satisfies
\[
\frac{\pa}{\pa t} \mbox{Re} E(\wh f) + \frac{|k|^2}{4(1 + |k|^2)} \lt( |\wh a|^2 + |\wh b|^2\rt) \leq  C\|\oper \wh f\|_{L^2_v}^2 + C\|\mu^{-1/2}\wh h\|_{L^2_v}^2.
\]
We next set 
\[
\widetilde{E}(\wh f) := \|\wh f\|_{L^2_v}^2 + \kappa \mbox{Re} E(\wh f),
\]
where $\kappa > 0$ is a positive constant which will determined later. Then we obtain
\[
\pa_t \widetilde{E}(\wh f) + \lt( \lambda - C\kappa\rt)\|\oper \wh f\|_{L^2_v}^2 + \frac{|k|^2}{4(1 + |k|^2)} \lt( |\wh a|^2 + |\wh b|^2\rt) \leq C\|\mu^{-1/2}\wh h\|_{L^2_v}^2.
\]
Since
\[
\|\oper \wh f\|_{L^2_v}^2 + \frac{|k|^2}{1 + |k|^2} \lt( |\wh a|^2 + |\wh b|^2\rt) \geq \frac{|k|^2}{1 + |k|^2}\lt(\|\oper \wh f\|_{L^2_v}^2 + |\wh a|^2 + |\wh b|^2\rt)
\]
and
\[
\widetilde{E}(\hat f) \leq C\lt(\|\oper \wh f\|_{L^2_v}^2 + |\wh a|^2 + |\wh b|^2\rt),
\]
we have 
\[
\frac{d}{dt}\widetilde{E}(\wh f) + \frac{C_0|k|^2}{1 + |k|^2}  \widetilde{E}(\wh f) \leq C\|\mu^{-1/2}\wh h\|_{L^2_v}^2,
\]
for some positive constant $C_0 > 0$.
This implies
\[
\widetilde{E}(\wh f(k,t)) \leq \widetilde{E}(\wh f(0,k))e^{-\frac{C_0|k|^2}{1+|k|^2}t} + C\int_0^t e^{-\frac{C_0|k|^2}{1 + |k|^2}(t-\tau)}\|\mu^{-1/2}\oper \wh f(\tau)\|_{L^2_v}^2\,d\tau.
\]
Let $h = 0$, then $f(t) = e^{\bb t}f_0$ is the solution to \eqref{app_eq}  and
$$\begin{aligned}
\|\nabla^\alpha e^{\bb t}f_0\|_{L^2}^2 &= \int_{\R^d} |k^{2\alpha}| |\widetilde{E}(\wh f(k,t))|\,dk\cr
&\leq \int_{\R^d} |k^{2\alpha}||e^{-\frac{C_0|k|^2}{1 + |k|^2}t}\|\wh f_0(k)\|_{L^2_v}^2\,dk\cr
&\leq \int_{|k| \leq 1}|k^{2(\alpha - \beta)}|e^{-\frac{C_0|k|^2}{1 + |k|^2}t}|k^{2\beta}|\|\wh f_0(k)\|_{L^2_v}^2\,dk + \int_{|k|\geq 1} e^{-\frac{C_0}{2}t}|k^{2\alpha}|\|\wh f_0(k)\|_{L^2_v}^2\,dk\cr
&\leq C(1 + t)^{-\frac{d}{q} + \frac{d - 2|\alpha - \beta|}{2}}\|\nabla^\beta f_0\|_{L^2_v(L^q)}^2 + Ce^{-\frac{C_0}{2}t}\|\nabla^k f_0\|_{L^2}^2,
\end{aligned}$$
where $k^\alpha = k_1^{\alpha_1}k_2^{\alpha_2}\cdots k_d^{\alpha_d}$. Here H$\ddot{\textrm{o}}$lder and Hausdorff-Young inequalities are used as in \cite{Kawa}. We conclude the desired result by using the similar techniques as the above.

%
%
%
%
 
\section*{Acknowledgments}

The author was supported by Engineering and Physical Sciences Research Council(EP/K008404/1). The author also acknowledges the support of the ERC-Starting Grant HDSPCONTR ``High-Dimensional Sparse Optimal Control''.

%
%
%
%


\begin{thebibliography}{10}
\bibitem{BDGM} L. Boudin, L. Desvillettes, C. Grandmont, and A. Moussa, Global existence of solutions for the coupled Vlasov and Navier-Stokes equations, Diff. Int. Eqns, 22, (2009), 1247--1271.
\bibitem{CCK} J. A. Carrillo, Y.-P. Choi, and T. Karper, On the analysis of a coupled kinetic-fluid model with local alignment forces, Ann. I. H. Poincar\'e - AN, 33, (2016), 273--307.
\bibitem{CCTT} J. A. Carrillo, Y.-P. Choi, E. Tadmor, and C. Tan, Critical thresholds in 1D Euler equations with nonlocal forces, Math. Mod. Meth. Appl. Sci., 26, (2016), 185--206.
\bibitem{CDM} J. A. Carrillo, R. Duan, and A. Moussa, Global classical solutions close to the equilibrium to the Vlasov-Fokker-Planck-Euler system, Kinetic and Related Models, 4, (2011), 227--258.
\bibitem{CFRT} J. A. Carrillo, M. Fornasier, J. Rosado, and G. Toscani, Asymptotic flocking dynamics for the kinetic Cucker-Smale model, SIAM J. Math. Anal., 42, (2010), 218--236.
\bibitem{Choi} Y.-P. Choi, Compressible Euler equations interacting with incompressible flow, Kinetic and Related Models, 8, (2015), 335--358.
\bibitem{CHL} Y.-P. Choi, S.-Y. Ha, and Z. Li, Emergent dynamics of the Cucker-Smale flocking model and its variants, preprint (2016).
\bibitem{CS} F. Cucker and S. Smale, Emergent behavior in flocks, IEEE Trans. Auto. Control, 52, (2007), 852--862.
\bibitem{Duan1} R. Duan, On the Cauchy problem for the Boltzmann equation in the whole space: Global existence and uniform stability in $L^2_\xi(H^N_x)$, J. Diff. Eqns., 244, (2007), 3204--3234.
\bibitem{Duan} R. Duan, Hypocoercivity of linear degenerately dissipative kinetic equations, Nonlinearity, 24, (2011), 2165--2189.
\bibitem{DFT} R. Duan, M. Fornasier, and G. Toscani, A kinetic flocking model with diffusion, Comm. Math. Phys., 300, (2010), 95--145.
\bibitem{Frank} T. D. Frank, Nonlinear Fokker-Planck Equations: Fundamentals and Applications, Springer-Verlag Berlin Heidelberg, (2005).
\bibitem{Guo} Y. Guo, The Boltzmann equation in the whole space, Indiana Univ. Math. J., 53, (2004), 1081--1094. 
\bibitem{HT} S.-Y. Ha and E. Tadmor, From particle to kinetic and hydrodynamic descriptions of flocking, Kinetic and Related Models, 1, (2008), 415--435.
\bibitem{KMT} T. K. Karper, A. Mellet, and K. Trivisa, Existence of weak solutions to kinetic flocking models, SIAM J. Math. Anal., 45, (2013), 215--243.
\bibitem{KMT2} T. K. Karper, A. Mellet, and K. Trivisa, On strong local alignment in the kinetic Cucker-Smale model, Hyperbolic Conservation Laws and Related Analysis with Applications Springer Proceedings in Math. Stat. 49, (2014), 227--242. 
\bibitem{KMT3} T. Karper, A. Mellet, and K. Trivisa, Hydrodynamic limit of the kinetic Cucker-Smale flocking model, Math. Mod. Meth. Appl. Sci., 25, (2015), 131--163.
\bibitem{Kawa} S. Kawashima, Systems of a hyperbolic-parabolic composite type with applications to the equations of magnetohydrodynamics, Thesis Kyoto University, 1983.
\bibitem{MT} S. Motsch and E. Tadmor, A new model for self-organized dynamics and its flocking behavior, J. Stat. Phys., 144, (2011), 923--947.
\bibitem{Vill} C. Villani, A review of mathematical topics in collisional kinetic theory In:Handbook of mathematical fluid dynamics, Vol. I, Amsterdam: North-Holland, (2002), 71--305.
\end{thebibliography}
\end{document}